\numberwithin{equation}{section}
\DeclareMathOperator{\supph}{ssupp}
\newcommand{\eg}{\textrm{e.g.}}
\newcommand{\ie}{\textrm{i.e.}}
\newcommand{\cf}{\textrm{cf.}}
\newcommand{\tot}{\mathrm{tot}}
\newcommand{\id}{\mathrm{id}}
\newcommand{\loc}{\mathrm{loc}}
\newcommand{\Id}{\mathop{\mathrm{Id}}}
\newcommand{\spec}{\mathop{\mathrm{Spec}}}
\newcommand{\specgr}{\mathop{\mathrm{Spec^h}}}
\newcommand{\Spec}{\mathop{\mathrm{Spec}}}
\newcommand{\spc}{\mathop{\mathrm{Spc}}}
\newcommand{\Spc}{\mathop{\mathrm{Spc}}}
\newcommand{\Proj}{\mathop{\mathrm{Proj}}}
\newcommand{\Pic}{\mathop{\mathrm{Pic}}}
\DeclareMathOperator{\supp}{supp}
\newcommand{\Hom}{\mathrm{Hom}}
\newcommand{\Mor}{\mathop{\mathrm{Mor}}}
\newcommand{\End}{\mathrm{End}}
\newcommand{\Ann}{\mathrm{Ann}}
\newcommand{\op}{\mathrm{op}}
\newcommand{\cone}{\mathop{\mathrm{cone}}}
\newcommand{\colim}{\mathop{\mathrm{colim}}}
\newcommand{\unit}{\mathbf{1}} 
\newcommand{\obj}{\mathop{\mathrm{obj}}}
\newcommand{\Ker}{\mathop{\mathsf{Ker}}}
\newcommand{\perf}{\mathrm{perf}}
\newcommand{\Gr}{\mathrm{Gr}}
\newcommand{\Mod}{\mathrm{Mod}}
\newcommand{\GrMod}{\mathop{\mathrm{GrMod}}}
\newcommand{\D}{\mathrm{D}} 
\newcommand{\Ab}{\mathrm{Ab}}
\newcommand{\Z}{\mathbb{Z}}
\newcommand{\frp}{\mathfrak{p}}
\newcommand{\LL}{\mathbf{L}}
\newtheorem{thm}[equation]{Theorem}
\newtheorem{lemma}[equation]{Lemma}
\newtheorem{thm-defi}[equation]{Theorem-Definition}
\newtheorem{prop}[equation]{Proposition}
\newtheorem{cor}[equation]{Corollary}
\newtheorem*{thm*}{Theorem}
\newtheorem*{lemma*}{Lemma}
\newtheorem*{cor*}{Corollary}
\newtheorem*{conj*}{Conjecture}
\newtheorem*{question*}{Question}
\theoremstyle{definition}
\newtheorem{defi}[equation]{Definition}
\newtheorem*{conv*}{Conventions}
\newtheorem*{Related work}{Related work}
\theoremstyle{remark}
\newtheorem{notation}[equation]{Notation}
\newtheorem{remark}[equation]{Remark}
\newtheorem*{remark*}{Remark}
\newtheorem{remarks}[equation]{Remarks}
\newtheorem{example}[equation]{Example}
\newtheorem{examples}[equation]{Examples}
\begin{document}
\title[Even more spectra]{Even more spectra: tensor triangular comparison maps via graded commutative 2-rings}

\author{Ivo Dell'Ambrogio}
\address{Universit\"at Bielefeld, Fakult\"at f\"ur Mathematik, BIREP Gruppe, Postfach 10\,01\,31, 33501 Bielefeld, Germany.}
\email{ambrogio@math.uni-bielefeld.de}

\author{Greg Stevenson}
\address{Universit\"at Bielefeld, Fakult\"at f\"ur Mathematik, BIREP Gruppe, Postfach 10\,01\,31, 33501 Bielefeld, Germany.}
\email{gstevens@math.uni-bielefeld.de}



\begin{abstract}
We initiate the theory of graded commutative 2-rings, a categorification of graded commutative rings. The goal is to provide a systematic generalization of Paul Balmer's comparison maps between the spectrum of tensor-triangulated categories and the Zariski spectra of their central rings. By applying our constructions, we compute the spectrum of the derived category of perfect complexes over any graded commutative ring, and we associate to every scheme with an ample family of line bundles an embedding into the spectrum of an associated graded commutative 2-ring. 
\end{abstract}

 \maketitle
 \date{}

\tableofcontents

\section{Introduction}
\label{intro}

\subsection{Motivation}
Consider a tensor-triangulated category $\mathcal T$, that is, an essentially small triangulated category~$\mathcal T$ equipped with a bi-exact symmetric tensor product. Paul Balmer \cite{balmer:prime} associates to $\mathcal T$ a functorial invariant -- a topological space called the \emph{spectrum} of~$\mathcal T$ and denoted by $\Spc \mathcal T$ -- which turns out to be the starting point of a powerful geometric theory known as \emph{tensor triangular geometry}. We refer to Balmer's 2011 ICM address \cite{balmer:icm} for an account of this theory and its many applications.

In order to successfully apply the abstract theory to examples it is essential that one provides a relevant description of the spectrum $\Spc \mathcal T$. This is in general a difficult task, not least because such a computation is equivalent to providing a classification of the thick tensor-ideals of $\mathcal T$. In \cite{balmer:spec3}, Balmer has introduced, in full generality, a natural and continuous \emph{comparison map}
\[
\rho_\mathcal T \colon \Spc (\mathcal T) \to \Spec( \End_\mathcal T(\unit))
\]
from the spectrum of $\mathcal T$ to the Zariski spectrum of the endomorphism ring of~$\unit$, the tensor unit. There are useful criteria to check that this map is surjective, which is often the case, and therefore $\rho_\mathcal{T}$ displays $\Spc \mathcal T$ as being fibered over a more familiar and tractable space. On the other hand, injectivity is more subtle and seems to occur less frequently in examples. 
This can be remedied somewhat by considering \emph{graded} endomorphism rings of~$\unit$: if~$g$ is a tensor-invertible object of~$\mathcal T$, one can define a $\Z$-graded ring $\End^{*,g}_\mathcal T(\unit):= \bigoplus_{n\in\Z} \Hom_\mathcal T(\unit , g^{\otimes n})$ that is graded commutative by the Eckmann-Hilton argument. One obtains in this way a graded version
\[
\rho_\mathcal T^{*,g}\colon \Spc(\mathcal T) \to \specgr(\End^{*,g}_\mathcal T(\unit))
\]
 of the comparison map, where the space on the right-hand side is now the spectrum of homogeneous prime ideals. 
Then $\rho_\mathcal T^{*,g}$ is injective -- an embedding -- if we take $\mathcal T$ to be $\D^\perf(X)$ for a  projective variety~$X$, with $g=\mathcal O(1)$ (see \cite{balmer:spec3}*{Remark 8.2}).
If we identify $X=\Proj(\End^{*,g}_\mathcal T(\unit))$, then $\rho_\mathcal T^{*,g}$ is a homeomorphism of  $\Spc(\mathcal T)$ onto~$X$.

It is now tempting to try and produce an endomorphism ring of~$\unit$ which is graded over, say, the Picard group of all tensor-invertible objects of $\mathcal T$; ideally then its homogeneous spectrum would retain sufficient information for the resulting comparison map to be injective in more cases. Unfortunately, it is not at all clear -- and probably false in general -- that one may produce in this way a graded commutative ring: as soon as one tries to grade over several invertible objects, some hostile coherence issues appear on the scene to spoil the fun.

\subsection{Results}

In the present paper we solve this difficulty by embracing the enemy, as it were. Instead of trying to construct a \emph{ring} out of the data $\{ \Hom_\mathcal T(\unit, g)\mid g\in \mathcal T \otimes\textrm{-invertible} \}$ which is graded commutative over the Picard \emph{group}, we instead consider this data as defining a \emph{2-ring} which is graded commutative over its Picard \emph{2-group}.
It is actually more natural to consider $\Hom_\mathcal T(g,h)$ for all invertible $g$ and~$h$, and in order to capture the relevant structure we define a \emph{graded commutative 2-ring} (Def.~\ref{defi:grcomm2ring}) to be an essentially small $\Z$-linear category~$\mathcal R$ equipped with an additive symmetric tensor product with respect to which every object is invertible. 

This strategy is successful because, as we will show in Section~\ref{sec:grcomm2rings}, the basic toolkit of affine algebraic geometry generalizes painlessly to this new context. Thus every graded commutative 2-ring~$\mathcal R$ has a Zariski spectrum of homogeneous prime ideals,  $\Spec \mathcal R$, and the assignment $\mathcal R\mapsto \spec \mathcal R$ defines a functor to spectral spaces and spectral maps, in the sense of Hochster~\cite{hochster:prime} (see Theorem \ref{thm:spec}). The theory of localization at multiplicative subsets works perfectly well and provides in particular localizations of~$\mathcal R$ at each prime $\mathfrak p\in \Spec \mathcal R$ and away from any morphism $r\in \Mor \mathcal R$ (Propositions \ref{prop:fractions} and \ref{prop:loc_spec}). 
All of this is due to the observation that, if $r$ and~$s$ are any two composable maps in a graded commutative 2-ring, then $r$ and $s$ ``commute with each other'' up to isomorphisms and twists (see Proposition~\ref{prop:pseudo_comm}).

In Section \ref{sec:comparison} we proceed to apply this generalized commutative algebra to tensor triangular geometry. If $\mathcal T$ is a $\otimes$-triangulated category, we call \emph{a central 2-ring of $\mathcal T$} any graded commutative 2-ring occurring as a full tensor subcategory of~$\mathcal T$. 
We can now state our main abstract result (see Theorem \ref{thm:rho}).

\begin{thm}
\label{thm:main1}
For every central 2-ring $\mathcal R$ of a $\otimes$-triangulated category~$\mathcal T$ there exists a continuous spectral map 
\[
\rho^\mathcal R_\mathcal T\colon \spc (\mathcal T) \to \spec (\mathcal R)
\]
defined by the formula $\rho^\mathcal R_\mathcal T (\mathcal P):=\{ r \in \Mor \mathcal R \mid \cone(r) \not\in \mathcal P \}$ for all $\mathcal P\in \Spc \mathcal T$. 
Moreover $\rho^\mathcal R_\mathcal T$ is natural in an evident sense with respect to pairs $(\mathcal T,\mathcal R)$.
\end{thm}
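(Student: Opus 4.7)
The plan is to generalize Balmer's construction in \cite{balmer:spec3} of the comparison map $\Spc(\mathcal T) \to \Spec(\End_\mathcal T(\unit))$ to the 2-ring setting, by systematically replacing the commutative ring $\End_\mathcal T(\unit)$ with the graded commutative 2-ring~$\mathcal R$ and exploiting the pseudo-commutativity of Proposition~\ref{prop:pseudo_comm} to absorb the twist isomorphisms coming from grading over the Picard 2-group.

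The heart of the proof is to verify, for each $\mathcal P \in \Spc \mathcal T$, that the set $\rho^\mathcal R_\mathcal T(\mathcal P) = \{r \in \Mor \mathcal R \mid \cone(r) \notin \mathcal P\}$ is a homogeneous prime ideal of $\mathcal R$ in the sense of Section~\ref{sec:grcomm2rings}. Homogeneity is automatic, as the defining condition is hom-set by hom-set. For the ideal and primality axioms, the natural approach is the ``localize-and-check'' strategy used by Balmer in the ungraded case: pass to the Verdier quotient $\mathcal T / \mathcal P$, a $\otimes$-triangulated category with~$(0)$ as its unique minimal prime and hence ``local'' in the tt-sense; identify $\rho^\mathcal R_\mathcal T(\mathcal P)$ with the class of morphisms of $\mathcal R$ which are sent to non-isomorphisms in $\mathcal T/\mathcal P$; and then show that, in a local $\otimes$-triangulated category, the non-isomorphisms between invertibles always form the unique maximal homogeneous prime ideal of any central 2-ring. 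This last step is the analogue of Balmer's tensor-nilpotence argument: given two morphisms $r,s$ of $\mathcal R$ whose relevant 2-ring product becomes invertible modulo $\mathcal P$, one uses the associated octahedral triangle together with the biexactness of $\otimes$ to deduce $\cone(r) \otimes \cone(s) \in \mathcal P$, whence $\otimes$-primality of $\mathcal P$ yields $\cone(r) \in \mathcal P$ or $\cone(s) \in \mathcal P$ as required. At each step, Proposition~\ref{prop:pseudo_comm} permits one to rearrange compositions and tensor products in $\mathcal R$ at the cost of coherent twist isomorphisms, thereby reducing the graded calculation to the ungraded one.

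Continuity and spectrality are then formal: for each $r \in \Mor \mathcal R$, the principal open $D(r) = \{\frp \in \Spec \mathcal R \mid r \notin \frp\}$ pulls back to
\[
(\rho^\mathcal R_\mathcal T)^{-1}(D(r)) \;=\; \{\mathcal P \mid \cone(r) \in \mathcal P\} \;=\; \Spc(\mathcal T) \setminus \supp(\cone(r)),
\]
a quasi-compact open of $\Spc \mathcal T$. Since the $D(r)$ form a basis of quasi-compact opens of $\Spec \mathcal R$ (by the results of Section~\ref{sec:grcomm2rings}), this simultaneously establishes continuity and the spectral map property in Hochster's sense. Naturality in pairs $(\mathcal T, \mathcal R)$ is then a straightforward diagram chase, using that an exact $\otimes$-functor preserves cones up to isomorphism and that tt-primes pull back along such functors via $\mathcal P \mapsto F^{-1}(\mathcal P)$.

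The main obstacle is the ideal verification in the first step: the pseudo-commutativity isomorphisms from Proposition~\ref{prop:pseudo_comm} must be tracked carefully and consistently absorbed into the candidate ideal $\rho^\mathcal R_\mathcal T(\mathcal P)$, so that Balmer's octahedral and tensor-nilpotence manipulations — which in the ungraded case rely on strict commutativity of $\End_\mathcal T(\unit)$ and on all morphisms living in a single hom set — really do extend to honest ideal and primality statements in the multi-graded $\mathcal R$, rather than merely to isomorphism-up-to-twist statements.
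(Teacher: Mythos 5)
Your proposal takes essentially the same route as the paper's own proof of Theorem~\ref{thm:rho}: pass to the Verdier quotient $\mathcal T/\mathcal P$, observe that $q_\mathcal P(\mathcal R)$ is a central 2-ring of the local tt-category $\mathcal T/\mathcal P$, invoke the fact (the paper's Theorem~\ref{thm:local_Rtot} plus Lemma~\ref{lemma:max_prime}) that the non-invertible morphisms form the unique maximal --- hence prime --- homogeneous ideal, identify $\rho^\mathcal R_\mathcal T(\mathcal P)$ as the preimage of this maximal ideal, and then read off continuity and spectrality from $(\rho^\mathcal R_\mathcal T)^{-1}(D_r)=U(\cone(r))$. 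One small caveat: your sketch of the internal mechanics of the locality theorem conflates two distinct steps. The tensor-nilpotence argument in the paper is used to show that the \emph{sum} $r+s$ of two non-invertibles is non-invertible (one tensors the putatively invertible $r+s$ with $\cone(r)\otimes\cone(s)$ and deduces that this tensor product of cones must vanish), not to handle a composition; there is no octahedral triangle involved, and closure of the non-invertibles under composition is instead a purely 2-ring-theoretic fact (Corollary~\ref{cor:composite_iso}, which relies on the pseudo-commutativity of Proposition~\ref{prop:pseudo_comm}). Since you only invoke the locality theorem as a black box for the main argument, this imprecision does not affect the correctness of your overall strategy.
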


By viewing ordinary (graded) commutative rings as graded commutative 2-rings, it is easy to see that Theorem \ref{thm:main1} generalizes the construction of Balmer's graded and ungraded comparison maps (\cf\ Examples \ref{ex:comm_2_rings} and \ref{ex:projective}). In view of the freedom of choice for the central 2-ring $\mathcal R$ that Theorem~\ref{thm:main1} offers us, we now have in our hands a whole new phylum of candidate spaces -- the Zariski spectra $\Spec \mathcal R$ -- to help us compute $\Spc \mathcal T$ in examples. More specifically, the geometry of graded commutative 2-rings bridges the gap between the ``mundane'' spectra of commutative rings and the ``exotic'' triangular spectra.

We should mention that, on our way to establishing Theorem \ref{thm:main1}, we prove that every central 2-ring of a \emph{local} tensor triangulated category (\cite{balmer:spec3}*{Def.~4.1}) must be a local graded commutative 2-ring (see Theorem \ref{thm:local_Rtot}). Furthermore, we completely extend Balmer's elegant technique of \emph{central localization} to graded commutative 2-rings (see Theorems \ref{thm:central_loc} and~\ref{thm:fractions_general}).

In Section~\ref{sec:examples} we illustrate our construction with two families of examples, namely (ordinary) graded commutative rings, and schemes with an ample family of line bundles.

Consider a $G$-graded ring~$R$, where $G$ is some abelian group, which is graded commutative with respect to some signing symmetric form $G\times G\to \Z/2$. 
We want to study the tensor triangulated category $\mathcal T:= \D^\perf(R)= \D(R\textrm-\Gr\Mod)^c$ of perfect complexes of graded $R$-modules.
To this end, we note that the companion category $\mathcal C_R$ of~$R$ (see Example \ref{ex:comm_2_rings}~(3)) is a graded commutative 2-ring whose spectrum $\spec \mathcal C_R$ is just $\specgr R$, the homogeneous spectrum of~$R$.
Moreover $\mathcal C_R$ is equivalent, as a graded commutative 2-ring  (essentially via the Yoneda embedding), to the central 2-ring $\mathcal R$ of $\D^\perf(R)$ generated by the twists $R(g)$, $g\in G$, of the ring itself.
With these identifications, we obtain our first application (see Theorem \ref{thm:grcommrings}):

\begin{thm}
\label{thm:main2}
The comparison map of Theorem \ref{thm:main1} yields a homeomorphism
\[
\Spc(\D^\perf(R)) \stackrel{\sim}{\to} \specgr(R)
\]
for every graded commutative ring $R$, identifying Balmer's universal support data in the triangular spectrum with the homological support data in the Zariski spectrum.
\end{thm}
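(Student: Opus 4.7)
The plan is to exhibit $\rho^\mathcal R_\mathcal T$ as a homeomorphism by constructing a continuous two-sided inverse built from homological support, and invoking a classification of thick $\otimes$-ideals of $\D^\perf(R)$.

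First I would unpack the formula for $\rho = \rho^\mathcal R_\mathcal T$ under the identification $\spec \mathcal R \simeq \specgr R$ coming from the equivalence $\mathcal R \simeq \mathcal C_R$ explained just before the statement. A homogeneous element $r \in R_{h-g}$, read as a morphism $r \colon R(g) \to R(h)$ in $\mathcal R \subseteq \D^\perf(R)$, has cone $R(h)/rR(g)$ concentrated in a single chain degree. Hence $\rho(\mathcal P) \subseteq R$ is literally the homogeneous prime ideal consisting of those $r$ whose cone does not lie in $\mathcal P$.

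Next I would define a candidate inverse $\sigma \colon \specgr(R) \to \Spc(\D^\perf(R))$ by the homological support rule
\[
\sigma(\mathfrak p) := \{M \in \D^\perf(R) \mid M \otimes^{\mathbf L}_R R_\mathfrak p \simeq 0\}.
\]
Graded localization at $\mathfrak p$ is $\otimes$-exact and produces a graded local ring with residue field $k(\mathfrak p)$; since $k(\mathfrak p)$ is a $\otimes$-prime of $\D^\perf(R_\mathfrak p)$ whose vanishing detects objects by graded Nakayama, $\sigma(\mathfrak p)$ is a $\otimes$-prime. The two formulas dovetail: an element $r \in R$ lies in $\rho(\sigma(\mathfrak p))$ iff $(R/r)_\mathfrak p \not\simeq 0$ iff $r$ is not a unit in $R_\mathfrak p$ iff $r \in \mathfrak p$. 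This yields $\rho \circ \sigma = \id_{\specgr(R)}$ immediately, so $\rho$ is surjective.

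The remaining content is bijectivity and the matching of topologies, and for this the essential input is a graded version of the Hopkins--Neeman--Thomason theorem: every thick $\otimes$-ideal of $\D^\perf(R)$ is determined by a Thomason subset of $\specgr(R)$ via homological support. Granting this classification, $(\specgr(R), \supph)$ is a classifying support datum in Balmer's sense \cite{balmer:spec3}, so by the universal property of $\Spc$ the map $\sigma$ is a continuous two-sided inverse to $\rho$, matching basic opens $D(r) \leftrightarrow U(\cone(r))$, and this simultaneously gives the asserted identification of universal and homological supports. The hard step is therefore the graded classification theorem itself, which one proves by adapting Thomason's argument: generate an arbitrary thick $\otimes$-ideal from compact objects of the form $R/I$ for finitely generated homogeneous ideals $I$, realized as iterated cones of Koszul complexes on homogeneous generators. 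The graded commutativity hypothesis is what makes these Koszul complexes well-behaved, and outside the Noetherian case one additionally needs Thomason's Tor-filtration argument to equate classical and homological support.
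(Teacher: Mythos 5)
Your proposal is correct in its overall structure, but it takes a genuinely different route from the paper's own proof, and the paper makes an explicit editorial remark about this choice. You propose to first reduce everything to a graded, non-noetherian version of the Hopkins--Neeman--Thomason classification of thick $\otimes$-ideals, proved by directly adapting Thomason's Tor-filtration/Koszul argument; once that classification is granted, $(\specgr R,\supph)$ is a classifying support datum and Balmer's universal property hands you a mutually inverse pair $(\rho,\sigma)$. This works, and your verification that $\rho\circ\sigma=\id$ (a homogeneous $r$ lies in $\rho(\sigma(\mathfrak p))$ iff $\cone(r)_{\mathfrak p}\not\simeq 0$ iff $r$ is a non-unit in $R_{\mathfrak p}$ iff $r\in\mathfrak p$) is correct. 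But you have hidden essentially all of the work in the sentence ``adapting Thomason's argument,'' and that adaptation is the hardest step.

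The paper deliberately \emph{avoids} re-running Thomason's argument in the non-noetherian graded setting. Instead it writes $R=\colim_\Lambda R_\lambda$ as a filtered union of its finitely generated graded subrings, each of which is \emph{noetherian}, and quotes the already-established noetherian graded classification from \cite{ivo_greg:graded}*{Theorem 5.1}. Naturality of the comparison map (Theorem \ref{thm:rho}) then gives commuting squares relating $\rho$ to the $\rho_\lambda$'s. Injectivity of $\rho$ is deduced from the abstract criterion of Proposition \ref{prop:injectivity}: one proves $\rho^{-1}\supph_R E=\supp_R E$ by descending $E$ to a perfect complex over some $R_\lambda$, which shows the sets $\supp_R\cone(r)$ ($r\in\mathcal R$) form a basis. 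Surjectivity is a separate argument: given $\mathfrak p\in\specgr R$, set $\mathfrak p_\lambda=\mathfrak p\cap R_\lambda$, take the corresponding $\mathcal P_\lambda\in\Spc\D^\perf(R_\lambda)$, and show that $\mathcal P:=\bigcup_\lambda \LL i_\lambda^*\mathcal P_\lambda$ is a $\otimes$-prime mapping to $\mathfrak p$. So the paper never constructs your map $\sigma$ globally and never needs the non-noetherian classification as input; instead it \emph{deduces} that classification (the Corollary following the theorem) as output. What your route would buy is a self-contained, colimit-free argument directly parallel to Thomason's; what the paper's route buys is that the graded commutative algebra is quarantined in the noetherian case, with the non-noetherian passage carried entirely by formal tensor-triangular machinery. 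If you wanted to complete your proposal as written you would need to actually supply the graded Tor-amplitude and prime-avoidance lemmas; as it stands your sketch is a plan rather than a proof of the crucial step.
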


The proof rests on a general abstract criterion for injectivity of $\rho^\mathcal R_\mathcal T$ (see Proposition \ref{prop:injectivity}) and the reduction to the case of a \emph{noetherian} graded commutative ring, which had already been established in~\cite{ivo_greg:graded}*{Theorem 5.1}.
By general tensor triangular geometry  as in \emph{loc.\,cit.\ }we can now immediately translate the previous theorem into the following classification result.

\begin{cor}
For any graded commutative ring~$R$ there is an inclusion-preserving bijection between:
\begin{enumerate}
\item thick subcategories $\mathcal C$ of $\D^\perf(R)$ that are closed under twisting by arbitrary elements $g\in G$, and
\item subsets $S$ of the homogeneous spectrum $\specgr R$ of the form $S=\bigcup_iZ_i$, where each $Z_i$ is closed and has quasi-compact complement in $\specgr R$.
\end{enumerate}
The correspondence maps a twist-closed thick subcategory $\mathcal C$ to the union of the homological supports of its objects, $\bigcup_{X\in \mathcal C} \supph X$, and conversely a given subset~$S$ of the required form is mapped to the subcategory $\{X\in \D^\perf(R)\mid \supph(X)\subseteq S\}$.
\end{cor}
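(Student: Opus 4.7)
The plan is to deduce the corollary by combining Theorem~\ref{thm:main2} with Balmer's general classification theorem of thick tensor-ideals in a tensor triangulated category in terms of Thomason subsets of its spectrum. Recall that a Thomason subset of a spectral space is precisely a union $\bigcup_i Z_i$ where each $Z_i$ is closed with quasi-compact complement --- exactly the form of subsets appearing in the statement.

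The first step I would carry out is to verify that for $\mathcal T = \D^\perf(R)$, the twist-closed thick subcategories coincide with the thick $\otimes$-ideals. One direction is immediate since twisting by $g$ is tensoring with the $\otimes$-invertible object $R(g)$. For the converse, given a twist-closed thick subcategory $\mathcal C$ and any object $X \in \mathcal C$, the full subcategory $\{Y \in \D^\perf(R) \mid X \otimes Y \in \mathcal C\}$ is thick and contains every twist $R(g)$; since $\D^\perf(R)$ is the thick subcategory generated by the family $\{R(g) \mid g \in G\}$, this subcategory must be all of $\D^\perf(R)$, so that $\mathcal C$ absorbs the tensor product. Moreover $\D^\perf(R)$ is rigid, every perfect complex being dualizable, so every thick tensor-ideal is automatically radical and Balmer's classification applies without any radicality hypothesis.

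Granting this first step, Balmer's theorem then gives a bijection between the thick tensor-ideals of $\D^\perf(R)$ and the Thomason subsets of $\Spc \D^\perf(R)$, with $\mathcal C$ mapped to $\bigcup_{X \in \mathcal C} \supp X$ (where $\supp$ denotes the universal support) and a Thomason subset $S$ mapped back to $\{X \mid \supp X \subseteq S\}$. Transporting this bijection along the homeomorphism $\Spc \D^\perf(R) \stackrel{\sim}{\to} \specgr R$ of Theorem~\ref{thm:main2}, which by construction identifies the universal support $\supp X$ with the homological support $\supph X$, I would obtain precisely the stated correspondence, along with the explicit description of its inverse maps.

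The main substantive obstacle is really the content of Theorem~\ref{thm:main2} itself; once that is in hand, the only ingredient specific to the corollary is the equivalence between twist-closed thick subcategories and thick tensor-ideals, which as indicated reduces to the standard fact that $\D^\perf(R)$ is thickly generated by twists of the tensor unit. Everything else is a formal translation along a homeomorphism of spectral spaces.
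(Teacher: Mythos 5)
Your proposal is correct and matches the route the paper takes: apply Balmer's classification of (radical) thick tensor-ideals by Thomason subsets of the spectrum, identify twist-closed thick subcategories with thick $\otimes$-ideals via the fact that $\D^\perf(R)$ is thickly generated by $\{R(g)\mid g\in G\}$ (together with rigidity to dispense with the radicality hypothesis), and transport everything along the support-preserving homeomorphism $\Spc\D^\perf(R)\cong\specgr R$ of Theorem~\ref{thm:grcommrings}. The paper states this only briefly (``we can now immediately translate the previous theorem\dots'' referring to \cite{ivo_greg:graded}); your write-up supplies exactly the details left implicit there.
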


\begin{remark}
We note that, by considering a problem about \emph{ordinary} graded commutative rings~$R$, we were naturally led to consider graded commutative 2-rings: namely the companion category $\mathcal C_R$ and its non-strict incarnation $\mathcal R$ inside of the derived category.
\end{remark}

Next consider a quasi-compact and quasi-separated scheme~$X$. Assume that $X$ admits an ample family of line bundles $\underline{\mathcal L}:=\{\mathcal L_\lambda\}_{\lambda\in \Lambda}$, and denote by $\mathcal R(\underline{\mathcal L})$ the central 2-ring of $\D^\perf(X)$ generated by the ample family. By applying the same abstract injectivity criterion as before, we obtain our second application (see Theorem~\ref{thm:ample}):

\begin{thm}
\label{thm:main3}
The comparison map  associated to the tensor-triangulated category $\D^\perf(X)$ and its central 2-ring $\mathcal R(\underline{\mathcal L})$ yields an injective map 
\[
\rho^{\underline{\mathcal L}}_X 
\colon X \hookrightarrow \Spec(\mathcal R(\underline{\mathcal L}))
\]
which, moreover, is a homeomorphism onto its image.
\end{thm}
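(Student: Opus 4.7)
My approach is to apply Theorem~\ref{thm:main1} with $\mathcal T=\D^{\perf}(X)$ and $\mathcal R=\mathcal R(\underline{\mathcal L})$, and then to invoke the abstract injectivity criterion of Proposition~\ref{prop:injectivity}, exploiting the defining property of an ample family of line bundles. The first step is to identify $\Spc \D^{\perf}(X)$ with the underlying topological space of~$X$ via the classical Thomason-Balmer reconstruction, under which a point $x\in X$ corresponds to the prime tensor-ideal $\mathcal P_x=\{F\in \D^{\perf}(X)\mid x\notin \supp F\}$. With this identification, the formula of Theorem~\ref{thm:main1} specializes to
\[
\rho^{\underline{\mathcal L}}_X(x) \;=\; \{\,r\in \Mor \mathcal R(\underline{\mathcal L}) \mid x\in \supp \cone(r)\,\}.
\]
Next I would make the concrete observation that a morphism $r\colon \mathcal L\to \mathcal L'$ between line bundles in $\mathcal R(\underline{\mathcal L})$ is nothing but a global section $s_r\in \Gamma(X,\mathcal L^{\vee}\otimes \mathcal L')$, and that $\supp \cone(r)=V(s_r)$ is its vanishing locus; hence $r$ lies in $\rho^{\underline{\mathcal L}}_X(x)$ if and only if $s_r$ vanishes at~$x$.

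For injectivity, I would verify the hypothesis of Proposition~\ref{prop:injectivity}. Given two distinct points $x\neq y$, since~$X$ is sober we may assume $x\notin \overline{\{y\}}$, so $y$ admits an open neighborhood avoiding~$x$. The defining property of the ample family~$\underline{\mathcal L}$, valid for qcqs~$X$, guarantees that the open sets $X_s$, as $s$ ranges over global sections of positive tensor products $\mathcal L_{\lambda_1}^{\otimes n_1}\otimes \cdots \otimes \mathcal L_{\lambda_k}^{\otimes n_k}$, form a basis of the Zariski topology on~$X$. We can thus find such an~$s$ with $y\in X_s$ and $x\notin X_s$, producing a morphism $r\colon \mathcal O_X\to \mathcal L_{\lambda_1}^{\otimes n_1}\otimes\cdots\otimes \mathcal L_{\lambda_k}^{\otimes n_k}$ in $\mathcal R(\underline{\mathcal L})$ whose cone is supported at~$x$ but not at~$y$, thereby separating the primes $\rho^{\underline{\mathcal L}}_X(x)$ and $\rho^{\underline{\mathcal L}}_X(y)$.

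To upgrade injectivity to a topological embedding, I would note that $(\rho^{\underline{\mathcal L}}_X)^{-1}(D(r))=X_{s_r}$ for every basic open $D(r)=\{\mathfrak p\mid r\notin \mathfrak p\}\subseteq \Spec \mathcal R(\underline{\mathcal L})$. By ampleness, these preimages already form a basis of open subsets of~$X$. Combined with the continuity granted by Theorem~\ref{thm:main1} and the injectivity proved above, this forces $\rho^{\underline{\mathcal L}}_X$ to be a homeomorphism onto its image, since every open subset of~$X$ arises as the preimage of an open of $\Spec \mathcal R(\underline{\mathcal L})$.

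The main delicate point in the argument is checking that the abstract construction of Theorem~\ref{thm:main1}, once transported through the Balmer-Thomason identification, really yields the concrete set-theoretic formula displayed above, and that the criterion of Proposition~\ref{prop:injectivity} applies to morphisms of shape $\mathcal O_X\to \bigotimes \mathcal L_{\lambda_i}^{\otimes n_i}$; the latter is harmless because such morphisms lie in $\Mor \mathcal R(\underline{\mathcal L})$ by construction of this central 2-ring as the full subcategory generated under the tensor product by the ample family.
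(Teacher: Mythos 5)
Your proposal is essentially the same approach as the paper's: identify $\Spc\D^\perf(X)$ with $X$ via Balmer--Thomason reconstruction, observe that for $r\colon\mathcal L\to\mathcal L'$ in $\mathcal R(\underline{\mathcal L})$ one has $\supp\cone(r)=Z(s_r)$ for the corresponding section $s_r$ of $\mathcal L^\vee\otimes\mathcal L'$, and exploit ampleness to see that these closed sets form a basis. Once that basis property is established, Proposition~\ref{prop:injectivity} immediately gives \emph{both} injectivity and the homeomorphism onto the image, so your separate ``separating points'' argument and your final ``upgrade to embedding'' paragraph are redundant --- they effectively re-prove the two halves of Proposition~\ref{prop:injectivity} rather than invoking it once. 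The paper just applies Lemma~\ref{lemma:bundlesupport} and ampleness to get the basis condition and then cites the proposition; this is the cleaner route.

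One small logical slip: you write ``we may assume $x\notin \overline{\{y\}}$, so $y$ admits an open neighborhood avoiding $x$,'' and then look for $s$ with $y\in X_s$ and $x\notin X_s$. But $x\notin\overline{\{y\}}$ gives an open $U$ with $x\in U$ and $y\notin U$, i.e.\ a neighborhood of $x$ avoiding $y$, not vice versa. So ampleness produces $s$ with $x\in X_s$ and $y\notin X_s$. This still separates the two primes (since then $y\in\supp\cone(r)$ and $x\notin\supp\cone(r)$), so the argument survives, but the directions should be corrected. You don't need sobriety, only the $T_0$ property of $X$ (which every scheme has), and you should say ``WLOG'' to account for the fact that only one of $x\notin\overline{\{y\}}$, $y\notin\overline{\{x\}}$ need hold.
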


By all rights, the morphism $\rho^{\underline{\mathcal L}}_X$ should be geometric, provided we know in which sense to associate some geometry with the spectrum of a graded commutative 2-ring. In this case the geometric point of view is explained in work of Brenner and Schr\"{o}er \cite{brenner-schroer} and our result recovers, via tensor triangular geometry, the map of topological spaces underlying their construction (see Remark~\ref{rem:ffs}).


\subsection{Some recollections and notations}
\label{subsec:inv}
Throughout all categories are understood to be locally small.

Let us recall a few facts and fix some notation about closed symmetric monoidal categories (for further details we refer \eg\ to \cite{kelly-laplaza} or \cite{lewis-may-steinberger}*{III.1}).
In any monoidal category~$\mathcal C$ with tensor $\otimes$ and unit object~$\unit$, we will reserve the letters $\lambda$, $\rho$ and $\alpha$ for the structural coherence natural isomorphisms (left unitor, right unitor, and associator)
\[
\lambda_x \colon \unit \otimes x \stackrel{\sim}{\to} x
\quad,\quad
\rho_x \colon x\otimes \unit \stackrel{\sim}{\to} x
\quad,\quad
\alpha_{x,y,z} \colon x\otimes (y\otimes z) \smash{\stackrel{\sim}{\to}} (x\otimes y)\otimes z 
\]
for all objects $x,y,z\in \mathcal C$.
By \emph{tensor category} we will always mean a symmetric monoidal category, and we will denote the symmetry isomorphism by
\[
\gamma_{x,y}\colon x\otimes y\stackrel{\sim}{\to} y\otimes x \;.
\]
Let $\mathcal C$ be a tensor category. The dual of a dualizable object $x$ of $\mathcal C$ will be denoted by~$x^\vee$. It is determined up to isomorphism by a natural bijection $\mathcal C(x^\vee\otimes -, -)\cong \mathcal C(-, x \otimes -)$, or equivalently, by two maps 
$\eta_{x} \colon \unit \to x\otimes x^\vee$ and $\varepsilon_{x} \colon x^\vee \otimes x\to \unit$ making the following two diagrams commute (where one goes backward along the structure isomorphisms where necessary):
\[
\xymatrix@C=6pt{
 (x\otimes x^\vee) \otimes x  &&
   x \otimes (x^\vee \otimes x)  \ar[ll]_-\alpha^-\sim \ar[d]^{\id_x \otimes \varepsilon_x} \\
\unit \otimes x \ar[r]^-\lambda_-\sim \ar[u]^{\eta_x \otimes \id_x}  &
 x &
  \ar[l]_-\rho^-\sim x \otimes \unit
 }
 \quad\;
 \xymatrix@C=6pt{
x^\vee \otimes ( x \otimes x^\vee) \ar[rr]^-\alpha_-\sim  &&
 (x^\vee \otimes x )\otimes x^\vee \ar[d]^{\varepsilon_x \otimes \id_{x^\vee}}  \\
x^\vee \otimes \unit \ar[r]_-\sim^-\rho \ar[u]^{\id_{x^\vee} \otimes \eta_x} &
 x^\vee &
  \unit \otimes x^\vee \ar[l]^-\sim_-\lambda
 }
\]
These are sometimes called the ``zig-zag identities'' and are nothing but the two triangle identities for the adjunction between $x\otimes -$ and $x^\vee \otimes -$ (slightly disguised). 
The assignment $x\mapsto x^\vee$ extends canonically to morphisms to define a self-duality functor (a contravariant autoequivalence) on the full subcategory of dualizable objects in~$\mathcal C$.

The evaluation  $\varepsilon$ and coevaluation $\eta$ are moreover dinatural, in the sense that for any morphism $f\colon x \to y$, the following two squares are commutative:
\begin{equation} \label{dinat}
\xymatrix{
y^\vee \otimes x \ar[r]^-{\id_{y^\vee} \otimes f} \ar[d]_{f^\vee \otimes \id_x} 
 & y^\vee \otimes y \ar[d]^{\varepsilon_{y}} &  \unit \ar[r]^-{\eta_x} \ar[d]_{\eta_y}
 & x \otimes x^\vee \ar[d]^{f \otimes \id_{x^\vee}} \\
x^\vee \otimes x \ar[r]^-{\varepsilon_{x}} & \unit & y \otimes y^\vee \ar[r]^-{\id_y \otimes f^\vee } & y \otimes x^\vee
}
\end{equation}
(In fact by definition of the functor $(-)^\vee$ the isomorphism $\mathcal C(x^\vee \otimes y,z)\cong \mathcal C(y, x\otimes z)$ is natural in $x,y$ and~$z$, and thus defines an adjunction with parameter. This can be seen to be equivalent to the unit and counit being dinatural as above, see \cite{maclane}*{IX.4}.)

An object $x$ is \emph{invertible} if there exists an object~$x'$ and an isomorphism $x\otimes x'\cong \unit$ (and therefore $x'\otimes x\cong \unit$). If $x$ is invertible it is also dualizable, and indeed, the dual $x^\vee$ provides a canonical choice for an inverse~$x'$, since in this case the unit and counit maps are isomorphisms $\eta\colon \unit \stackrel{\sim}{\to} x\otimes x^\vee$ and $\varepsilon\colon x^\vee\otimes x\stackrel{\sim}{\to} \unit $.

\begin{notation}
In order to alleviate our notational burden, we will often omit  from displayed diagrams all tensor symbols~$\otimes$ and subscripts for natural transformations, and we will often denote an identity map $\id_x$ by the object~$x$. 
Thus for instance, if there is no danger of confusion, we will simply write 
\begin{equation*} 
\xymatrix{
y^\vee  x \ar[r]^-{y^\vee  f} \ar[d]_{f^\vee  x} & y^\vee  y \ar[d]^{\varepsilon} & \unit \ar[r]^-{\eta} \ar[d]_{\eta}
 & x  x^\vee \ar[d]^{f x^\vee  }\\
x^\vee  x \ar[r]^-{\varepsilon} & \unit & y y^\vee  \ar[r]^-{y f^\vee } & y x^\vee  
}
\end{equation*}
for the dinaturality squares~\eqref{dinat}.
Occasionally we will also omit the associativity and unit isomorphisms, as justified by Mac Lane's coherence theorem.
\end{notation}

\section{Graded commutative 2-rings}
\label{sec:grcomm2rings}

The following definition is commonly understood to be a sensible categorification of the concept of abelian group, see \eg\ \cite{baez-lauda:higher}, \cite{dupont:thesis} and the many references therein.

\begin{defi}
A \emph{symmetric 2-group} is a symmetric monoidal groupoid in which every object is invertible for the tensor product. 
\end{defi}

\begin{examples}
\label{ex:picard_category}
\begin{enumerate}

\item
Every abelian group $G$ can be considered as a discrete symmetric 2-group, \ie, as the discrete category with object set $\obj G=G$ equipped with the strict symmetric tensor product $g\otimes h = g+h$ and~$\unit=0$.

\item
The \emph{Picard 2-group} (or \emph{Picard groupoid}, \emph{Picard category}) of a symmetric monoidal category~$\mathcal C$ is the symmetric 2-group obtained from~$\mathcal C$ by considering the (non-full) monoidal subcategory of all invertible objects and isomorphisms between them. 

\end{enumerate}
\end{examples}

Let $\mathcal G$ be an essentially small symmetric 2-group.

\begin{defi}
\label{defi:grcomm2ring}
A \emph{$\mathcal G$-graded commutative 2-ring~$\mathcal R$}  is a symmetric monoidal $\Z$-category~$\mathcal R$ equipped with a symmetric monoidal functor $\mathcal G\to \mathcal R$ which is surjective on objects (thus $\mathcal R$ is essentially small and its objects are invertible). 
\end{defi}

Typically $\mathcal G\to \mathcal R$ will simply be the inclusion of the Picard 2-group of $\mathcal R$, in which case we will simply speak of a \emph{graded commutative 2-ring}.
Hence we will not distinguish notationally between the objects of $\mathcal R$ and those of~$\mathcal G$; they will be written $g,h,\ell,\ldots\in \mathcal G$ and thought of as ``degrees''.
We will also think of the morphisms $r$ of $\mathcal R$ as ``elements'', so for instance we will tend to write $r\in \mathcal R$ rather than $r\in \Mor \mathcal R$.

We make the convention that natural structure maps, for example the left and right unitors, are written in the direction in which they occur; when defining a composite using such maps it is understood that the composite is obtained by going backward (taking the inverse) along any such arrows which are in the ``wrong'' direction. Given a morphism $r\colon g\to h$ in $\mathcal{R}$ and an object $\ell \in \mathcal{R}$, we will refer to the morphisms
 $r\otimes \ell =r\otimes \id_{\ell} \colon g\otimes \ell \to h \otimes \ell$ and $\ell\otimes r\colon \ell\otimes g\to \ell\otimes h$ as the \emph{right twist}, respectively \emph{left twist}, \emph{of~$r$ by~$\ell$}.

\begin{examples}
\label{ex:comm_2_rings}
\begin{enumerate}
\item 
The zero category $\{0\}$ is a $\mathcal G$-graded commutative 2-ring for any choice of~$\mathcal G$, in a unique way. 
There is an evident many-objects version of $\{0\}$ for any choice of object class with a distinguished element (which will be elected to be the tensor unit), which is of course monoidally equivalent to $\{0\}$ and is graded over a suitable trivial symmetric 2-group.   

\item Let $R$ be a commutative ring. We may consider $R$ as a $\Z$-linear category with a single object~$*$ whose endomorphism ring is~$R$, and we may equip this category with the strict symmetric tensor product $r\otimes s:= rs$, for $r,s\in R$. Thus we can view $R$ as a commutative 2-ring graded by the trivial (symmetric 2-)group.

\item   (See \cite{ivo_greg:graded} for details.)
 Let $G$ be an abelian group, and let $R$ be a $G$-graded $\epsilon$-commutative ring, for some signing symmetric form $\epsilon\colon G\times G\to \Z/2$. 
The \emph{companion category} of $R$, denoted~$\mathcal C_R$, is the small $\Z$-linear category with object set~$\obj \mathcal C_R:=G$, with Hom groups $\mathcal C_R(g,h):=R_{h-g}$, and with composition given by the multiplication of~$R$. Multiplication also yields a strict symmetric tensor product, which on objects $g,h\in G$ is the sum $g\otimes h:=g+h$ and on two maps $r\colon g\to h$ and $r'\colon g'\to h'$ is given by the formula $r\otimes r':=(-1)^{\epsilon(g,g'-h')}rr'$. 
Thus the companion category $\mathcal C_R$ is a $G$-graded commutative 2-ring, where $G$ is seen as a discrete symmetric 2-group. 
Its tensor structure extends to the category $\Ab^{\mathcal C_R}$ of additive functors, and this is tensor equivalent to $R$-$\GrMod$, the tensor category of left graded modules. 
Thus $R$ and $\mathcal C_R$ are Morita tensor equivalent, and once again we can think of $R$ as being a graded symmetric 2-ring, namely~$\mathcal C_R$. 

\item Let $X$ be a scheme and suppose that $\{\mathcal{L}_i\; \vert \; i\in I\}$ is a family of line bundles on $X$. Then the full subcategory of quasi-coherent $\mathcal{O}_X$-modules with objects all finite tensor products of the $\mathcal{L}_i$ and their inverses is a symmetric 2-ring. It is graded over the corresponding subcategory of the Picard 2-group of~$X$.
\end{enumerate}
\end{examples}

\begin{defi}
Let $\iota\colon \mathcal G\to \mathcal R$ and $\iota' \colon \mathcal G'\to \mathcal R'$ be two graded commutative 2-rings. 
A \emph{morphism} $F: \mathcal R\to \mathcal R'$ is a square of $\otimes$-functors
\[
\xymatrix{
\mathcal G \ar[r]^-{\iota} \ar[d]_F &
 \mathcal R \ar[d]^F \\
\mathcal G' \ar[r]^-{\iota'} &
 \mathcal R'
}
\]
such that $F\colon \mathcal R\to \mathcal R'$  is additive and such that there exists an isomorphism $F\iota \cong \iota'F$.
If $\iota$ and $\iota'$ are inclusions (\eg\ when $\mathcal R$ and $\mathcal R'$ are graded by their Picard 2-groups), then we only talk about the tensor functor $F\colon \mathcal R\to \mathcal R'$ and assume that $\mathcal G\to \mathcal G'$ is its restriction.
\end{defi}


Among the numerous examples of morphisms that will be used later on, we mention those produced by localization (see \S\ref{subsec:loc}), and the inclusions between different central 2-rings in a tensor-triangulated category (see \S\ref{subsec:central2rings}).  

\subsection{Pseudo-commutativity}
Let $\mathcal R$ be a graded commutative 2-ring.

\begin{defi}
Let $r\in \mathcal R$. By a \emph{translate} of~$r$ we will mean any morphism of~$\mathcal R$ obtained from $r$ by the operations of taking twists of morphisms and composing with isomorphisms on either side, in any combination.
\end{defi}

\begin{lemma}
\label{lemma:translate}
Every translate of~$r$ is isomorphic to a left twist and also to a right twist of~$r$, that is, has the form $u(g\otimes r) v$ and also the form $u' (r\otimes g') v'$ for some objects $g,g'\in \mathcal G$ and some isomorphisms $u,v,u',v'$. Moreover, translation (the relation ``$\,\tilde r$ is a translate of~$r$'') is an equivalence relations on the morphisms of~$\mathcal R$.
\end{lemma}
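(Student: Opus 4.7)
My plan is to prove both canonical forms simultaneously by induction on the length of the sequence of operations (left- or right-twisting by an object of $\mathcal G$, and pre- or post-composing with an isomorphism) used to construct the translate $\tilde r$ from $r$. The base case is handled by writing $r = \lambda^{-1}(\unit \otimes r)\lambda$ and symmetrically $r = \rho^{-1}(r\otimes \unit)\rho$, which already exhibits $r$ itself in both canonical forms with twisting object~$\unit$.

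For the inductive step for the left-twist form, suppose $\tilde r = u(g\otimes r)v$ is already in canonical form. Pre- or post-composing with further isomorphisms clearly preserves the form, as the outer factors absorb them. Left-twisting by an object $h$ yields $(h\otimes u)\circ(h\otimes(g\otimes r))\circ(h\otimes v)$, and I rewrite the middle factor using naturality of the associator as $\alpha^{-1}\circ((h\otimes g)\otimes r)\circ\alpha$, producing the canonical form with new twisting object $h\otimes g$. Right-twisting by $h$ I reduce to the previous case by first using naturality of the symmetry to rewrite $(g\otimes r)\otimes h = \gamma^{-1}\circ(h\otimes(g\otimes r))\circ\gamma$. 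Statement (b) then follows from (a) by one final application of naturality of~$\gamma$, which converts any expression $u(g\otimes r)v$ into $u\gamma^{-1}(r\otimes g)\gamma v$; alternatively, one can run the mirror induction directly.

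For the equivalence-relation claim I handle the three axioms in turn. Reflexivity is the base case above. Transitivity is essentially tautological, since concatenating two operation sequences is itself an operation sequence. The only substantive point is symmetry, which is where the hypothesis that every object of $\mathcal R$ is invertible enters: given $\tilde r = u(g\otimes r)v$, the operation of left-twisting by the dual $g^\vee$ produces a translate $g^\vee\otimes \tilde r$ of~$\tilde r$, and re-running the associator manipulations above this equals $((g^\vee \otimes g)\otimes r)$ conjugated by isomorphisms; the evaluation isomorphism $\varepsilon\colon g^\vee\otimes g \stackrel{\sim}{\to}\unit$ combined with the unitor then identifies this, again up to conjugation by isomorphisms, with $r$ itself, so $r$ is a translate of $\tilde r$ as required.

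The main obstacle I expect is not conceptual but purely bookkeeping: throughout the argument one must juggle iterated instances of the coherence isomorphisms $\alpha,\lambda,\rho,\gamma$ together with the evaluation~$\varepsilon$. I intend to handle this informally by appealing to Mac Lane's coherence theorem, in line with the notational convention already adopted in the paper, so that any chain of canonical structure isomorphisms between the same iterated tensor products may be treated as a single unambiguous isomorphism.
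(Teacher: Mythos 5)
Your proof is correct and follows essentially the same strategy as the paper's: reduce everything to a single left (or right) twist by using naturality of $\gamma$ to convert right twists to left twists, naturality of $\alpha$ to merge nested twists, and functoriality to absorb isomorphisms into the outer factors. The paper states this quite tersely ("twisting preserves isomorphisms and an iteration of twists is again a twist"), whereas you make the induction explicit and, usefully, spell out that symmetry of the translation relation relies on the invertibility of the twisting object $g$ via $\varepsilon\colon g^\vee\otimes g\stackrel{\sim}{\to}\unit$; the paper leaves this last point as "similarly easy to check".
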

\begin{proof}
Every right twist~$s\otimes h$ of a morphism~$s$ by an object~$h$ is isomorphic to the corresponding left twist $h\otimes s$, by the symmetry of the tensor product.
Since twisting preserves isomorphisms and an iteration of twists is again (up to isomorphism) a twist, we conclude that every translate of~$r$ can be brought to either of the two forms.
It is similarly easy to check that translation is an equivalence relation.
\end{proof}

\begin{example}
\label{ex:dual_translate}
For every map $r\colon g\to h$ in $ \mathcal R$, the dinaturality square 
\[
\xymatrix{ h^\vee g \ar[d]_{r^\vee g} \ar[r]^-{h^\vee r} & h^\vee h \ar[d]^{\varepsilon}_\sim \\
g^\vee g \ar[r]^-{\varepsilon}_-\sim & \unit
}
\]
shows that $r$ and $r^\vee$ are translates of each other.
\end{example}

The next all-important proposition states that, in a graded commutative 2-ring, composition is commutative \emph{up to translation}. Later on we will also use a slight generalization of the same argument, when the need will arise to consider algebras over graded commutative 2-rings  (see \S\ref{subsec:loc_alg}). 

\begin{prop}
\label{prop:pseudo_comm}
Let $r\colon g \to h$ and $s\colon h\to \ell$ be two composable morphisms of~$\mathcal R$. 
Then there exists  in $\mathcal R$ a commutative diagram of the form 
\[
\xymatrix{ 
g \ar@/_/[d]_u^\sim \ar[r]^-r &
 h \ar[r]^-s  &
   {\ell}  \\
hh^\vee g \ar[r]^-{s h^\vee g} &
  \ell h^\vee g \ar[r]^-{\ell h^\vee r} & \ell h^\vee h   \ar@/_/[u]_v^\sim
}
\]
and where $u$ and~$v$ are some isomorphisms. In particular, $sr= r' s'$ for some translates $r'$ of~$r$ and $s'$ of~$s$.
\end{prop}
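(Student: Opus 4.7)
The plan is to construct $u$ and $v$ explicitly from the invertibility of $h$ and then verify commutativity using naturality of the structural maps together with one application of the zig-zag identity. Since every object of $\mathcal{R}$ is invertible, the coevaluation $\eta_h\colon \unit\to hh^\vee$ and the evaluation $\varepsilon_h\colon h^\vee h\to \unit$ are isomorphisms. I would set $u := (\eta_h\otimes \id_g)\circ \lambda_g^{-1}\colon g \to hh^\vee g$ and $v := \rho_\ell\circ (\id_\ell\otimes \varepsilon_h)\colon \ell h^\vee h \to \ell$, suppressing associators throughout; both are isomorphisms by construction.

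For the commutativity of the diagram, observe first that the maps $sh^\vee g$ and $\ell h^\vee r$ act on disjoint tensor factors of $hh^\vee g$, so by the interchange law (bifunctoriality of $\otimes$) their composite also equals $(sh^\vee h)\circ (hh^\vee r)$. Now $u$ is the component at~$g$ of the natural transformation $(\eta_h\otimes -)\circ \lambda^{-1}$, and $v$ is the component at $\ell$ of $\rho\circ (- \otimes \varepsilon_h)$; naturality therefore yields $(hh^\vee r)\circ u_g = u_h\circ r$ and $v_\ell\circ (sh^\vee h) = s\circ v_h$. Stringing everything together, the bottom composite of the diagram, pre- and post-composed with $u$ and $v$, reduces to $s\circ v_h\circ u_h\circ r$.

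Finally, $v_h\circ u_h = \id_h$ is precisely one of the zig-zag identities recalled in \S\ref{subsec:inv} (the first triangle identity for the adjunction $h\otimes -\dashv h^\vee\otimes -$). Hence $v\circ (\ell h^\vee r)\circ (sh^\vee g)\circ u = sr$, which is the commutativity of the diagram. For the last sentence of the proposition, setting $s' := (sh^\vee g)\circ u$ and $r' := v\circ (\ell h^\vee r)$ gives the factorization $sr = r' s'$; here $s'$ is the right twist $s\otimes \id_{h^\vee g}$ composed with an isomorphism, and $r'$ is the left twist $\id_{\ell h^\vee}\otimes r$ composed with an isomorphism, so by Lemma~\ref{lemma:translate} they are translates of~$s$ and~$r$ respectively.

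The only real obstacle is careful bookkeeping with associators and unitors. Once normal forms for $u$ and $v$ are pinned down as above, the verification collapses to a short chase through the interchange law, two naturality squares, and one zig-zag identity. Conceptually, the statement is simply that the invertibility of~$h$ allows us to insert $hh^\vee\cong \unit$ between~$g$ and the ``$r$-slot'', placing $r$ and $s$ on orthogonal tensor factors where they commute automatically.
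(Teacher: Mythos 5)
Your argument is correct, and your $u$ and $v$ agree (after a zig-zag simplification of the left column) with the ones defined by the outer frame of the paper's diagram, but the verification is organized differently. The paper's proof builds the full grid~\eqref{eq:pseudo_comm}, threading the left-hand path through $gg^\vee g$ and invoking the dinaturality of $\eta$ and $\varepsilon$ twice (these are the two commuting squares involving $r^\vee$), then reads commutativity off the outer frame. You dispense with $r^\vee$ entirely: you treat $(\eta_h\otimes -)\circ\lambda^{-1}$ and $\rho\circ(-\otimes\varepsilon_h)$ as natural transformations $\Id\Rightarrow hh^\vee\otimes-$ and $-\otimes h^\vee h\Rightarrow\Id$, apply one naturality square for each (at $r$ and $s$ respectively), slide $r$ and $s$ past each other by the interchange law since they live in orthogonal tensor slots, and collapse $v_h\circ u_h=\id_h$ by the zig-zag identity for $h$. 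This is a leaner derivation --- two naturality squares, one interchange, one triangle identity --- that avoids introducing $r^\vee$ at all, at the modest price of being a bit less visual than the paper's explicit grid. Both proofs share the essential idea of inserting $hh^\vee\cong\unit$ so that $r$ and $s$ can be made to act on disjoint factors.

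One small slip: in the last sentence you cite Lemma~\ref{lemma:translate} for the fact that a twist composed with an isomorphism is a translate, but that is immediate from the \emph{definition} of ``translate''; the Lemma establishes the converse normal-form statement (and that translation is an equivalence relation). The conclusion you draw is correct; only the citation is misplaced.
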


\begin{proof}
We may construct the following commutative diagram:
\begin{equation} \label{eq:pseudo_comm} 
\xymatrix{
 *+[F]{g}  \ar[r]^-{r} &
    *+[F]{h}  \ar[r]^-{s} &
     *+[F]{\ell} & \\
 g\unit \ar[u]^\sim_{\rho} \ar[r]^-{r\unit} &
  h \unit  \ar[u]^{\rho} \ar[r]^-{s\unit} &
    \ell \unit  \ar[u]_-\sim^-{\rho} & \\
 gg^\vee g \ar[u]_{g \varepsilon_{g}}^\sim  \ar[r]^-{rg^\vee g} &
  h g^\vee g \ar[u]^{h \varepsilon_{g}} \ar[r]^-{sg^\vee g}  &
   { \ell g^\vee g } \ar[u]_\sim^{\ell \varepsilon_{g}} \ar[r]^-\sim_-{\ell \varepsilon_{g}}  & {\ell \unit} \\
\unit g \ar[u]^\sim_{\eta_{g}\, g} \ar[r]_-\sim^-{\eta_{h}\, g} & 
 *+[F]{ hh^\vee g } \ar[u]_{hr^\vee g} \ar[r]_-{sh^\vee g}   &
  *+[F]{\ell h^\vee g} \ar[u]_{\ell r^\vee g} \ar[r]_-{\ell h^\vee r} & *+[F]{\ell h^\vee h} \ar[u]_\sim^{\ell \varepsilon_{h}}
}
\end{equation}
Indeed, the top two squares commute by the naturality of~$\rho$;
the bottom-left and bottom-right ones by applying $-\otimes g$, resp.\ $\ell\otimes -$,  to appropriate dinaturality squares;
the remaining three by the (bi)\-functor\-ial\-ity of the tensor product.  
Note that all the morphisms labeled $\sim$ are isomorphisms, because the objects $g$ and $h$ are $\otimes$-invertible.
Now it suffices to compose the maps on its outer frame between the boxed objects, 
in order to obtain a diagram as claimed.
\end{proof}

\begin{cor}
\label{cor:composite_iso}
In a graded commutative 2-ring, any (right or left) composite of a non-invertible morphism is again  non-invertible.
\end{cor}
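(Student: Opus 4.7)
The plan is to prove the contrapositive: if the composite $sr$ is an isomorphism, with $r\colon g\to h$ and $s\colon h\to\ell$, then $r$ is an isomorphism. The case of a right composite $rt$ is completely symmetric, obtained by applying Proposition~\ref{prop:pseudo_comm} with the roles of the two composable morphisms swapped (so it suffices to treat $sr$).

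I would begin by invoking Proposition~\ref{prop:pseudo_comm} to write
\[
sr \;=\; v \circ (\ell h^\vee r)\circ (s h^\vee g) \circ u
\]
for isomorphisms $u\colon g\stackrel{\sim}{\to} hh^\vee g$ and $v\colon \ell h^\vee h\stackrel{\sim}{\to} \ell$. Applying the interchange law for the tensor bifunctor, the middle composite equals $\delta := s\otimes h^\vee\otimes r$, and this same morphism $\delta$ also admits the \emph{second} factorization $\delta = (s h^\vee h)\circ (h h^\vee r)$. Hence under the hypothesis that $sr$ is invertible, so is $\delta$.

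The decisive step is then the elementary categorical observation: whenever a composite $AB$ is an isomorphism, $A$ is a split epimorphism and $B$ is a split monomorphism. Reading off the two factorizations of $\delta$ yields, from the first, that the translate $\ell h^\vee r$ of $r$ is split epi; and, from the second, that the translate $h h^\vee r$ of $r$ is split mono. Tensoring with invertible objects and composition with isomorphisms are equivalences of categories, so both ``split epi'' and ``split mono'' are preserved and reflected by the translation equivalence of Lemma~\ref{lemma:translate}. Consequently $r$ itself is simultaneously split epi and split mono, and hence an isomorphism.

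The main obstacle that this strategy circumvents is that ``$AB$ is invertible'' does \emph{not} force $A$ or $B$ to be invertible in a general category; it only delivers a split epi and a split mono. What rescues the argument is the \emph{double} factorization of $\delta$ coming from bifunctoriality of $\otimes$: combining the split-epi half of the first factorization with the split-mono half of the second produces both pieces of data for translates of $r$, which together are enough to conclude that $r$ is invertible.
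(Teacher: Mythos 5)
Your proof is correct, and it uses the same key tool as the paper (Proposition~\ref{prop:pseudo_comm}) but reads off the conclusion in a slightly cleaner, more symmetric way. The paper's argument first observes that $r(sr)^{-1}$ is a right inverse of $s$ (so $s$ is split epi), then applies Proposition~\ref{prop:pseudo_comm} to find that the twist $s' = s\otimes h^\vee\otimes g$ is split mono (because $\delta := r's'$ is invertible), and deduces $s$ is invertible; $r = s^{-1}(sr)$ then follows. You instead note that the invertible composite $\delta$ in the bottom row of the commutative diagram is literally $s\otimes h^\vee\otimes r$, and then exploit the interchange law to obtain \emph{two} different decompositions of $\delta$ into a composite of twists of $r$ and of $s$. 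Reading off a split epi from one and a split mono from the other, and using that twisting by the invertible objects $\ell\otimes h^\vee$ and $h\otimes h^\vee$ is an autoequivalence, you conclude $r$ is invertible directly (and, although you don't spell it out, the same two factorizations simultaneously yield both one-sided inverses for $s$). The payoff of your approach is that it dispenses with the separate observation that $r(sr)^{-1}$ is a right inverse, extracting all the data from the single morphism $\delta$; the cost is a slightly heavier use of the bifunctoriality of $\otimes$, which the paper uses only once. One small presentational remark: your invocation of ``the symmetric case $rt$'' is a touch redundant, since your argument already produces the full conclusion ($r$ and $s$ both invertible) simultaneously, so the reduction ``it suffices to treat $sr$'' can be replaced by a one-line remark that the two factorizations of $\delta$ yield both halves at once.
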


\begin{proof}
We prove the converse: if $r\colon g\to h$ and $s\colon h \to \ell$ are two composable maps such that their composite $sr$ is invertible, then $s$ and $r$ are invertible. 
Of course, it suffices to show that $s$ is invertible.
As in any category, for this it suffices to show that $s$ has both a right inverse and a left inverse.
Since $sr$ is invertible, $r(sr)^{-1}$ is clearly a right inverse of~$s$.
To find a left inverse, consider a commutative diagram as in Proposition~\ref{prop:pseudo_comm}:
\[
\xymatrix{ 
g \ar@/_/[d]_u^\sim \ar[r]^-r &
 h \ar[r]^-s  &
   {\ell}  \\
\bullet \ar[r]^-{s'} &
  \bullet \ar[r]^-{r'} & \bullet   \ar@/_/[u]_v^\sim
}
\]
Here $s'$ is a twist of~$s$, $r'$ is a twist of~$r$, and $u$ and $v$ are invertible.
Since $sr$ is invertible, so is $r's'$. 
Hence $(r's')^{-1}r'$ is a left inverse of~$s'$. 
Since twisting is an equivalence of categories, this shows that $s$ also has a left inverse.
\end{proof}

\subsection{Homogeneous ideals}

\begin{defi}
A \emph{homogeneous ideal} $\mathcal I$ of~$\mathcal R$ is a ($\Z$-linear categorical) ideal of morphisms of~$\mathcal R$ which is closed under tensoring with arbitrary morphisms of~$\mathcal R$. 
In other words, it is a collection of subgroups $\mathcal I(g,h)\subseteq \mathcal R(g,h)$ for all $g,h\in \mathcal G$, satisfying the two properties
\begin{enumerate}
\item $\mathcal R(h,h') \circ \mathcal I(g,h)\circ \mathcal R(g',g) \subseteq \mathcal I(g',h')$
\item $\mathcal R(g',h')\otimes \mathcal I(g,h) \subseteq \mathcal I(g'\otimes g, h'\otimes h)$
and $\mathcal I(g,h) \otimes \mathcal R(g',h') \subseteq \mathcal I(g\otimes g', h\otimes h')$
\end{enumerate}
for all $g',h'\in \mathcal G$. 
\end{defi}

We note that as in a symmetric 2-ring there are no direct sums all maps are ``homogeneous of some degree'', so we do not need to impose a homogeneity condition on generators, as is the case for usual graded rings (cf.\ Example \ref{ex:comm_2_rings}~(3)).
Nonetheless, here as elsewhere we have borrowed the terminology -- along with the intuition -- from graded rings.

\begin{remarks}
\label{rem:ideals}
We collect here some immediate observations on homogeneous ideals.
\begin{enumerate}

\item 
It follows from the essential smallness of $\mathcal R$ that there is only a set of homogeneous ideals of~$\mathcal R$.

\item If $\mathcal I$ is a homogeneous ideal of~$\mathcal R$, then $\otimes$ descends to a tensor structure on the additive quotient category $\mathcal R/\mathcal I$. The quotient functor $\mathcal R\to \mathcal R/\mathcal I$ is symmetric monoidal and thus preserves invertible objects, so $\mathcal R/\mathcal I$ is again a graded commutative 2-ring.

Moreover, the homogenous ideals of $\mathcal R/\mathcal I$ are in bijection with the homogeneous ideals of $\mathcal R$ containing~$\mathcal I$, in the usual way.
Conversely, the kernel (on maps) of any morphism $\mathcal R\to \mathcal R'$ of graded commutative 2-rings is a homogeneous ideal.

\item A homogeneous ideal is proper iff it contains no isomorphism, iff it does not contain any identity map, iff it does not contain the identity map of~$\unit$.

\item If the homogeneous ideal $\mathcal I$ contains a morphism $r$ then it also contains all the translates of~$r$.

\item 
Since duals are translates (Example \ref{ex:dual_translate}), we see in particular that every homogeneous ideal is self-dual: $\mathcal I=\mathcal I^\vee$.

\item
It follows from the factorizations 
\[
\xymatrix{
 g g' \ar[d]_{r {g'}} \ar[dr]|{r r'} \ar[r]^-{g  r'} &
  g  h' \ar[d]^{r  {h'}} \\
 h g' \ar[r]_-{h  r' \; } & h h'
}
\]
of $r\otimes r'$, for any $r\in \mathcal R(g,h)$ and $r'\in \mathcal R(g',h')$,
that an ideal $\mathcal I$ of $\mathcal R$ is closed under tensoring with arbitrary maps iff it is closed under tensoring with arbitrary objects (\ie, with identity maps). By symmetry, it suffices that this holds on one side, \ie\ that  $g\otimes r \in \mathcal I$ for all $g\in \mathcal G$ and $r\in \mathcal I$. 

\item Since all objects of $\mathcal R$ are invertible, we see from the commutative square
\[
\xymatrix{
g^\vee  g  h \ar[r]^-{g^\vee  g  r } \ar[d]_{\varepsilon_{g}\,h}^\sim &
 g^\vee  g \ell \ar[d]_\sim^{\varepsilon_{g}\, \ell} \\
h \ar[r]^-r &
 \ell
}
\]
and (6) (and the associativity of~$\otimes$) that the second condition in the definition of a homogeneous ideal is equivalent to
\[
r\in \mathcal I \; \Leftrightarrow \; g\otimes r \in \mathcal I 
\quad  \textrm{ for all maps } r \textrm{ and objects } g 
\]
and also to
\[
r\in \mathcal I \; \Rightarrow \; g\otimes r \in \mathcal I 
\quad  \textrm{ for all maps } r \textrm{ and objects } g \,. 
\]
\end{enumerate}
These remarks will be used repeatedly without further mention.
\end{remarks}

The set of all homogeneous ideals of $\mathcal R$ forms a poset with respect to inclusion. This poset is in fact a complete lattice, where meets $\bigwedge_\lambda \mathcal I_\lambda = \bigcap_\lambda  \mathcal I_\lambda$ are just intersections, and therefore joins can be given by 
$\bigvee_\lambda \mathcal I_\lambda = \bigcap_\lambda \{ \mathcal J \mid \mathcal I_\lambda \subseteq \mathcal J  \}$.
A much more useful formula is provided by the following lemma.

\begin{lemma}
\label{lemma:unions}
The join of any family $\{\mathcal I_\lambda\}_{\lambda\in \Lambda}$ of homogeneous ideals of~$\mathcal R$ is given by their Hom-wise sum
$\left(\bigvee_\lambda \mathcal I_\lambda\right)(g,h) =\sum_\lambda \mathcal I_\lambda (g,h) $ for all $g,h\in \mathcal G$.
\end{lemma}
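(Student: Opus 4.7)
The plan is to define $\mathcal{J}$ to be the putative join by setting $\mathcal{J}(g,h) := \sum_\lambda \mathcal{I}_\lambda(g,h)$ for all $g,h\in \mathcal{G}$, where the sum is the subgroup of $\mathcal{R}(g,h)$ generated by all the $\mathcal{I}_\lambda(g,h)$. I would then verify that $\mathcal{J}$ is a homogeneous ideal and that it is the least one containing every $\mathcal{I}_\lambda$. The second statement is essentially formal: any homogeneous ideal $\mathcal{K}$ containing every $\mathcal{I}_\lambda$ must, being closed under (finite) addition at each Hom-group, contain $\sum_\lambda \mathcal{I}_\lambda(g,h) = \mathcal{J}(g,h)$; combined with the evident inclusions $\mathcal{I}_\lambda \subseteq \mathcal{J}$ this forces $\mathcal{J} = \bigvee_\lambda \mathcal{I}_\lambda$.

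The work is thus concentrated in checking the three clauses of the definition of a homogeneous ideal for~$\mathcal{J}$. Each $\mathcal{J}(g,h)$ is a sum of subgroups and hence a subgroup of $\mathcal{R}(g,h)$. For closure under pre- and post-composition with morphisms of $\mathcal{R}$, I would take a typical element of $\mathcal{J}(g,h)$, which by definition is a finite sum $r = r_{\lambda_1} + \cdots + r_{\lambda_n}$ with $r_{\lambda_i}\in \mathcal{I}_{\lambda_i}(g,h)$, and apply $\Z$-bilinearity of composition: for any $a \in \mathcal{R}(h,h')$ and $b \in \mathcal{R}(g',g)$,
\[
a \circ r \circ b = \sum_i a \circ r_{\lambda_i} \circ b,
\]
with each summand in the ideal $\mathcal{I}_{\lambda_i}(g',h')$, and hence the total in $\mathcal{J}(g',h')$. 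Closure under tensoring with arbitrary morphisms is the same argument with $\otimes$ in place of $\circ$, using the $\Z$-bilinearity of $\otimes$ on morphisms (equivalently, by invoking Remark \ref{rem:ideals}(6), it suffices to check closure under tensoring with objects~$\ell\in \mathcal{G}$, where $\ell\otimes r = \sum_i \ell\otimes r_{\lambda_i}$ and each summand lies in $\mathcal{I}_{\lambda_i}(\ell\otimes g,\ell\otimes h)$).

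There is no real obstacle here; the only thing to watch is that we are genuinely using the additivity of the tensor product on morphisms (part of the hypothesis that $\mathcal{R}$ is a symmetric monoidal $\Z$-category), without which the Hom-wise sum need not be closed under~$\otimes$.
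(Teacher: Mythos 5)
Your proof is correct and follows essentially the same route as the paper: show that the Hom-wise sum $\mathcal{J}(g,h)=\sum_\lambda\mathcal{I}_\lambda(g,h)$ is a homogeneous ideal by distributing composition and twisting over finite sums (using $\Z$-bilinearity of $\circ$ and $\otimes$), then observe it is automatically the least upper bound. The paper's proof is marginally more terse but the decomposition, the appeal to additivity of $\otimes$ on morphisms, and the reduction via Remark~\ref{rem:ideals}(6) all match.
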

\begin{proof}
It suffices to prove that the Hom-wise sum of the $\mathcal I_\lambda$'s is a homogeneous ideal, since any other homogeneous ideal containing the $\mathcal I_\lambda$'s will have to contain it as well. 
Let $g,h\in \mathcal G$. By definition, $\sum_\lambda \mathcal I_\lambda (g,h)$ is a subgroup of $\mathcal R(g,h)$. Moreover, if $r_{\lambda_1}+\ldots +r_{\lambda_n}\in \sum_\lambda \mathcal I_\lambda (g,h)$ is an arbitrary element then its twists $g\otimes (r_{\lambda_1}+\ldots +r_{\lambda_n})= g\otimes r_{\lambda_1}+\ldots +g\otimes r_{\lambda_n}$ and its multiples $s (r_{\lambda_1}+\ldots +r_{\lambda_n})=sr_{\lambda_1}+\ldots +sr_{\lambda_n}$ and $(r_{\lambda_1}+\ldots +r_{\lambda_n})t= r_{\lambda_1}t+\ldots +r_{\lambda_n}t$ are again in $\sum_\lambda \mathcal I_\lambda (g,h)$, since each $\mathcal I_\lambda$ is closed under these operations. Thus $\bigcup_{g,h}\sum_\lambda \mathcal I_\lambda (g,h)$ is an ideal as claimed completing the proof.
\end{proof}

Therefore we will also write $\cap$ for meets and $+$ or $\sum$ for joins.

\begin{prop}
\label{prop:principal}
Let $r\colon g\to h$ be any morphism of $\mathcal R$. The principal homogeneous ideal $\langle r\rangle\subseteq \mathcal R$ generated by~$r$ admits the following explicit description:
for all $g',h'\in \mathcal G$, its component $\langle r\rangle (g',h')$ consists precisely of the morphisms of the form $s(\ell \otimes r)u$ for some object $\ell \in \mathcal G$, some isomorphism $u\colon g'\stackrel{\sim}{\to} \ell \otimes g$ and some morphism $s\colon \ell\otimes h\to h'$. Dually, it also consists precisely of the morphisms of the form $v(r \otimes k)t$ for some object $k\in \mathcal G$, map $t\colon g'\to g \otimes k $ and isomorphism $v\colon h\otimes k\stackrel{\sim}{\to} h'$.
\end{prop}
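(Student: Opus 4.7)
Denote by $J(g', h')$ the set of morphisms $g' \to h'$ of the form $s(\ell \otimes r) u$ as in the first half of the statement, and let $J$ be the corresponding collection across all pairs of objects. The plan is to show that $J$ is a homogeneous ideal of $\mathcal{R}$ containing $r$ and contained in $\langle r \rangle$, which by minimality of the latter forces $J = \langle r \rangle$. The containment $J \subseteq \langle r \rangle$ is immediate, and $r$ itself lies in $J(g, h)$ by taking $\ell = \unit$, $u = \lambda_g^{-1}$ and $s = \lambda_h$ (naturality of $\lambda$). It remains to check that $J$ is a homogeneous ideal.

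The main technical step is showing that $J(g', h')$ is closed under addition. I plan to do this by normalizing all representatives. Fix once and for all an object $\ell_0 \in \mathcal{G}$ with $\ell_0 \otimes g \cong g'$ (for instance $\ell_0 := g' \otimes g^\vee$), together with a chosen isomorphism $u_0\colon g' \stackrel{\sim}{\to} \ell_0 \otimes g$. I claim that every $f = s(\ell \otimes r) u \in J(g', h')$ can be rewritten as $s'(\ell_0 \otimes r) u_0$. First, since $\mathcal{G}$ is a 2-group there is an isomorphism $\varphi\colon \ell \stackrel{\sim}{\to} \ell_0$, so we may replace $\ell$ by $\ell_0$ by absorbing $\varphi \otimes g$ into $u$ and $(\varphi \otimes h)^{-1}$ into $s$. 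So assume $\ell = \ell_0$ and write $u = \psi u_0$ with $\psi := u u_0^{-1} \in \Aut(\ell_0 \otimes g)$. The key move is now to slide $\psi$ past $\ell_0 \otimes r$: since $\ell_0 \otimes g$ is invertible, the tensor equivalence $- \otimes (\ell_0 \otimes g)\colon \mathcal{R} \to \mathcal{R}$ induces a group isomorphism $\Aut(\unit) \stackrel{\sim}{\to} \Aut(\ell_0 \otimes g)$, so $\psi = \lambda(\tilde\psi \otimes \ell_0 \otimes g)\lambda^{-1}$ for a unique $\tilde\psi \in \Aut(\unit)$. Naturality of $\lambda$ and bifunctoriality of $\otimes$ then give $(\ell_0 \otimes r) \psi = \psi' (\ell_0 \otimes r)$ where $\psi' := \lambda(\tilde\psi \otimes \ell_0 \otimes h)\lambda^{-1} \in \Aut(\ell_0 \otimes h)$, and hence $f = (s \psi')(\ell_0 \otimes r) u_0$. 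Consequently $J(g', h')$ coincides with the image of the group homomorphism $s' \mapsto s'(\ell_0 \otimes r) u_0$ from $\mathcal{R}(\ell_0 \otimes h, h')$, and is therefore an additive subgroup of $\mathcal{R}(g', h')$.

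Closure of $J$ under post-composition is direct, and closure under tensoring with an object $k$ is immediate from $k \otimes s(\ell \otimes r) u = (k \otimes s)((k \otimes \ell) \otimes r)(k \otimes u)$. For pre-composition by an arbitrary $f\colon g'' \to g'$, I plan to invoke Proposition \ref{prop:pseudo_comm} applied to the composable pair $uf$ and $\ell \otimes r$: this produces a factorization $(\ell \otimes r)(uf) = \widetilde{uf} \cdot \widetilde{\ell \otimes r}$ in which $\widetilde{\ell \otimes r}$ is a translate of $\ell \otimes r$. By Lemma \ref{lemma:translate}, every such translate has the form $\alpha \cdot (\ell' \otimes r) \cdot \beta$ with $\ell' \in \mathcal{G}$ and $\alpha, \beta$ isomorphisms, where $\beta\colon g'' \stackrel{\sim}{\to} \ell' \otimes g$. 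Rearranging yields $s(\ell \otimes r) u f = (s \widetilde{uf} \alpha)(\ell' \otimes r) \beta \in J(g'', h')$, as required.

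I expect the sliding step inside the normalization to be the main obstacle: it reflects the centrality of $\End_\mathcal{R}(\unit)$ acting through the invertibility of every object, and it is precisely what is absent from a plain additive tensor category and so what makes the graded commutative 2-ring structure really intervene. The dual description $v(r \otimes k) t$ then follows by a completely parallel argument, normalizing on the target side with a fixed $k_0 \in \mathcal{G}$ satisfying $h \otimes k_0 \cong h'$ and a chosen iso $v_0\colon h \otimes k_0 \stackrel{\sim}{\to} h'$.
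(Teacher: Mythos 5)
Your proof is correct and follows the same overall blueprint as the paper's: define the putative ideal $J$, observe $J\subseteq\langle r\rangle$ and $r\in J$, then verify the ideal axioms directly, with Proposition~\ref{prop:pseudo_comm} carrying the pre-composition step. The one genuinely different move is your treatment of closure under sums. The paper compares two arbitrary representatives $s(\ell\otimes r)u$ and $\tilde s(\tilde\ell\otimes r)\tilde u$ directly: it finds an isomorphism $\tilde\ell\cong\ell$, reduces to a common $\ell$, and then invokes faithfulness of $-\otimes g$ to see that the ratio of the two source-isomorphisms has the form $\psi\otimes g$ for some $\psi\in\Aut(\ell)$, which is then absorbed into the $s$-factor. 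You instead normalize \emph{all} representatives to a single fixed pair $(\ell_0,u_0)$ by pulling the residual automorphism of $\ell_0\otimes g$ back to $\Aut(\unit)$ via the endo-equivalence $-\otimes(\ell_0\otimes g)$ and then sliding it past $\ell_0\otimes r$ using naturality of $\lambda$ and bifunctoriality of $\otimes$. This is the same underlying mechanism (faithfulness/fullness of a twist equivalence detects the automorphism at a smaller-degree source), but your version has the pleasant by-product of identifying $J(g',h')$ as the image of the additive map $s'\mapsto s'(\ell_0\otimes r)u_0$, so closure under sums (and under the zero map) is immediate. Your pre-composition step applies Proposition~\ref{prop:pseudo_comm} to the pair $(uf,\,\ell\otimes r)$ rather than, as the paper does, to $(t,\,r')$ with $r'$ the full morphism $s(\ell\otimes r)u$; both work, and yours has the slight advantage of keeping the non-invertible $s$ out of the translated factor. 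One small caveat: the phrase ``since $\mathcal G$ is a 2-group'' for the existence of $\varphi\colon\ell\stackrel{\sim}{\to}\ell_0$ is a bit glib --- the operative fact is that $g$ is invertible, so $-\otimes g$ reflects isomorphisms (though in a graded commutative 2-ring these amount to the same thing, and your explicit choice $\ell_0=g'\otimes g^\vee$ makes the point clear).
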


\begin{proof}
We only prove the first claim, the second one being dual. 
Let $\mathcal I(g',h')$ be the set of maps $g'\to h'$ of the form $s(\ell \otimes r)u$, as above. 
It suffices to show that $\mathcal I:= \bigcup_{g',h'}\mathcal I(g',h')$ is a homogeneous ideal containing~$r$, since clearly $\mathcal I\subseteq \langle r\rangle$.
Note that $r$ has the required form, for instance because of the commutative square 
\[
\xymatrix{
\unit g \ar[d]^\lambda_\sim  \ar[r]^-{\unit r } & \unit h \ar[d]_\sim^\lambda \\
 g\ar[r]^-r & h\,.
  }
\]
Moreover $\mathcal I$ is evidently closed under twists and under compositions on the left, hence it remains only to show that it is closed under sums and compositions on the right.

Thus consider $r':= s(\ell\otimes r)u\in \mathcal I (g', h')$, with $u$ invertible, and let $t\in \mathcal R(g'', g')$ be some map. 
By Proposition \ref{prop:pseudo_comm}, their composition $r'  t$ is equal to $t'  r''$ for some translate $t'$ of $t$ and some translate $r''$ of~$r'$. 
But the latter is immediately seen to be a morphism of the form $\tilde t (\tilde{\ell}\otimes r) \tilde u$ with~$\tilde u$ invertible, so it belongs to $\mathcal I(g'', h')$. This shows that $\mathcal I$ is closed under composition on the right.

Finally, let 
\[
r'= \bigg(
 \xymatrix{ g' \ar[r]^-u_-\sim & \ell\otimes g \ar[r]^-{\ell\otimes r} & \ell \otimes h \ar[r]^-{s} & h'  } 
 \bigg)
\]
and 
\[
r''=\bigg( 
\xymatrix{ g' \ar[r]^-{\tilde u}_-\sim & \tilde \ell\otimes g \ar[r]^-{\tilde \ell \otimes r} & \tilde \ell \otimes h \ar[r]^-{\tilde s} & h'  } \bigg)
\]
be two morphisms in $\mathcal I(g',h')$. Since $u$ and $\tilde u$ are isomorphisms and $g$ is tensor-invertible, we deduce the existence of an isomorphism $\varphi\colon \tilde \ell \cong g'\otimes g^\vee \cong \ell$ between $\tilde \ell$ and~$\ell$, which we can use to constuct a commutative diagram
\[
\xymatrix@R=7pt{
& \tilde \ell\otimes g \ar[dd]^{\varphi\otimes g} \ar[r]^-{\tilde \ell \otimes r}  &
 \tilde \ell \otimes h \ar[dr]^-{\tilde s} \ar[dd]^{\varphi \otimes h} & \\
g' \ar[ru]^-{\tilde u} \ar[rd]_-{v\, :=} &
  &
   & h' \\
 & \ell \otimes g \ar[r]^-{\ell \otimes r} & \ell \otimes h \ar[ur]_-{=:\,t}
  & 
}
\]
where $v$ is again an isomorphism. Thus we may write $r''= t(\ell \otimes r)v$. 
Since $-\otimes g$ is an endo-equivalence of~$\mathcal R$, the automorphism $uv^{-1}\colon \ell \otimes g\stackrel{\sim}{\to} \ell \otimes g$ must have the form $\psi \otimes g$ for some automorphism $\psi$ of~$\ell$.
We thus obtain a commutative diagram
\[
\xymatrix@R=7pt{
&  \ell\otimes g \ar[dd]^{\psi\otimes g}_\sim \ar[r]^-{ \ell \otimes r}  &
  \ell \otimes h \ar[r]^-{t} \ar[dd]^{\psi \otimes h}_\sim &
  h' \\
g' \ar[ru]^-{v} \ar[rd]_-{u} &
  &
   & \\
 & \ell \otimes g \ar[r]^-{\ell \otimes r} & \ell \otimes h \ar[r]^-{ s}
  & h'
}
\]
where the upper composition $g'\to h'$ is $r''$ and the lower one is~$r'$.
Now we may use it to compute
\begin{align*}
r'+r''
& = s(\ell \otimes r)u + t(\ell \otimes r)v \\
& = s(\ell \otimes r)u + t (\psi \otimes h)^{-1} (\ell \otimes r)u \\
& = (s+ t(\psi \otimes h)^{-1}) (\ell \otimes r) u \\
& \in \mathcal I(g',h') \,,
\end{align*}
which shows that $\mathcal I(g',h')$ is closed under sums.
Since clearly it also contains the zero map, this concludes the proof that $\mathcal I$ is a homogeneous ideal.
\end{proof}

We now introduce one last family of homogeneous ideals.

\begin{defi}\label{ex:spectrum}
The \emph{annihilator} of a morphism~$s$ of~$\mathcal R$, denoted by $\Ann_\mathcal R(s)$, is the homogeneous ideal of $\mathcal R$ generated by all the $r\in \mathcal R$ such that $r\circ s=0$.
\end{defi}

This definition looks as if it should be called the \emph{left} annihilator of~$s$, but the next lemma shows that, as for commutative rings, there is actually no difference between left and right annihilators.

\begin{prop}
\label{prop:ann}
The annihilator of $s\in \mathcal R$ has the explicit descriptions 
\begin{align*}
\Ann_{\mathcal R}(s) &=\{r\in \Mor \mathcal R \mid \exists \textrm{ a translate }\tilde r\textrm{ of } r \textrm{ s.t.\ } \tilde rs=0  \} \\
&= \{r\in \Mor \mathcal R \mid \exists g\in \mathcal G \textrm{ and } \exists \textrm{ an isomorphism } u \textrm{ s.t.\ }  (g\otimes r)us=0  \}
\end{align*}
and
\begin{align*}
\Ann_{\mathcal R}(s) &=\{r\in \Mor \mathcal R \mid \exists \textrm{ a translate }\tilde r\textrm{ of } r \textrm{ s.t.\ } s\tilde r=0  \} \\
&= \{r\in \Mor \mathcal R \mid \exists g\in \mathcal G \textrm{ and } \exists \textrm{ an isomorphism } u \textrm{ s.t.\ }  s u (g\otimes r)=0  \} \,.
\end{align*}
In particular, by symmetry, $\Ann_\mathcal R(s)$ is also equal to the homogenous ideal of $\mathcal R$ generated by the maps $r\in \mathcal R$ such that $sr=0$.
\end{prop}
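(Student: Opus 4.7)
The plan is to establish all four displayed identities by proving that the set
$A := \{r \in \Mor \mathcal R \mid \exists\, g \in \mathcal G, \exists\, u \text{ iso},\; (g\otimes r) u s = 0\}$
coincides with $\Ann_\mathcal R(s)$, and then to obtain the ``$s$-on-the-right'' versions by a left-right symmetry argument. The equivalence of the ``translate'' and ``twist-plus-iso'' descriptions inside each display is immediate from Lemma \ref{lemma:translate}: every translate has the form $u(g\otimes r)v$ with $u,v$ isomorphisms, and the outer iso $u$ plays no role in the vanishing of the composite with $s$. The easy containment $A\subseteq \Ann_\mathcal R(s)$ is then formal: if $\tilde r s=0$, then $\tilde r$ is a generator of $\Ann_\mathcal R(s)$, and since $r$ is a translate of $\tilde r$ we have $\langle r\rangle=\langle \tilde r\rangle\subseteq \Ann_\mathcal R(s)$.

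For the reverse containment I would show that $A$ is a homogeneous ideal trivially containing every $r$ with $rs=0$. Closure under twisting is handled by replacing the witnessing $g$ with $g\otimes h^\vee$ using the canonical iso $(g\otimes h^\vee)\otimes h\cong g$, and closure under left composition is immediate from $(g\otimes tr)vs=(g\otimes t)(g\otimes r)vs=0$. The first interesting step is closure under sums: given parallel $x_1,x_2\in A$ with witnesses $(g_i,v_i)$, one first standardizes to a common $g$ (since $g_i\cong t\otimes\alpha^\vee$ is determined up to iso by the common source $\alpha$), and the difference $\psi:=v_2 v_1^{-1}$ is an automorphism of $g\otimes\alpha$. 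Here I invoke the crucial observation that, because every object of $\mathcal R$ is $\otimes$-invertible, the canonical map $\End(\unit)\to\End(g\otimes\alpha)$ is an isomorphism and hence every automorphism is of the form $\lambda\cdot\id$ with $\lambda\in\End(\unit)^\times$. Thus $v_2=\lambda v_1$, and the vanishing $(g\otimes x_2)v_2 s=\lambda\,(g\otimes x_2)v_1 s=0$, combined with invertibility of $\lambda$, forces $(g\otimes x_2)v_1 s=0$, so $v_1$ is a common witness and $(g\otimes (x_1+x_2))v_1 s=0$.

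The main obstacle, which I would tackle last, is closure under right composition. I would handle it by first reformulating $A$: applying $g^\vee\otimes -$ to $(g\otimes r)vs=0$ and using that $\varepsilon_g$ yields a natural iso $g^\vee\otimes g\otimes(-)\cong\id$, one shows that $r\in A$ iff $r\sigma=0$ for some translate $\sigma$ of $s$ with target $\mathrm{src}(r)$. The same scalar-automorphism argument as above then upgrades this, since translates of $s$ with a fixed target differ only by scalar multiplication by units of $\End(\unit)$: whenever $r$ annihilates one such translate, it annihilates \emph{every} translate of $s$ with target $\mathrm{src}(r)$. Now for $q\colon\gamma\to\alpha=\mathrm{src}(r)$, pick any translate $\sigma'$ of $s$ with target $\gamma$ and apply Proposition \ref{prop:pseudo_comm} to the composable pair $(\sigma',q)$ to write $q\sigma'=\sigma''q''$, where $\sigma''$ is a translate of $\sigma'$ (hence of $s$) with target $\alpha$ on the nose, and $q''$ is a translate of $q$. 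Since $r$ kills every such translate, $r\sigma''=0$, so $(rq)\sigma'=r\sigma''q''=0$ and therefore $rq\in A$. The remaining ``$s$-on-the-right'' displays and the concluding ``in particular'' statement follow by left-right symmetry: Proposition \ref{prop:pseudo_comm} converts any identity $\tilde r s=0$ into $s'\tilde r'=0$ for translates $s',\tilde r'$, and the same $\varepsilon$-trick pulls $s'$ back to $s$ to exhibit a translate of $r$ annihilated by $s$ on the left.
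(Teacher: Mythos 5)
Your proof is correct and follows the paper's overall strategy --- show that the right-hand side is a homogeneous ideal containing the generators of $\Ann_\mathcal R(s)$ --- but differs in two of the substantive verifications. For closure under sums, the paper observes that the automorphism $v_1 w^{-1}$ of $\ell_1\otimes g$ lies in the image of $-\otimes g\colon \End(\ell_1)\to\End(\ell_1\otimes g)$, i.e.\ has the form $\psi\otimes g$, and then commutes $\psi$ past $\ell_1\otimes r_2$; you go a step further and note that $\End(\unit)\cong\End(y)$ for any invertible object $y$, so the automorphism in question is a central unit $\lambda\cdot\id$, which tidies the computation. For closure under right composition, the paper extracts from Proposition~\ref{prop:pseudo_comm} a short ``Claim'' (swapping the side on which the translate of $a$ appears) and applies it twice; you instead reformulate the defining condition so that $s$ carries the twist and $r$ is untouched, upgrade (again via the scalar observation) to the statement that $r$ annihilates \emph{every} translate of $s$ whose target equals the source of $r$, and then apply pseudo-commutativity once. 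Both routes work and rest on the same two ingredients: Proposition~\ref{prop:pseudo_comm} and fully-faithfulness of tensoring with an invertible object. One small caveat: your intermediate assertion that ``translates of $s$ with a fixed target differ only by scalar multiplication by units of $\End(\unit)$'' is not literally correct as stated, since two such translates may have different sources; but the consequence you actually use --- that if $r$ annihilates one translate of $s$ with target the source of $r$ then it annihilates every such translate --- is true, and is established by the same scalar argument after first cancelling the inner isomorphism on the source side.
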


\begin{proof}
 Observe that the second equality in both parts of the statement is immediate from Lemma \ref{lemma:translate}. Thus it is sufficient to prove the first equality in each statement; as they are similar we only give a proof of the top one.

Let $\mathcal I:=\{r\in \Mor \mathcal R \mid \exists \textrm{ a translate }\tilde r\textrm{ of } r \textrm{ s.t.\ } \tilde rs=0  \}$.
Since $\Ann_\mathcal R(s)$ is a homogeneous ideal it is closed under translates and thus by definition it must contain~$\mathcal I$. 
To prove the reverse inclusion, it suffices to show that $\mathcal I$ is a homogeneous ideal. If $r$ is in $\mathcal{I}$ then $(g\otimes r)us=0$ for some object~$g$ and isomorphism~$u$. Given $h\in \mathcal{G}$ we see that $h\otimes r\in \mathcal{I}$ i.e., $\mathcal{I}$ is closed under left twists, by considering
\[
((g\otimes h^\vee) \otimes(h\otimes r))us
\cong (g\otimes r)us =0.
\]
Similarly we see that for a morphism $t$ which can be postcomposed with $r$ there is an equality
\[
(g\otimes (tr))us 
= (g\otimes t)(g \otimes r)us
=0
\,,
\]
showing that $\mathcal I$ is closed under left compositions.

To show that $\mathcal I$ is closed under sums we use the same reasoning as in the previous proposition.
Let $r_1,r_2\in \Ann_{\mathcal R}(s)(g,h)$. By definition of $\mathcal I(g,h)$ and by Lemma \ref{lemma:translate}, this means that there exist objects $\ell_1,\ell_2$ and isomorphisms $v_1,v_2$ such that $(\ell_1\otimes r_1)v_1s=0$ and $(\ell_2\otimes r_2)v_2s=0$. 
In particular $v_1$ and $v_2$ have the same domain, and since $g$ is tensor-invertible we deduce the existence of an isomorphism $\varphi\colon \ell_2\stackrel{\sim}{\to}\ell_1$. Hence we may define an isomorphism $w$ fitting into the following commutative diagram.
\[
\xymatrix@R=7pt{
&&  \ell_2\otimes g \ar[dd]_\sim^{\varphi\otimes g} \ar[r]^-{ \ell_2 \otimes r_2}  &
  \ell_2 \otimes h \ar[dd]_\sim^{\varphi \otimes h}  \\
\bullet \ar[r]^-s & \bullet \ar[ru]^-{v_2} \ar[rd]_-{w\, :=} &
  & \\
 && \ell_1 \otimes g \ar[r]^-{\ell_1 \otimes r_2} & \ell_1 \otimes h 
}
\]
Since  $(\ell_2\otimes r_2)v_2s=0$ by hypothesis, we deduce moreover that $(\ell_1 \otimes r_2)ws=0$.
And since $-\otimes g$ is an endo-equivalence of $\mathcal R$, the automorphism $v_1w^{-1}$ of $\ell_1\otimes g$ must have the form $\psi\otimes g$ for some automorphism $\psi$ of~$\ell_1$. Thus we obtain the following commutative diagram:
\[
\xymatrix@R=7pt{
&& \ell_1\otimes g \ar[dd]_\sim^{\psi \otimes g} \ar[r]^-{ \ell_1 \otimes r_2}  &
  \ell_1 \otimes h \ar[dd]_\sim^{\psi \otimes h}  \\
\bullet \ar[r]^-s & \bullet \ar[ru]^-{w} \ar[rd]_-{v_1} &
  & \\
 && \ell_1 \otimes g \ar[r]^-{\ell_1 \otimes r_2} & \ell_1 \otimes h 
}
\]
This allows us to compute
\begin{align*}
(\ell_1\otimes (r_1+r_2))v_1s
& = \underbrace{(\ell_1\otimes r_1)v_1s}_{0} + (\ell_1\otimes r_2)v_1s  \\
& = (\ell_1 \otimes r_2)(\psi\otimes g)ws \\
& = (\psi\otimes h) \underbrace{(\ell_1 \otimes r_2) ws}_{0} =0 \,,
\end{align*}
which shows that $r_1+r_2$ belongs to $\mathcal I$, as wished.

Finally, it remains to verify that $\mathcal I$ is also closed under composition on the right, and this follows easily from Proposition~\ref{prop:pseudo_comm}. 
More precisely, the following claim is an immediate consequence of Proposition~\ref{prop:pseudo_comm}:
\begin{description}
\item[Claim] For any two maps $a,b \in \mathcal R$, we have the following equivalence:\\
$a' b=0$ for some translate $ a'$ of~$a$
$\quad \Leftrightarrow\quad$
$b  a''=0$ for some translate $a''$ of~$a$.
\end{description}

Therefore, if we assume that $r's=0$ for some translate $ r'$ of~$r$, then $sr''=0$ for some (other) translate~$r''$ of~$r$, say $r''= w(\ell\otimes r)w'$ for an object~$\ell$ and isomorphisms $w,w'$. 
But this implies $sw(\ell \otimes r)=0$ and therefore also $sw(\ell \otimes rt)= sw(\ell \otimes r)(\ell \otimes t) =0$. In other words, we have $s a'$ for some translate $a'$ of $a:=rt$. By applying the claim once again, we see that $a''s=0$ for some other translate $a''$ of $rt$. This shows that $\mathcal I$ is closed under composition on the right, as required. 
Hence $\Ann_{\mathcal R}(s)=\mathcal I$.
\end{proof}

\subsection{Products of ideals}
In this subsection we explain how the lattice of homogeneous ideals in $\mathcal R$ --- let us denote it by $\Id(\mathcal R)$ --- is a \emph{commutative ideal lattice}, in the sense of Buan, Krause and Solberg~\cite{bks:ideal}.
This observation provides a quick and conceptual way of defining the Zariski spectrum of~$\mathcal R$.

\begin{lemma}
\label{lemma:products}
Let $\mathcal I,\mathcal J$ be two homogeneous ideals. Then their categorical product
\[
\mathcal I\circ \mathcal J = \{ s_1t_1+ \ldots + s_nt_n \mid s_1,\ldots,s_n\in \mathcal I , t_1,\ldots,t_n\in \mathcal J, n\geq0\}
\] 
and their tensor product
\[
\mathcal I\otimes \mathcal J = \langle \{ 
s\otimes t \mid s\in \mathcal I, t\in \mathcal J \}\rangle
\]
define the same homogeneous ideal, that we will simply denote by $\mathcal I\mathcal J$. It follows in particular that $\mathcal I\mathcal J=\mathcal J\mathcal I$.
\end{lemma}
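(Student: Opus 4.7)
The plan is to establish both set-theoretic equalities separately -- first checking that the composition description $\mathcal I \circ \mathcal J$ is a homogeneous ideal, then each of the two inclusions -- and to deduce the commutativity $\mathcal I \mathcal J = \mathcal J \mathcal I$ from the manifest symmetry of the tensor description $\mathcal I \otimes \mathcal J$.

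That $\mathcal I \circ \mathcal J$ is a homogeneous ideal is immediate: it is closed under sums by construction, and for composable $s_i \in \mathcal I$, $t_i \in \mathcal J$, any morphism $u$, and any object $\ell \in \mathcal G$, the identities $u(s_i t_i) = (u s_i) t_i$, $(s_i t_i) u = s_i (t_i u)$ and $\ell \otimes (s_i t_i) = (\ell \otimes s_i)(\ell \otimes t_i)$ show that composites on either side and left twists by objects still sit in $\mathcal I \circ \mathcal J$, using that $\mathcal I$ and $\mathcal J$ are themselves closed under composition and twisting. The inclusion $\mathcal I \otimes \mathcal J \subseteq \mathcal I \circ \mathcal J$ then reduces to generators: for $s \in \mathcal I(g, h)$ and $t \in \mathcal J(g', h')$, bifunctoriality of $\otimes$ yields the factorization $s \otimes t = (s \otimes \id_{h'}) \circ (\id_g \otimes t)$, and both factors are twists lying in $\mathcal I$ and $\mathcal J$ respectively.

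For the reverse inclusion $\mathcal I \circ \mathcal J \subseteq \mathcal I \otimes \mathcal J$, I would invoke Proposition \ref{prop:pseudo_comm}. Applied with $r := t \in \mathcal J(g, h)$ and $s \in \mathcal I(h, \ell)$, it produces isomorphisms $u$ and $v$ such that
\[
s \circ t \;=\; v \circ (\ell\, h^\vee\, t) \circ (s\, h^\vee\, g) \circ u.
\]
Two applications of bifunctoriality identify the middle composite with the single tensor product $(s \otimes \id_{h^\vee}) \otimes t$, which is a generator of $\mathcal I \otimes \mathcal J$: the first factor $s \otimes \id_{h^\vee}$ is a twist of $s \in \mathcal I$, and the second factor is $t \in \mathcal J$. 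Pre- and post-composing this generator with the isomorphisms $u$ and $v$ keeps it inside the ideal, so $s \circ t \in \mathcal I \otimes \mathcal J$, and hence any finite sum of such composites belongs to $\mathcal I \otimes \mathcal J$.

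The commutativity $\mathcal I \mathcal J = \mathcal J \mathcal I$ is then immediate from the tensor description: the symmetry $\gamma_{g, g'} \colon g \otimes g' \xrightarrow{\sim} g' \otimes g$ identifies each generator $s \otimes t$ of $\mathcal I \otimes \mathcal J$ with $t \otimes s$ up to pre- and post-composition with isomorphisms, so $\mathcal I \otimes \mathcal J$ and $\mathcal J \otimes \mathcal I$ share a generating set up to isomorphism and must coincide. The only genuinely non-formal ingredient is the appeal to pseudo-commutativity in the reverse inclusion; everything else is routine bookkeeping with bifunctoriality of $\otimes$ and the definition of a homogeneous ideal.
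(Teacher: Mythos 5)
Your proof is correct and follows essentially the same route as the paper: both establish that $\mathcal I\circ\mathcal J$ is a homogeneous ideal containing the generators $s\otimes t=(s\otimes\id)(\id\otimes t)$, both use the pseudo-commutativity diagram to rewrite a composite $st$ as an iso-conjugate of a tensor product of a translate in $\mathcal I$ with one in $\mathcal J$, and both get commutativity from the symmetry $\gamma$. The only cosmetic difference is which arm of that diagram one reads off — the paper's truncated diagram produces $u(s\otimes t^\vee\otimes g)v$ with $t^\vee\otimes g\in\mathcal J$, whereas your direct invocation of Proposition~\ref{prop:pseudo_comm} gives $v(s\otimes\id_{h^\vee}\otimes t)u$ with $s\otimes\id_{h^\vee}\in\mathcal I$; either factorization lands in $\mathcal I\otimes\mathcal J$.
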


\begin{proof}
Clearly $\mathcal I\circ \mathcal J$ is a homogeneous ideal, and it follows from $s\otimes t= (s\otimes \id)(\id \otimes t)$ that it contains $\mathcal I\otimes \mathcal J$. On the other hand, 
consider the following truncated version of the commutative diagram \eqref{eq:pseudo_comm}:
\begin{equation*} 
\xymatrix{
 *+[F]{g}  \ar[r]^-{t} \ar@/^4ex/[rr]^-{st} &
    {h}  \ar[r]^-{s} &
     *+[F]{\ell}  \\
 g\unit \ar[u]^\sim_{\rho} \ar[r]^-{t\unit} &
  h \unit  \ar[u]^{\rho} \ar[r]^-{s\unit} &
    \ell \unit  \ar[u]_-\sim^-{\rho}  \\
 gg^\vee g \ar[u]_{g \varepsilon}^\sim  \ar[r]^-{tg^\vee g} &
  h g^\vee g \ar[u]^{h \varepsilon} \ar[r]^-{sg^\vee g}  &
   *+[F]{ \ell g^\vee g } \ar[u]_\sim^{\ell \varepsilon}    \\
\unit g \ar[u]^\sim_{\eta g} \ar[r]_-\sim^-{\eta g} & 
 *+[F]{ hh^\vee g }  \ar[u]_{h t^\vee g} \ar@/_3ex/[ur]_{s \,\otimes\, t^\vee g}   &   
}
\end{equation*}
We read off its outer frame that every composition $st$ ($s\in \mathcal I, t\in \mathcal J$) has the form $u(s \otimes t^\vee \otimes g)v$ for some isomorphisms $u,v$ and is  therefore contained in the homogeneous ideal generated by the tensor products $s' \otimes t'$ ($s'\in \mathcal I, t'\in \mathcal J$), since $t^\vee \otimes g\in \mathcal J$.
Hence $\mathcal I\circ \mathcal J=\mathcal I\otimes \mathcal J$.
The symmetry $\mathcal I\otimes \mathcal J=\mathcal J\otimes \mathcal I$ is obvious from $s\otimes t=\gamma( t\otimes s)\gamma$.
\end{proof}

\begin{lemma}
\label{lemma:cpt_ideals}
The compact elements $\mathcal I\in \Id(\mathcal R)$ (\ie, those for which $\mathcal I\subseteq  \bigvee_\alpha \mathcal I_\alpha$ always implies $\mathcal I\subseteq  \bigvee_{\alpha'} \mathcal I_{\alpha'}$ for some finite subset of indices) are precisely the finitely generated ideals: $\mathcal I=\langle r_1,\ldots, r_n\rangle$.
\end{lemma}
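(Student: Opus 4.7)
The plan is to prove the two implications separately, with the key ingredient in both being Lemma~\ref{lemma:unions}, which computes arbitrary joins in $\mathrm{Id}(\mathcal R)$ by Hom-wise sums.

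\textbf{Finitely generated implies compact.} In any complete lattice, a finite join of compact elements is again compact, so it suffices to treat the principal case $\mathcal I = \langle r \rangle$ for a single morphism $r$ of $\mathcal R$. Suppose $\langle r \rangle \subseteq \bigvee_\alpha \mathcal I_\alpha$. By Lemma~\ref{lemma:unions} the right-hand side, evaluated on the appropriate Hom, is just the (internal) sum $\sum_\alpha \mathcal I_\alpha(g,h)$, so the morphism $r$ itself decomposes as a finite sum $r = r_{\alpha_1} + \cdots + r_{\alpha_n}$ with each $r_{\alpha_i} \in \mathcal I_{\alpha_i}$. Then $\langle r \rangle \subseteq \langle r_{\alpha_1}, \ldots, r_{\alpha_n}\rangle \subseteq \mathcal I_{\alpha_1} + \cdots + \mathcal I_{\alpha_n}$, which is the required finite subjoin.

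\textbf{Compact implies finitely generated.} I would first observe that every homogeneous ideal can be written as a (typically infinite) join of principal ideals, namely
\[
\mathcal I \;=\; \bigvee_{r \in \mathcal I} \langle r \rangle.
\]
Indeed, each $\langle r\rangle$ is contained in $\mathcal I$, so the join is contained in $\mathcal I$; conversely, every $r \in \mathcal I$ belongs to $\langle r \rangle$ and hence to the join. Now if $\mathcal I$ is compact, applying the compactness hypothesis to this very expression yields finitely many $r_1, \ldots, r_n \in \mathcal I$ such that
\[
\mathcal I \;\subseteq\; \langle r_1 \rangle + \cdots + \langle r_n \rangle \;=\; \langle r_1, \ldots, r_n \rangle \;\subseteq\; \mathcal I,
\]
so $\mathcal I = \langle r_1, \ldots, r_n\rangle$ is finitely generated.

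No step here seems to present a genuine obstacle: once Lemma~\ref{lemma:unions} is available, the argument is the standard lattice-theoretic one showing that finitely generated elements in an algebraic lattice coincide with the compact elements. The only piece of substantive content specific to graded commutative 2-rings is already encoded in Lemma~\ref{lemma:unions}, and everything else is purely formal.
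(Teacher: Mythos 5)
Your argument is correct and is essentially the same as the paper's (one-line) proof: both directions rest on the Hom-wise description of joins from Lemma~\ref{lemma:unions}, and the converse direction uses the decomposition $\mathcal I = \sum_{r\in\mathcal I}\langle r\rangle$ in exactly the way the paper does. You have simply spelled out the details the paper leaves implicit, including the reduction of the forward direction to the principal case.
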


\begin{proof}
If $\mathcal I$ is finitely generated, then it is compact by the sum description of joins.
Conversely, as $\mathcal I= \sum_{r\in \mathcal I}\langle r\rangle$ then if $\mathcal I$ is compact it must be finitely generated.
\end{proof}

\begin{prop}
The poset $\Id(\mathcal R)$ of homogeneous ideals of~$\mathcal R$, ordered by inclusion and equipped with the pairing $(\mathcal I,\mathcal J)\mapsto \mathcal I\mathcal J$, 
is an ideal lattice.
\end{prop}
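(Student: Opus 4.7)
The plan is to verify the axioms of a commutative ideal lattice in the sense of Buan--Krause--Solberg: namely, that $(\Id(\mathcal R),\subseteq)$ is a complete lattice, that $(\mathcal I,\mathcal J)\mapsto \mathcal I\mathcal J$ defines a commutative, associative, unital binary operation that distributes over arbitrary joins, that every ideal is a join of compact ones, and that the compact ideals form a set which is closed under the product.

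First I would dispatch the purely lattice-theoretic claims that are already available: completeness follows from Lemma~\ref{lemma:unions} (meets are intersections, joins are Hom-wise sums), and Remark~\ref{rem:ideals}(1) together with Lemma~\ref{lemma:cpt_ideals} gives that the compact ideals form a \emph{set}. The identity for the product is the whole category $\mathcal R$, since $\mathcal R\mathcal I\supseteq \{\id \circ r \mid r \in \mathcal I\}= \mathcal I$ and the reverse inclusion is immediate from the ideal property of $\mathcal I$. Commutativity of the product is exactly Lemma~\ref{lemma:products}.

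Next I would establish associativity. Using the ``composition'' description $\mathcal I\circ\mathcal J$ from Lemma~\ref{lemma:products}, both $(\mathcal I\mathcal J)\mathcal K$ and $\mathcal I(\mathcal J\mathcal K)$ consist of finite sums of triple composites $r_i s_j t_k$ with $r_i\in\mathcal I$, $s_j\in\mathcal J$, $t_k\in\mathcal K$, so the equality reduces to associativity of categorical composition. Similarly, distributivity $\mathcal I\big(\sum_\lambda \mathcal J_\lambda\big)=\sum_\lambda \mathcal I\mathcal J_\lambda$ is verified by applying Lemma~\ref{lemma:unions} (the join is a Hom-wise sum) together with the composition description: any finite sum $\sum_i r_i (\sum_\lambda s_{i,\lambda})$ with $r_i\in\mathcal I$ and $s_{i,\lambda}\in\mathcal J_\lambda$ equals $\sum_i\sum_\lambda r_i s_{i,\lambda}$ and conversely. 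That every ideal is the join of its principal sub-ideals, $\mathcal I=\sum_{r\in\mathcal I}\langle r\rangle$, follows tautologically from Lemma~\ref{lemma:unions}, so $\Id(\mathcal R)$ is compactly generated by Lemma~\ref{lemma:cpt_ideals}.

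The main point that still needs attention is that the compact elements are closed under the product. Given two finitely generated ideals $\mathcal I=\langle r_1,\ldots,r_n\rangle$ and $\mathcal J=\langle s_1,\ldots,s_m\rangle$, I would argue that $\mathcal I\mathcal J=\langle r_i\otimes s_j\mid 1\le i\le n,\,1\le j\le m\rangle$, which is again finitely generated. Using the $\otimes$-description of the product from Lemma~\ref{lemma:products}, this amounts to showing that every generator $s\otimes t$ with $s\in \mathcal I$, $t\in\mathcal J$ already lies in $\langle r_i\otimes s_j\rangle$; applying Proposition~\ref{prop:principal} twice to express $s$ and $t$ as translates of the $r_i$ and $s_j$ respectively, and using bifunctoriality of~$\otimes$ to distribute these descriptions, reduces the claim to closure of the ideal $\langle r_i\otimes s_j\rangle$ under twists and compositions, which holds by construction. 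Combining all of the above yields that $\Id(\mathcal R)$ is an ideal lattice as claimed.
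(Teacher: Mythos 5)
Your proposal is correct and follows the same route as the paper: verify the Buan–Krause–Solberg axioms using the completeness and compact-generation of $\Id(\mathcal R)$ (via Lemmas~\ref{lemma:unions} and~\ref{lemma:cpt_ideals}), the commutativity of the product (Lemma~\ref{lemma:products}), and the closure of compacts under products via $\langle r_1,\ldots,r_n\rangle\langle s_1,\ldots,s_m\rangle = \langle r_i\otimes s_j\rangle$. You fill in somewhat more detail than the paper does — explicitly checking associativity and distributivity over arbitrary (rather than just finite) joins — but these are harmless expansions of the same argument, not a different strategy.
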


\begin{proof}
We need to verify the axioms (L1)-(L5) of \cite{bks:ideal}*{Definition~1.1}, and this is quite straightforward.
We have already seen that $\Id(\mathcal R)$ is complete, and it is compactly generated since $\mathcal I= \sum_{r\in \mathcal I}\langle r\rangle$ holds for every~$\mathcal I$. By Lemma \ref{lemma:cpt_ideals},  $1=\langle \id_\unit\rangle$ is compact and the product of two compact elements is compact: writing $\mathcal I=\langle I\rangle$ and $\mathcal J=\langle \mathcal J\rangle$ for finite sets $I,J$, it follows from Lemma \ref{lemma:products} that $\mathcal I\mathcal J=\langle I\otimes J \rangle$ is again compact.
Finally, the product distributes over finite joins: $\mathcal I_1(\mathcal I_2+\mathcal I_3)=\mathcal I_1\mathcal I_2 +\mathcal I_1\mathcal I_3$.
\end{proof}

It follows in particular that $\mathcal R$ has an associated spectrum of prime elements, which by \cite{bks:ideal} is a spectral space in the sense of Hochster~\cite{hochster:prime}. In the next few subsections we (re)define the spectrum and (re)prove its spectrality in a more traditional way, using localization, as it makes clear the parallel to usual commutative rings and we will need localization in any case.

\subsection{The Zariski spectrum}

Let $\mathcal R$ be a $\mathcal G$-graded commutative 2-ring.

\begin{defi}
A proper homogeneous ideal $\mathcal I$ of~$\mathcal R$ is \emph{prime} if it satisfies the usual condition, that $r\circ s\in \mathcal I$ implies either $r\in \mathcal I$ or $s\in \mathcal I$.
The \emph{\textup(homogeneous\textup) spectrum of $\mathcal R$}, denoted  $\Spec \mathcal R$, is the set of all prime ideals of $\mathcal R$ endowed with the Zariski topology. 
Thus by definition the closed subsets are those of the form 
\[
V(\mathcal I):=\{ \mathfrak p\in \Spec \mathcal R\mid \mathcal I\subseteq \mathfrak p \}
\]
for some homogeneous ideal $\mathcal I$ of~$\mathcal R$.
\end{defi}

\begin{lemma}
\label{lemma:spectop}
We have the familiar computational rules:
\begin{enumerate}
\item $V(0)=\Spec \mathcal R$ and $V(\mathcal R)=\emptyset$.
\item $ V(\mathcal I) \cup V(\mathcal J) = V(\mathcal I \mathcal J) $.
\item $ \bigcap_\lambda V(\mathcal I_\lambda) = V\left(\sum_\lambda \mathcal I_\lambda \right)$.
\end{enumerate}
In particular the Zariski topology is indeed a topology.
Moreover, the sets
\[D_r := \{ \mathfrak p\in \Spec \mathcal R\mid r\not\in \mathfrak p \} 
\quad \quad (r\in \mathcal R)\]
provide a basis of open subsets. 
\end{lemma}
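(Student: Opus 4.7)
The plan is to mimic the classical proof for commutative rings, with the main novelty being (2), where we need to address the fact that two arbitrary morphisms of $\mathcal R$ are not composable and where primality is stated in terms of composition.

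\textbf{Item (1)} is immediate. Every homogeneous ideal contains the zero morphisms, so $V(0) = \Spec \mathcal R$. For the second equality, if $\mathfrak p \in V(\mathcal R)$ then $\mathcal R \subseteq \mathfrak p$, forcing $\id_\unit \in \mathfrak p$, which contradicts properness of a prime (Remark \ref{rem:ideals}(3)); hence $V(\mathcal R)=\emptyset$.

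\textbf{Item (3)} is a purely set-theoretic verification: $\mathfrak p \supseteq \sum_\lambda \mathcal I_\lambda$ if and only if $\mathfrak p \supseteq \mathcal I_\lambda$ for every~$\lambda$, using Lemma~\ref{lemma:unions}.

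\textbf{Item (2).} The inclusion $V(\mathcal I) \cup V(\mathcal J) \subseteq V(\mathcal I\mathcal J)$ is clear since $\mathcal I\mathcal J \subseteq \mathcal I \cap \mathcal J$. For the reverse inclusion, suppose $\mathfrak p \in V(\mathcal I\mathcal J)$ while both $\mathcal I \not\subseteq \mathfrak p$ and $\mathcal J \not\subseteq \mathfrak p$, and pick $r\colon g \to h$ in $\mathcal I\setminus \mathfrak p$ and $s \colon g' \to h'$ in $\mathcal J \setminus \mathfrak p$. The maps $r$ and $s$ need not be composable, but their right and left twists $r\otimes g'\colon g\otimes g' \to h\otimes g'$ and $h\otimes s\colon h\otimes g' \to h\otimes h'$ are, and satisfy
\[
(h\otimes s)\circ (r\otimes g') \;=\; r\otimes s \;\in\; \mathcal I\otimes \mathcal J \;=\; \mathcal I\mathcal J \;\subseteq\; \mathfrak p
\]
by the bifunctoriality of~$\otimes$ and Lemma~\ref{lemma:products}. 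Since $\mathfrak p$ is prime, one of the two composable factors lies in $\mathfrak p$. As $\mathfrak p$ is a homogeneous ideal, it is closed under twisting (Remark \ref{rem:ideals}(6)), so this forces either $r\in \mathfrak p$ or $s\in \mathfrak p$, a contradiction. The main (minor) obstacle is exactly this: translating the ``composition-style'' primality condition into information about arbitrary morphisms via the tensor/composition identification of Lemma~\ref{lemma:products}.

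\textbf{Topology and basis.} That the axioms of a topology hold on $\Spec \mathcal R$ follows formally from (1)--(3). Finally, each $D_r$ is open since $D_r = \Spec \mathcal R \setminus V(\langle r\rangle)$, and any open set is of the form $\Spec \mathcal R \setminus V(\mathcal I)$ for some homogeneous ideal~$\mathcal I$. Writing $\mathcal I = \sum_{r\in \mathcal I}\langle r \rangle$ via Lemma~\ref{lemma:unions} and applying~(3) yields $V(\mathcal I) = \bigcap_{r\in \mathcal I} V(\langle r\rangle)$, whence
\[
\Spec \mathcal R \setminus V(\mathcal I) \;=\; \bigcup_{r\in \mathcal I} D_r,
\]
showing that the $D_r$ form a basis of open subsets.
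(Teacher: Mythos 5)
Your proof is correct and takes essentially the same route as the paper, with the minor difference that you spell out the verification of the ``computational rules'' (which the paper dismisses as easy) while the paper focuses only on the basis claim. Your treatment of item~(2) identifies the one genuinely non-trivial point --- that morphisms in $\mathcal I$ and $\mathcal J$ need not be composable, so one must pass to twists and invoke Lemma~\ref{lemma:products} to connect composition with the tensor product --- and handles it correctly. One small citation slip: when you conclude from $h\otimes s\in\mathfrak p$ (resp.\ $r\otimes g'\in\mathfrak p$) that $s\in\mathfrak p$ (resp.\ $r\in\mathfrak p$), you are using the ``untwisting'' direction, which is Remark~\ref{rem:ideals}(7) rather than (6); item (6) only asserts closure under twisting, whereas (7) gives the biconditional $r\in\mathcal I\Leftrightarrow g\otimes r\in\mathcal I$ that your argument needs.
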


\begin{proof}
The computational rules are easily verified. 
We then obtain
\[ 
V(\mathcal I) 
= V\left( \sum_{r\in \mathcal I} \langle r\rangle \right) 
= \bigcap_{r\in \mathcal I}  V(\langle r\rangle)
\]
which shows that the sets $V(\langle r\rangle)= \Spec \mathcal R\smallsetminus D_r $ ($r\in \mathcal R$) are a basis of closed subsets, which is equivalent to the second claim.
\end{proof}

\begin{remark}
\label{remark:spec_fct}
It is immediate to verify that every morphism $F\colon \mathcal R\to \mathcal R'$ induces a continuous map $\Spec(F)\colon \Spec \mathcal R'\to \Spec \mathcal R$ by $\Spec(F)(\mathfrak p):= F^{-1}\mathfrak p$, and that $\Spec$ is functorial: $\Spec(\id_\mathcal R)=\id_{\Spec \mathcal R}$ and $\Spec( F\circ F' )=\Spec F' \circ \Spec F$.
\end{remark}

\begin{lemma}
\label{lemma:max_prime}
Every maximal homogeneous ideal is prime, and every proper homogeneous ideal is contained in a prime ideal.
\end{lemma}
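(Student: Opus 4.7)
To prove that a maximal homogeneous ideal $\mathfrak m$ is prime, I would adapt the classical ring-theoretic argument by introducing the auxiliary collection
\[
\mathcal J:=\{t\in \Mor\mathcal R\mid r\otimes t\in\mathfrak m\}\,,
\]
where $r\colon h\to\ell$ and $s\colon g\to h$ are composable morphisms with $r\circ s\in\mathfrak m$ but $r\notin\mathfrak m$, the goal being to show $s\in\mathfrak m$. First one verifies that $\mathcal J$ is a homogeneous ideal: closure under sums is immediate from bilinearity of $\otimes$; closure under pre- and post-composition with a morphism $u$ follows from the interchange identity $r\otimes(u\circ t)=(\ell\otimes u)\circ(r\otimes t)$ together with the fact that $\mathfrak m$ is itself a categorical ideal; closure under tensoring with an arbitrary object reduces to the corresponding property of $\mathfrak m$ via the associator and symmetry. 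The inclusion $\mathfrak m\subseteq\mathcal J$ is automatic since $\mathfrak m$ is closed under tensoring with~$r$.

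The key step is then to produce $s\in\mathcal J$. Here I invoke the commutative diagram that appears inside the proof of Lemma~\ref{lemma:products} (a truncated version of diagram~\eqref{eq:pseudo_comm}), which expresses the composite $r\circ s$ as $u\circ(r\otimes s^\vee\otimes g)\circ v$ for isomorphisms $u,v$ and $g=\mathrm{dom}(s)$. Since $r\circ s\in\mathfrak m$, this forces $r\otimes s^\vee\otimes g\in\mathfrak m$, and hence $r\otimes s^\vee\in\mathfrak m$ after untwisting by~$g^\vee$; that is, $s^\vee\in\mathcal J$. By Example~\ref{ex:dual_translate} the morphism $s$ is a translate of~$s^\vee$, and since every homogeneous ideal is closed under translates (Remarks~\ref{rem:ideals}(4)), we deduce $s\in\mathcal J$. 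The maximality of $\mathfrak m$ now forces $\mathcal J\in\{\mathfrak m,\mathcal R\}$: if $\mathcal J=\mathfrak m$ then $s\in\mathfrak m$ and we are done; if $\mathcal J=\mathcal R$ then $\id_\unit\in\mathcal J$ gives $r\otimes\id_\unit\in\mathfrak m$, hence $r\in\mathfrak m$ via the unitor $\rho$, contradicting our hypothesis.

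For the second assertion, I apply Zorn's lemma in the standard way: given a proper homogeneous ideal $\mathcal I_0$, the poset of proper homogeneous ideals containing $\mathcal I_0$ is nonempty, and the union of any chain in it is again a homogeneous ideal which remains proper because $\id_\unit$ belongs to none of its members (Remarks~\ref{rem:ideals}(3)). A maximal element produced by Zorn is a maximal homogeneous ideal, which is prime by the first part. The only really delicate step in the whole argument is the passage from $r\circ s\in\mathfrak m$ to $s^\vee\in\mathcal J$, where one must exploit the identification of composition with a twisted tensor product from the proof of Lemma~\ref{lemma:products}.
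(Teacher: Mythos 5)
Your proof is correct, but it takes a genuinely different route from the paper's. The paper passes to the quotient $\mathcal R/\mathfrak m$ and uses the already-developed annihilator machinery (Proposition~\ref{prop:ann}): for $\bar s\neq 0$ in $\mathcal R/\mathfrak m$, the ideal $\Ann_{\mathcal R/\mathfrak m}(\bar s)$ is proper (otherwise $\id_\unit$ would annihilate $\bar s$), so by maximality it is zero, which is exactly the domain condition. You instead stay in $\mathcal R$ and build the tensor-theoretic ideal quotient $\mathcal J=\{t\mid r\otimes t\in\mathfrak m\}$, the analogue of $(\mathfrak m:r)$, and squeeze $\mathfrak m\subseteq\mathcal J$ against maximality. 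The decisive step --- passing from $r\circ s\in\mathfrak m$ to $r\otimes s^\vee\in\mathfrak m$ --- is where you correctly import the ``composition is a translate of tensor'' identity from the proof of Lemma~\ref{lemma:products}; note that this is the same structural input the paper's annihilator description ultimately rests on (Proposition~\ref{prop:pseudo_comm}), so the two arguments are parallel in content. Your version is more self-contained and closer to the elementary ring-theoretic argument, at the cost of re-verifying that $\mathcal J$ is a homogeneous ideal; the paper's version is shorter because the annihilator has already been shown to be a homogeneous ideal. Each step in your argument --- bilinearity for closure under sums, interchange for closure under composition, the untwisting via Remark~\ref{rem:ideals}(7), the translate $s\leftrightarrow s^\vee$ from Example~\ref{ex:dual_translate}, and the unitor argument ruling out $\mathcal J=\mathcal R$ --- checks out, and the Zorn's-lemma reduction for the second assertion matches the paper's.
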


\begin{proof}
The second claim follows from the first one by the usual application of Zorn's lemma. 
In order to prove the first claim, note that $\mathcal I$ is prime iff $\mathcal R/\mathcal I$ is a \emph{domain}, by which of course we mean that it has no nonzero divisors: if $rs=0$ then $ r=0$ or $s=0$. Also, $\mathcal I$ is maximal iff $\mathcal R/\mathcal I$ has no homogeneous ideals other than $0$ and itself, and they are distinct. 
But the latter implies the first: if $\mathcal I$ is maximal and if $s\in \mathcal R/\mathcal I$ is a nonzero element, then $\Ann_{\mathcal R/\mathcal I}(s)\neq \mathcal R/\mathcal I$ (otherwise $s=0$) so by hypothesis we must have $\Ann_{\mathcal R/\mathcal I}(s)=0$, showing that there exists no $r\in \mathcal R/\mathcal I$ with $rs=0$.
\end{proof}

\begin{cor}
\label{cor:nonempty}
The spectrum $\Spec \mathcal R$ is empty if and only if $\mathcal R\simeq \{0\}$.
\end{cor}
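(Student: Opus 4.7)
The plan is to reduce the statement to a criterion about the identity $\id_\unit$ of the tensor unit, and then invoke Lemma~\ref{lemma:max_prime}.

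First I would dispense with the easy direction. If $\mathcal R\simeq \{0\}$, then the unit object is a zero object and every morphism of $\mathcal R$ is the zero morphism; in particular $\id_\unit=0$, so the only homogeneous ideal is $\mathcal R$ itself, which by definition is not prime, hence $\Spec \mathcal R=\emptyset$.

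For the nontrivial direction, I would argue by contrapositive and show that if $\mathcal R\not\simeq \{0\}$ then $\id_\unit\neq 0$. Suppose on the contrary that $\id_\unit=0$. Then for every object $x\in \mathcal R$, the bifunctoriality of $\otimes$ gives $\id_\unit \otimes \id_x = 0$, and conjugating by the left unitor $\lambda_x\colon \unit \otimes x\stackrel{\sim}{\to} x$ we obtain $\id_x = \lambda_x \circ (\id_\unit \otimes \id_x) \circ \lambda_x^{-1}=0$. Thus every object of $\mathcal R$ is a zero object and $\mathcal R\simeq \{0\}$, contradicting the assumption.

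Having established $\id_\unit\neq 0$, the zero ideal $\mathbf 0$ of $\mathcal R$ is proper by Remark~\ref{rem:ideals}(3). Lemma~\ref{lemma:max_prime} then guarantees that $\mathbf 0$ is contained in a prime homogeneous ideal $\mathfrak p\in \Spec \mathcal R$, so $\Spec \mathcal R\neq \emptyset$. The only mildly delicate step is the passage $\id_\unit=0 \Rightarrow \mathcal R\simeq \{0\}$, but this is immediate from the existence of the coherent unitor isomorphisms.
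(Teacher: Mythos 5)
Your proof is correct, and it takes a genuinely different (and arguably more elementary) route than the paper's. You reduce everything to showing that $\mathcal R\not\simeq\{0\}$ forces $\id_\unit\neq 0$, so that the \emph{zero ideal} is proper by Remark~\ref{rem:ideals}(3), and then invoke Lemma~\ref{lemma:max_prime}; the only nontrivial ingredient beyond that is the $\Z$-bilinearity of $\otimes$ and the unitor isomorphism. The paper instead observes that some non-invertible morphism exists (implicitly, $0\colon\unit\to\unit$), and then establishes the stronger fact that for \emph{every} non-invertible $r$ the principal ideal $\langle r\rangle$ is proper; this uses the explicit description of principal ideals from Proposition~\ref{prop:principal} together with Corollary~\ref{cor:composite_iso}. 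Both arguments land in the same place via Lemma~\ref{lemma:max_prime}. Your version is leaner and avoids the machinery of principal ideals entirely, at the cost of not yielding the (independently useful) fact that non-invertible morphisms always lie in proper ideals.

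One small point worth being explicit about in your write-up: the step $0\otimes\id_x=0$ uses that the tensor product on a graded commutative 2-ring is $\Z$-bilinear on Hom-groups (which the paper also uses elsewhere, e.g.\ in Lemma~\ref{lemma:nilp_sum}), and the identification $\id_\unit\otimes\id_x=\id_{\unit\otimes x}$ is just functoriality of $\otimes$. Neither is a gap, but flagging them keeps the argument self-contained.
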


\begin{proof}
In view of Lemma \ref{lemma:max_prime} it suffices to see that every non-invertible map~$r$ in $\mathcal R$ is contained in some proper ideal, \ie, we have to show that in this case $\langle r\rangle$ is proper. 
If this were not the case, then $\id_\unit\in \langle r\rangle$. 
Therefore, by Proposition \ref{prop:principal}, we would be able to write $\id_\unit = urs $ and $\id_\unit= trv$ for some isomorphisms $u$ and~$v$.
But this would imply that $r$ has both a left and a right inverse and is therefore invertible, in contradiction with the hypothesis.
\end{proof}

\begin{prop}
\label{prop:qcpt}
The topological space $\Spec \mathcal R$ is quasi-compact.
\end{prop}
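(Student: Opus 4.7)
The plan is to mimic the classical proof of quasi-compactness of the Zariski spectrum of a commutative ring, making use of the tools already developed in this section.

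First, since the sets $D_r$ ($r\in \mathcal R$) form a basis of the topology by Lemma~\ref{lemma:spectop}, it suffices to show that every open cover of $\spec \mathcal R$ by basic opens admits a finite subcover. So I would start with an arbitrary family $\{r_\lambda\}_{\lambda\in\Lambda}$ of morphisms of $\mathcal R$ such that $\spec \mathcal R = \bigcup_{\lambda\in\Lambda} D_{r_\lambda}$, and aim to extract a finite subfamily still covering the spectrum.

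Next I would translate the covering condition into ideal-theoretic language: $\spec\mathcal R = \bigcup_\lambda D_{r_\lambda}$ means $\bigcap_\lambda V(\langle r_\lambda\rangle) = \emptyset$, which by Lemma~\ref{lemma:spectop}(3) is the same as $V\!\left(\sum_\lambda \langle r_\lambda\rangle\right) = \emptyset$. Hence the homogeneous ideal $\mathcal J := \sum_\lambda \langle r_\lambda\rangle$ is contained in no prime, so by Lemma~\ref{lemma:max_prime} it cannot be proper; that is, $\mathcal J = \mathcal R$, or equivalently $\id_\unit \in \mathcal J$.

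The final step is the crucial combinatorial one: by Lemma~\ref{lemma:unions} the join $\sum_\lambda \langle r_\lambda\rangle$ is computed Hom-wise as the ordinary sum of the subgroups $\langle r_\lambda\rangle(\unit,\unit)\subseteq \mathcal R(\unit,\unit)$. Thus the element $\id_\unit$ can be expressed as a \emph{finite} sum of morphisms, each lying in some $\langle r_{\lambda_i}\rangle$ for $i=1,\dots,n$. It follows that $\id_\unit \in \sum_{i=1}^n \langle r_{\lambda_i}\rangle = \langle r_{\lambda_1},\ldots,r_{\lambda_n}\rangle$, so this finite subideal equals all of $\mathcal R$; running the earlier argument in reverse, $\spec \mathcal R = \bigcup_{i=1}^n D_{r_{\lambda_i}}$, giving the desired finite subcover. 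I do not expect any real obstacle here: all the ingredients (explicit Hom-wise formula for joins, existence of primes over proper ideals, the basis of $D_r$'s) are already in place, and the argument is a direct transcription of the commutative-ring proof to the 2-ring setting.
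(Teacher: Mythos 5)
Your proof is correct and follows exactly the same route as the paper: translate the cover into the ideal-theoretic identity $\sum_\lambda\langle r_\lambda\rangle=\mathcal R$ via Lemma~\ref{lemma:max_prime}, then use the Hom-wise description of joins from Lemma~\ref{lemma:unions} to express $\id_\unit$ as a finite sum of elements from the $\langle r_{\lambda_i}\rangle$ and thereby extract the finite subcover. The paper additionally cites Proposition~\ref{prop:principal} to spell out the shape of the summands, but as you observe, Lemma~\ref{lemma:unions} alone already yields the needed finiteness.
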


\begin{proof}
Consider a cover $\Spec \mathcal R= \bigcup_{\lambda\in \Lambda} U_\lambda $ by open subsets, which by Lemma \ref{lemma:spectop} we may assume of the form $U_\lambda=D_{r_\lambda}$. Thus $\emptyset = \bigcap_\lambda V(\langle r_\lambda \rangle)=V(\sum_\lambda \langle r_\lambda \rangle)$.
This means that the ideal $\mathcal I := \sum_\lambda \langle r_\lambda \rangle $ is equal to $\mathcal R$ (otherwise there would be some prime containing it, by Lemma \ref{lemma:max_prime}).
Hence $\id_\unit\in \mathcal I $, and it follows from the explicit descriptions of principal ideals and sums  (Proposition \ref{prop:principal} and Lemma \ref{lemma:unions}) that there exist finitely many indices $\lambda_1,\ldots,\lambda_n$ and maps  $s_1,\ldots,s_n$ and $t_1,\ldots, t_n$ such that 
$\id_\unit=  s_1r_{\lambda_1} t_1 +\ldots+   s_nr_{\lambda_n} t_n$.
Therefore $\id_\unit \in \langle r_{\lambda_1}\rangle +\ldots + \langle r_{\lambda_n}\rangle$, that is 
$\mathcal R= \langle r_{\lambda_1}\rangle +\ldots + \langle r_{\lambda_n}\rangle $, that is 
$\Spec \mathcal R = D_{r_{\lambda_1}} \cup \cdots \cup D_{r_{\lambda_n}}$.
\end{proof}

\subsection{Multiplicative systems and localization}
\label{subsec:loc}
We define homogeneous multiplicative systems in a graded commutative 2-ring in the natural way and show that they satisfy a two-sided calculus of fractions. 
In particular this allows us to localize~$\mathcal R$ at a prime ideal.

\begin{defi}
\label{defi:mult_set}
A family $S\subseteq \Mor \mathcal R$ of morphisms of~$\mathcal R$ is called a \emph{homogeneous multiplicative system} if it contains all isomorphisms and is closed under taking composites and translates.
\end{defi}

\begin{remark}
\label{rem:dual_multiplicative}
If $S$ is a homogeneous multiplicative system in $\mathcal R$ and $r$ is some morphism of~$\mathcal R$, then it follows from Example \ref{ex:dual_translate} that $r \in S$ iff $r^\vee \in S$.
\end{remark}

\begin{example}
\label{ex:mult_r}
For any $r\in \mathcal R$, we denote by $S_r$ the smallest homogeneous multiplicative system of $\mathcal R$ containing~$r$. 
Note that $S_r$ consists precisely of all finite composites of twists of~$r$ and isomorphisms, \ie, all finite compositions of translates of~$r$.
\end{example}

\begin{example} 
\label{ex:mult_prime}
The complement $S_{\mathfrak p}:=\Mor \mathcal R\smallsetminus \mathfrak p$ of every homogeneous prime ideal~$\mathfrak p\in \Spec \mathcal R$ is a homogeneous multiplicative system of~$\mathcal R$.
\end{example}


\begin{prop}
\label{prop:fractions}
Every homogeneous multiplicative system $S$ in a graded commutative 2-ring~$\mathcal R$ satisfies both a left and a right calculus of fractions.
\end{prop}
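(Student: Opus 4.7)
The plan is to verify, for the right calculus of fractions, the two Gabriel--Zisman axioms beyond closure under composition and identities (both built into Definition~\ref{defi:mult_set}): the \emph{Ore completion}, asking that every cospan $X \xrightarrow{f} Z \xleftarrow{s} Y$ with $s\in S$ be completable to a commutative square whose leg parallel to $s$ lies in $S$; and the \emph{cancellation} property, asking that $sf = sg$ with $s\in S$ force $ft = gt$ for some $t\in S$. Because $\mathcal R$ is symmetric, the left calculus is verified by a strictly symmetric argument and we concentrate on the right one.

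For the Ore completion, the plan is to set $W := Y \otimes Z^\vee \otimes X$ and to take for the two legs of the square natural translates of $s$ and $f$ respectively: $t := \mu_X \circ (s \otimes Z^\vee \otimes X) \colon W \to X$ and $u := \mu_Y \circ (Y \otimes Z^\vee \otimes f) \colon W \to Y$, where $\mu_X \colon Z Z^\vee X \stackrel{\sim}{\to} X$ and $\mu_Y \colon Y Z^\vee Z \stackrel{\sim}{\to} Y$ are the canonical isomorphisms built from $\eta_Z^{-1}$, $\varepsilon_Z$ and the unitors. By construction $t$ is a translate of $s$, hence $t\in S$. The required identity $ft = su$ then unpacks, via bifunctoriality of $\otimes$ and naturality of the unitors, into the commutativity of the two paths $Z \otimes Z^\vee \otimes Z \rightrightarrows Z$; these coincide precisely by the zig-zag identity for the duality of the invertible object $Z$.

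For the cancellation, the plan is to exploit pseudo-commutativity (Proposition~\ref{prop:pseudo_comm}) applied in parallel to $sf$ and $sg$. Since the translate $s' := s \otimes Y^\vee \otimes X$ produced by that proposition depends only on $s$ and the shared source/target of $f$ and $g$ (not on the morphisms themselves), we obtain factorizations $sf = v \circ (Z Y^\vee \otimes f) \circ s' \circ u$ and $sg = v \circ (Z Y^\vee \otimes g) \circ s' \circ u$ sharing the same invertible $u, v$. Cancelling these isomorphisms, the hypothesis $sf = sg$ yields $(Z Y^\vee \otimes (f-g)) \circ s' = 0$. We then untwist by tensoring on the left with $Y \otimes Z^\vee$ and using the natural isomorphism $Y Z^\vee Z Y^\vee \otimes (-) \stackrel{\sim}{\to} (-)$ coming from the invertibility of $Y$ and $Z$, which rewrites the first factor as $f-g$ up to isomorphism. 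This produces an identity $(f-g) \circ t = 0$, equivalently $ft = gt$, where $t$ is the composite of an isomorphism with a translate of $s$, hence itself a translate of $s$ and so in $S$.

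The main obstacle is notational rather than conceptual: the verifications require careful bookkeeping of the various structural isomorphisms (unitors, associators, symmetries, and the evaluations and coevaluations of invertible objects), ensuring that the displayed composites are genuinely well-defined and fit together compatibly with the (bi)functoriality of $\otimes$. All such checks reduce to Mac Lane's coherence theorem combined with the triangle identities recalled in~\S\ref{subsec:inv}.
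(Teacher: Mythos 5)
Your proof is correct and follows the same strategy as the paper's: prove one side (you do the cospan/right version, the paper does the span/left version) and deduce the other by duality via $(-)^\vee$; establish the Ore condition directly from coherence/dinaturality of the duality data on the invertible objects; and prove cancellation by combining pseudo-commutativity (Proposition~\ref{prop:pseudo_comm}) with untwisting by an invertible object. Your packaging of the Ore step --- taking $W := Y \otimes Z^\vee \otimes X$ and reducing the required commutativity directly to the zig-zag identity, rather than tracing a large dinaturality diagram --- and your observation that the isomorphisms $u,v$ from Proposition~\ref{prop:pseudo_comm} are independent of $f$ versus $g$, are nice presentational variants, but the underlying arguments are the same.
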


\begin{proof}
We are only going to prove that $S$ satisfies a calculus of left fractions, because the proof for right fractions is dual (using the duality $(-)^\vee : \mathcal R\simeq \mathcal R^\op$, which stabilizes~$S$ by Remark~\ref{rem:dual_multiplicative}). 
Since by definition $S$ contains the identities of~$\mathcal R$ and is closed under composition, it remains to verify the following two assertions:
\begin{enumerate}
\item (Ore condition.) Given two morphisms $r$ and~$s$ with $s\in S$ as depicted,
\[
\xymatrix@1{
g\ar[r]^-r \ar[d]_{S \,\ni\, s} &  h \ar@{..>}[d]^{s' \,\in\, S} \\
 \ell \ar@{..>}[r]^-{r'} & m
}
\]
then there exist $s'\in S$ and $r'$ such that $s'r=r's$.

\item (Cancellation.) Given three morphisms $r$, $t$ an $s$ as depicted,
\[
\xymatrix@1{ 
m \ar@{..>}[r]^-{s'} & 
g \ar@/^/[r]^-{r} \ar@/_/[r]_-t &
 h \ar[r]^-s & 
  \ell
}
\]
with $s\in S$ and such that $sr=st$, then there exists a morphism $s'\in S$ such that $rs'=ts'$.
\end{enumerate} 
Since $\mathcal R$ is an $\mathsf{Ab}$-category, (2) may be conveniently reformulated as follows:
\begin{itemize}
\item[(2)] Given two morphisms $r\colon g\to h$ and $s\colon h\to \ell$ with $s\in S$ and $sr=0$, then there is an $s'\in S$ with $rs'=0$.
\end{itemize}

\noindent To prove (1), consider the following commutative diagram:
\begin{equation} \label{dinat_mult}
\xymatrix{
 *+[F]{h}  &
  *+[F]{g} \ar[r]^-s \ar[l]_-{r} &
 *+[F]{  \ell }
    \\
 \unit  h \ar[u]^{\sim}_{\lambda} & \unit g\ar[l]_-{\unit r} \ar[u]_\lambda \ar[r]^-{\unit s} &
 \unit \ell \ar[u]^{\sim}_\lambda  \\
 hh^\vee h \ar[u]^\sim_{\varepsilon_{h^\vee}\, h} \ar[d]_\sim^{h\, \varepsilon_{h}} &
  hh^\vee g \ar[l]_-{hh^\vee r} \ar[u]_{\varepsilon_{h^\vee}\, g} \ar[r]^-{hh^\vee s} \ar[d]^{hr^\vee g} &
   hh^\vee \ell \ar[u]_{\varepsilon_{h^\vee}\,\ell}^\sim \ar[d]^{hr^\vee \ell} & \\
 h\unit & hg^\vee g \ar[l]^-\sim_-{h\,\varepsilon_{g}} \ar[r]^-{hg^\vee s} &
*+[F] { hg^\vee \ell }
}
\end{equation}
where the bottom-left square commutes by applying $h\otimes -$ to a dinaturality square \eqref{dinat} for~$r$, the two top squares by the naturality of~$\lambda$, and all remaining squares by the bifunctoriality of~$\otimes$.
Therefore it suffices to define $r'$ to be the vertical composite from $\ell$ to $hg^\vee \ell$ on the right hand side, and $s'\in S$ to be the left-and-bottom composite from $h$ to~$hg^\vee\ell$.

Let us prove~(2).
Given such $r$ and $s$, by Proposition~\ref{prop:pseudo_comm} there exists a commutative diagram (where the $\bullet$'s denote some unnamed, possibly different objects)
\[
\xymatrix{ 
g \ar@/_/[d]_u^\sim \ar[r]^-r &
 h \ar[r]^-s  &
   {\ell}  \\
\bullet \ar[r]^-{s''} &
  \bullet \ar[r]^-{r'} & \bullet   \ar@/_/[u]_v^\sim
}
\]
where $r'$ is some twist of~$r$ (say $r'=m\otimes r$) and $s''$ some twist of~$s$,  
and where $u$ and~$v$ are invertible.
Note that $s\in S$ implies $s''u\in S$, and that $sr=0$ implies
\[
r' \circ (s''u) = v^{-1}sr =0 \;.
\] 
By applying $m^\vee \otimes-$ to this vanishing composite, and using the naturality of $\lambda$ and~$\varepsilon$ and the functoriality of~$\otimes$, we obtain the next commutative diagram.
\[
\xymatrix{
m^\vee g \ar@/_5ex/[rrdd]_-{s' \,:= }  \ar@/^5ex/[rrr]^-0 \ar[rr]_-{m^\vee (s''u)}^{\in \, S} &&
 m^\vee mg \ar[d]_{\varepsilon g}^\sim \ar[r]_-{m^\vee r' } &
  m^\vee m h \ar[d]_{\varepsilon h}^\sim \\
&&
 \unit g \ar[d]_\lambda^\sim \ar[r]_-{\unit r} &
  \unit h \ar[d]_\lambda^\sim \\
&&
 g \ar[r]_-r & h  
}
\]
The composite map labeled $s'$ satisfies $s'\in S$ and $rs'=0$ completing the proof of (2) and the statement.
\end{proof}


\begin{cor}
Let $\mathcal R$ be a graded commutative 2-ring, let $S\subseteq \Mor \mathcal R$ be a homogeneous multiplicative system, and let $\loc : \mathcal R \to S^{-1}\mathcal R$ be the localization of $\mathcal R$ at~$S$.
Then $S^{-1}\mathcal R$ is a graded commutative 2-ring and for a unique symmetric tensor structure~$\otimes$ making the localization functor $\loc$ symmetric monoidal and making the square 
\[
\xymatrix{
\mathcal R\times \mathcal R \ar[r]^-{\otimes} \ar[d]_{\loc \times \loc} & \mathcal R \ar[d]^{\loc} \\
S^{-1} \mathcal R\times S^{-1} \mathcal R \ar[r]^-{\otimes} & S^{-1}\mathcal R
}
\]
strictly commute.
\end{cor}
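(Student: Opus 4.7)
The plan is to descend the symmetric monoidal structure along $\loc$ by invoking the universal property of the localization of the product category. First, by Proposition \ref{prop:fractions}, $S$ admits a two-sided calculus of fractions, so the Gabriel--Zisman localization $\loc\colon \mathcal{R}\to S^{-1}\mathcal{R}$ exists and is an additive functor of $\mathbb{Z}$-linear categories which inverts precisely the morphisms in $S$. The product functor $\loc\times\loc\colon \mathcal{R}\times \mathcal{R}\to S^{-1}\mathcal{R}\times S^{-1}\mathcal{R}$ is itself a localization, namely at the class of pairs $(f,g)$ with both $f,g\in S$. To produce the dotted $\otimes$ on the lower row making the square strictly commute, I apply this universal property to the composite $\loc\circ\otimes\colon \mathcal{R}\times\mathcal{R}\to S^{-1}\mathcal{R}$, which reduces the construction to a single verification: for all $s,t\in S$ we need $\loc(s\otimes t)$ to be invertible. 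This is the heart of the argument, and it follows by writing $s\otimes t = (s\otimes\id)\circ(\id\otimes t)$, observing that each factor is a translate of an element of $S$, and invoking Definition \ref{defi:mult_set}, which says precisely that $S$ is closed under translates and composition. Hence $s\otimes t\in S$, and $\loc$ inverts it.

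Next, I transport the coherence structure. For each of the natural isomorphisms $\lambda,\rho,\alpha,\gamma$ of $\mathcal{R}$, the strict commutativity of the square ensures that its pointwise image under $\loc$ has the correct source and target to define a natural isomorphism on $S^{-1}\mathcal{R}$. Naturality on morphisms of the form $\loc(r)$ is inherited directly from $\mathcal{R}$, while naturality with respect to the formal inverses $\loc(s)^{-1}$ for $s\in S$ follows by the standard trick of inverting a commutative square in which the vertical maps are invertible. The pentagon, triangle, hexagon and symmetry axioms for $S^{-1}\mathcal{R}$ restrict on objects in the image of $\loc$ to the same axioms for $\mathcal{R}$, and since $\loc$ is (strictly) surjective on objects this is enough. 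The induced bifunctor $\otimes$ is additive because the original $\otimes$ and $\loc$ are, and morphisms in $S^{-1}\mathcal{R}$ are fractions of morphisms of $\mathcal{R}$. Invertibility of all objects of $S^{-1}\mathcal{R}$ is automatic: a symmetric monoidal functor preserves invertibility, so $\loc(g^\vee)$ serves as an inverse for $\loc(g)$. Thus $S^{-1}\mathcal{R}$ is a graded commutative 2-ring, graded (at worst) over the image of $\mathcal{G}$. Uniqueness follows because the underlying functor $\otimes$ is unique by the universal property, and each coherence isomorphism on $S^{-1}\mathcal{R}$ must equal the image under $\loc$ of the corresponding datum on $\mathcal{R}$ in order for $\loc$ to be strictly symmetric monoidal.

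The main (in fact only) obstacle is the compatibility point $s\otimes t\in S$ isolated above, which rests squarely on the axiomatic closure of homogeneous multiplicative systems under translates and composition. Once this is in hand, everything else is formal from the universal property of localization and from the fact that symmetric monoidal structures lift along localization functors that are strictly monoidal on objects.
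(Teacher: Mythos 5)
Your proof is correct, and since the paper itself only states ``It is a straightforward verification using the calculus of fractions,'' you have essentially supplied the details the authors omitted. You correctly identify the crux: $S$ is closed under $\otimes$, because $s\otimes t = (s\otimes\id)\circ(\id\otimes t)$ is a composite of translates of elements of $S$, and Definition~\ref{defi:mult_set} closes $S$ under both operations; once this is in hand, the tensor product, coherence data, additivity, and invertibility of objects descend routinely. Your route through the universal property of $\loc\times\loc$ (rather than an explicit check of well-definedness on fractions) is a clean packaging of the same content, and the uniqueness statement falls out of that universal property together with the strictness of the square.
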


\begin{proof}
It is a straightforward verification using the calculus of fractions.
\end{proof}

\begin{notation}
In particular, by Examples \ref{ex:mult_r} and \ref{ex:mult_prime} we obtain
for every map $r\in \mathcal R$ and every homogeneous prime $\mathfrak p\in \Spec \mathcal R$
 localization morphisms of graded commutative 2-rings
\[
\loc_r\colon \mathcal R\longrightarrow S^{-1}_r \mathcal R =: \mathcal R_r
\quad \textrm{ and } \quad
\loc_{\mathfrak p} \colon  \mathcal R \longrightarrow S_{\mathfrak p}^{-1}\mathcal R=: \mathcal R_{\mathfrak p}
\;,
\]
 ``away from $r$'' and ``at~$\mathfrak p$'', respectively.
\end{notation}

\begin{remark}
\label{rem:loc_invertible}
Note that  (simply because $\mathfrak p$ is an ideal) the multiplicative system~$S_\mathfrak p$ is \emph{saturated}, that is, if we are given three composable maps 
\[
\xymatrix{
g \ar[r]^-r & {h \phantom{g}}\!\!\! \ar[r]^-s & {\ell \phantom{g}}\!\!\! \ar[r]^-t & m
}
\]
with $ts\in S_\mathfrak p$ and $sr \in S_\mathfrak p$, it must follow that $s\in S_\mathfrak p$.
Hence $S_\mathfrak p$ consists precisely of all the morphisms in $\mathcal R$ whose image in~$\mathcal R_{\mathfrak p}$ is invertible.

\end{remark}

\subsection{Applications to the spectrum}
We now apply the calculus of fractions for homogeneous multiplicative systems in order to prove properties of the spectrum.

\begin{prop}
\label{prop:loc_spec}
Given a homogeneous multiplicative subset $S\subset \mathcal R$, the localization morphism $\loc\colon \mathcal R\to S^{-1}\mathcal R$ induces a homeomorphism 
\[
\Spec (\loc) \colon \Spec S^{-1}\mathcal R \stackrel{\sim}{\longrightarrow} 
\{ \mathfrak p \mid \mathfrak p\cap S =\emptyset \} \;\subseteq\; \Spec \mathcal R
\]
onto its image.
In particular,
the morphisms $\loc_r$ and $\loc_\mathfrak p$ induce homeomorphisms 
\[
\Spec \mathcal R_r \cong D_r 
\quad \textrm{ and } \quad
\Spec \mathcal R_\mathfrak p \cong \{\mathfrak q\in \Spec \mathcal R \mid \mathfrak q \subseteq  \mathfrak p \}
\]
for every $r\in \mathcal R$ and every $\mathfrak p\in \Spec \mathcal R$.
\end{prop}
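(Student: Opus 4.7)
The plan is to establish the bijection on underlying sets by classical means, then upgrade to a homeomorphism by matching basic opens, and finally deduce the two special cases.

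First I would set up the correspondence. Given $\mathfrak q\in \Spec S^{-1}\mathcal R$, the preimage $\loc^{-1}(\mathfrak q)$ is automatically a homogeneous ideal (this is a general fact noted in Remark \ref{rem:ideals}(2)) and it is prime because $\loc$ preserves products. Moreover it avoids $S$: every $s\in S$ becomes invertible in $S^{-1}\mathcal R$, so by Corollary \ref{cor:composite_iso} together with Remark \ref{rem:ideals}(3), $\loc(s)$ cannot lie in any proper ideal of $S^{-1}\mathcal R$, so $s \notin \loc^{-1}(\mathfrak q)$. Conversely, for $\mathfrak p\in \Spec \mathcal R$ with $\mathfrak p\cap S=\emptyset$, I would define $S^{-1}\mathfrak p$ to be the ideal of $S^{-1}\mathcal R$ generated by $\loc(\mathfrak p)$ and show that it is prime and that $\loc^{-1}(S^{-1}\mathfrak p)=\mathfrak p$. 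For the latter, if $r\in\loc^{-1}(S^{-1}\mathfrak p)$ one writes $\loc(r)$ as a fraction involving elements of $\mathfrak p$; using the two-sided calculus of fractions from Proposition \ref{prop:fractions} this yields an equality $s' r = r' s''$ in $\mathcal R$ with $s', s''\in S$ and some translate-composite $r'$ of an element of $\mathfrak p$; primality of $\mathfrak p$ combined with $S\cap\mathfrak p=\emptyset$ then forces $r\in \mathfrak p$.

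The main obstacle is the primality of $S^{-1}\mathfrak p$. Here I would again lean on Proposition \ref{prop:fractions}: a morphism of $S^{-1}\mathcal R$ lies in $S^{-1}\mathfrak p$ iff it can be represented as a left fraction $t^{-1} r$ with $r\in\mathfrak p$ and $t\in S$ (using the concrete description of principal ideals in Proposition \ref{prop:principal} and the cancellation condition). Given a product of two fractions that lies in $S^{-1}\mathfrak p$, one multiplies out using the Ore condition to obtain in $\mathcal R$ a relation $u \cdot ab = v\cdot p$ with $u,v\in S$ and $p\in\mathfrak p$. Using the cancellation part of Proposition \ref{prop:fractions} one reduces to an equality of the form $w\cdot ab = p'$ in $\mathcal R$ for some $w\in S$ and $p'\in\mathfrak p$, so $ab$ lies in the homogeneous ideal $\mathfrak p$ (since $\mathfrak p\cap S=\emptyset$ and the quotient by $\mathfrak p$ is a domain). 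Primality of $\mathfrak p$ then yields the result after passing back through $\loc$. These two assignments $\mathfrak q\mapsto \loc^{-1}\mathfrak q$ and $\mathfrak p\mapsto S^{-1}\mathfrak p$ are then straightforwardly mutually inverse, giving the set-theoretic bijection with the claimed image.

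To promote this to a homeomorphism, I would check that the bijection identifies the Zariski topologies. The continuous map $\Spec(\loc)$ is given by $\mathfrak q\mapsto \loc^{-1}\mathfrak q$, and its image is the subspace $\{\mathfrak p\mid \mathfrak p\cap S=\emptyset\}$. To see that it is a topological embedding it suffices to show the inverse map sends basic closed sets to basic closed sets: the closed subset $V(\langle \loc(r)\rangle)$ of $\Spec S^{-1}\mathcal R$ corresponds under the bijection to $V(\langle r\rangle)\cap \{\mathfrak p\mid \mathfrak p\cap S=\emptyset\}$, which is exactly the subspace topology from $\Spec \mathcal R$; the general case follows by taking intersections using Lemma \ref{lemma:spectop}(3).

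Finally, the two special cases are immediate. For $S=S_r$, Example \ref{ex:mult_r} together with closure of homogeneous primes under translates (Remark \ref{rem:ideals}(4)) and primality gives $\mathfrak p\cap S_r=\emptyset \iff r\notin \mathfrak p$, so the image is exactly $D_r$. For $S=S_\mathfrak p=\Mor \mathcal R\smallsetminus \mathfrak p$, one has $\mathfrak q\cap S_\mathfrak p=\emptyset$ precisely when $\mathfrak q\subseteq \mathfrak p$, giving the second homeomorphism.
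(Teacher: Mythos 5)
Your proposal is correct and follows essentially the same route as the paper: establish a bijection between $\Spec S^{-1}\mathcal R$ and $\{\mathfrak p\mid\mathfrak p\cap S=\emptyset\}$ via $\mathfrak q\mapsto\loc^{-1}\mathfrak q$ and $\mathfrak p\mapsto S^{-1}\mathfrak p$, verify the Zariski topologies match on basic (co)opens, and derive the special cases from the description of $S_r$ and $S_\mathfrak p$. The only organizational difference is that the paper defines $S^{-1}\mathfrak p$ directly as the set of left fractions $\{s^{-1}r\mid s\in S,\;r\in\mathfrak p\}$ and proves this set is already a homogeneous ideal (the nontrivial point being closure under sums, handled via the Ore condition), whereas you take $S^{-1}\mathfrak p$ to be the ideal generated by $\loc(\mathfrak p)$ and then argue that it coincides with that fraction set — the same work in a slightly different order.
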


\begin{proof}
Let us first explain how to obtain the two special cases of the statement from the main claim. It follows immediately from the definition of a prime ideal~$\mathfrak p$ that $r\in \mathfrak p$ iff $s\in \mathfrak p$ for some $s\in S_r$; hence $\mathfrak p\cap S_r=\emptyset $ iff $r\notin \mathfrak p$. Thus $\Spec \mathcal R_r \cong D_r $ by the first part of the proposition. The last homeomorphism is proved similarly. 

Now let $S$ be an arbitrary homogeneous multiplicative subset. For each homogeneous ideal $\mathcal I \subseteq \mathcal R$ we consider the following collection of morphisms of $S^{-1}\mathcal R$ (expressed as left fractions):
\[
S^{-1}\mathcal I :=\{ s^{-1}r \mid s\in S , r\in \mathcal I \} \,.
\]
We now prove the proposition in a series of easy lemmas.

\begin{lemma}
\label{lemma:claim1}
The collection $S^{-1}\mathcal I$ forms a homogeneous ideal of $S^{-1}\mathcal R$.
\end{lemma}
\begin{proof}
Evidently $S^{-1}\mathcal I$ contains all zero maps and is closed under twists. 
Closure under left and right multiplication follows from the Ore condition for left fractions: if the fractions $s_1^{-1}r_1$ and $s_2^{-1}r_2$ represent two morphisms in $S^{-1}\mathcal R$ that are right, resp.\ left, composable with some map $s^{-1}r\in S^{-1}\mathcal I$, then we find in $\mathcal R$ a commutative diagram of the form
\[
\xymatrix@R=15pt@C=15pt{
&& \bullet && \bullet && \\
& \bullet \ar[ur]^{\tilde r} && \bullet \ar[ul]_{\tilde s_1}^\sim \ar[ur]^{\tilde r_2} &&
 \ar[ul]^\sim_{\tilde s} \bullet & \\
\bullet \ar[ur]^{r_1} &&
 \bullet \ar[ul]_{s_1}^\sim \ar[ur]^{r} &&
  \bullet  \ar[ul]_{s}^\sim \ar[ur]^{r_2} && \bullet \ar[ul]^\sim_{s_2}
}
\]
with $\tilde s_1, \tilde s\in S$. Since $\mathcal I$ is an ideal and $S$ is closed under composition, we deduce from the equations $(s^{-1}r)(s_1^{-1}r_1)= (\tilde s_1s)^{-1}(\tilde r r_1)$ and 
$(s_2^{-1}r_2)(s^{-1}r)= (\tilde s s_2)^{-1}(\tilde r_2 r)$
that both composites belong again to $S^{-1}\mathcal I$.
Next we prove closure under sums. Consider two summable fractions $s^{-1}_1r_1, s^{-1}_2r_2\in S^{-1}\mathcal I$. By applying Ore's condition again, we obtain in~$\mathcal R$ a diagram as follows,
\[
\xymatrix{ 
& \bullet \ar[d]^{\tilde s_1}_\sim & \\
\bullet \ar[ur]^{r_1} \ar[dr]_{r_2} & \bullet & \bullet \ar[ul]_{s_1} \ar[dl]^{s_2} \\
&\bullet \ar[u]_{\tilde s_2}&
}
\]
where the right half is commutative and (say) $\tilde s_1\in S$. 
By the very definition of the sum of morphisms in the localization $S^{-1}\mathcal R$, we have the equation
\[
s_1^{-1}r_1 + s_2^{-1}r_2 =  \underbrace{(\tilde s_1 s_1)}_{\in \, S}\! {}^{-1}\underbrace{(\tilde s_1r_1 + \tilde s_2r_2)}_{\in \, \mathcal I} 
\,,
\] 
from which we deduce that the sum belongs again to $S^{-1}\mathcal I$.
Thus $S^{-1}\mathcal I$ is a homogeneous ideal as claimed.  
\end{proof}

\begin{lemma}
\label{lemma:claim2}
 If $\mathfrak p\in \Spec \mathcal R$ is such that $\mathfrak p\cap S=\emptyset$, then $S^{-1}\mathfrak p\in \Spec S^{-1}\mathcal R$.
\end{lemma}
\begin{proof} For this, we may assume without loss of generality that $S$ is saturated, since $\mathcal R\smallsetminus \mathfrak p$ contains $S$ and is saturated (Remark~\ref{rem:loc_invertible}).  
Let us see that $S^{-1}\mathfrak p$ is proper. If not then we may write $\id_\unit = s^{-1}r \in S^{-1}\mathfrak p$, from which we deduce by saturation that~$r\in S$ as well, but this would contradict our hypothesis that $S\cap \mathfrak p=\emptyset$. 
Now assume that $(s^{-1}_1r_1)(s^{-1}_2r_2)\in S^{-1}\mathfrak p$ for two left fractions $s^{-1}_1r_1, s^{-1}_2r_2 \in S^{-1}\mathcal R$.
Thus $(s^{-1}_1r_1)(s^{-1}_2r_2)= s^{-1} r$ for some $s\in S$ and $r\in \mathfrak p$.
By Ore, we obtain in~$\mathcal R$ a commutative diagram
\[
\xymatrix@R=15pt@C=15pt{
&& \bullet && \\
& \bullet \ar[ur]^{\tilde r_1} && \bullet \ar[ul]_{\tilde s_2}^\sim & \\
\bullet \ar[ur]^{r_2} && \bullet \ar[ul]_{s_2}^\sim \ar[ur]^{r_1} && \bullet  \ar[ul]_{s_1}^\sim
}
\]
with $\tilde s_2\in S$ and therefore an equation $(s^{-1}_1r_1)(s^{-1}_2r_2)= (\tilde s_2s_1)^{-1}(\tilde r_1r_2)$. 
Hence $s^{-1} r$ and $(\tilde s_2s_1)^{-1}(\tilde r_1r_2)$ are equivalent fractions and thus admit a common amplification, \ie, there exists in~$\mathcal R$ a commutative diagram
\[
\xymatrix{ 
& \bullet \ar[d]^{t} & \\
\bullet \ar[ur]^{r} \ar[dr]_{\tilde r_1 r_2} & \bullet & \bullet \ar[ul]_{s} \ar[dl]^{\tilde s_2s_1} \\
&\bullet \ar[u]_{u}^\sim &
}
\]
with $u\in S$. In particular we see that $u(\tilde r_1r_2)= tr\in \mathfrak p$, but since $u\not\in \mathfrak p$ we must have that $\tilde r_1r_2\in \mathfrak p$, so either $\tilde r_1$ or $r_2$ must belong to~$\mathfrak p$.
In the latter case $s^{-1}_2r_2\in S^{-1}\mathfrak p$; in the former, the equation $\tilde r_1s_2 =\tilde s_2 r_1$ implies that $r_1\in \mathfrak p$ and thus $s^{-1}_1r_1\in S^{-1}\mathfrak p$.
This concludes the proof that $S^{-1}\mathfrak p$ is a prime ideal in $S^{-1}\mathcal R$. 
\end{proof}

\begin{lemma} 
\label{lemma:claim3}
The construction $S^{-1}$ is left inverse to $\loc^{-1}$, \ie, every homogeneous ideal $\mathcal J$ of $ S^{-1}\mathcal R$ has the form $\mathcal J= S^{-1}(\loc^{-1} \mathcal J)$.
\end{lemma}
\begin{proof}
The inclusion $S^{-1}(\loc^{-1}\mathcal J)\subseteq \mathcal J$ is obvious. 
On the other hand, if a fraction $s^{-1}r $ belongs to~$ \mathcal J$ then $\loc(r) = ss^{-1}r \in \mathcal J$ too, so that $s^{-1}r\in S^{-1}(\loc^{-1}\mathcal J)$. This proves the other inclusion $\mathcal J\subseteq S^{-1}(\loc^{-1}\mathcal J)$ and therewith the claim.
\end{proof}

\begin{lemma}
\label{lemma:claim4} The two maps $\loc^{-1}$ and  $S^{-1}$ induce mutually inverse bijections 
between $ \Spec S^{-1}\mathcal R $ and $ \{\mathfrak p\in \Spec \mathcal R\mid \mathfrak p\cap S= \emptyset \} $.
\end{lemma}
\begin{proof}
We have already seen (in Lemma \ref{lemma:claim2} and Remark \ref{remark:spec_fct}) that $S^{-1}$ and $\loc^{-1}$ restrict to prime ideals as described (for the latter, note that $\loc^{-1}(\mathfrak q)\cap S=\emptyset$ because every $\mathfrak q\in \Spec S^{-1}\mathcal R $ is a proper ideal). By Lemma \ref{lemma:claim3}, we have $S^{-1}(\loc^{-1}\mathfrak q)=\mathfrak q$ for all $\mathfrak q\in \Spec S^{-1}\mathcal R$, and the inclusion $ \mathfrak p \subseteq \loc^{-1}(S^{-1}\mathfrak p)$
is obvious for all $\mathfrak p\in \Spec \mathcal R$ with $\mathfrak p\cap S=\emptyset$.
To prove the reverse inclusion, let $r\in \mathcal R$ be such that $\loc(r)\in S^{-1}\mathfrak p$. 
Then, by the definition of $S^{-1}\mathfrak p$, there exists in~$\mathcal R$ a commutative diagram
\[
\xymatrix{ 
& \bullet \ar[d]^{v} & \\
\bullet \ar[ur]^{t} \ar[dr]_{r} & \bullet & \bullet \ar[ul]_{s}^<<<<\sim \ar@{=}[dl] \\
&\bullet \ar[u]_{u}^\sim &
}
\]
with $s,u\in S$ and $t\in \mathfrak p$, from which we see that $ur=vt\in \mathfrak p$.
Since $u\not\in \mathfrak p$ by hypothesis and $\mathfrak p$ is prime, we conclude that $r$ belongs to~$\mathfrak p$. Therefore $\loc^{-1}(S^{-1}\mathfrak p)\subseteq \mathfrak p$ as well.
\end{proof}

\begin{lemma}
\label{lemma:claim5}  For any left fraction $s^{-1}r\in S^{-1}\mathcal R$, the preimage of  $V(\langle s^{-1}r\rangle )$ under the map 
$S^{-1}\colon \{\mathfrak p\in \Spec \mathcal R\mid \mathfrak p\cap S=\emptyset \}\stackrel{\sim}{\to} \Spec S^{-1}\mathcal R$ is $V(\langle r\rangle )$.
\end{lemma}
\begin{proof}
Consider  some $\mathfrak p\in \Spec \mathcal R$ with $\mathfrak p\cap S=\emptyset$. Clearly, if $r\in \mathfrak p$, then $s^{-1} r\in S^{-1}\mathfrak p$. Conversely, if $s^{-1}r\in S^{-1}\mathfrak p$ then---by the easy argument already employed twice---we must have $r\in \mathfrak p$. Therefore $(S^{-1})^{-1}V(\langle s^{-1}r\rangle ) = V(\langle r\rangle)$, as claimed.
\end{proof}

Finally, in order to complete the proof of the proposition it suffices to note that the bijection in Lemma \ref{lemma:claim4} is a homeomorphism for the respective Zariski topologies: the map $\loc^{-1}$ was already seen to be continuous, and its inverse $S^{-1}$ is continuous by virtue of Lemma \ref{lemma:claim5}. So we are done.
\end{proof}

We record an easy but pleasant consequence of the proof: the operations of taking quotients and localizations commute with one another.

\begin{cor}
\label{cor:SvsI}
Let $\mathcal R$ be a graded commutative 2-ring, let  $\mathcal I$ be a homogeneous ideal of $\mathcal R$, and let $S$ be a homogeneous multiplicative system in~$\mathcal R$. Then there exists a unique isomorphism of graded commutative 2-rings
\[
S^{-1}\mathcal R /S^{-1}\mathcal I \stackrel{\sim}{\to} (S/\mathcal I)^{-1}(\mathcal R/\mathcal I)
\]
which is compatible with the localization and quotient morphisms, where $S^{-1}\mathcal I$ is the homogeneous ideal of Lemma \ref{lemma:claim1}, and where $S/\mathcal I$ is the homogeneous system generated in $\mathcal R/\mathcal I$ by the image of~$S$.
\end{cor}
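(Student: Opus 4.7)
The plan is to establish the isomorphism by the standard ``double universal property'' argument, now justified in our setting by Proposition~\ref{prop:fractions} (which gives the calculus of fractions and hence the usual universal property of $\loc\colon \mathcal R\to S^{-1}\mathcal R$) and by the universal property of the additive/tensor quotient by a homogeneous ideal (Remark~\ref{rem:ideals}(2)).

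First I would check that the relevant constructions make sense. The image $\bar S$ of $S$ under the quotient morphism $q\colon\mathcal R\to\mathcal R/\mathcal I$ is automatically closed under translates and composition and contains all isomorphisms, so the multiplicative system $S/\mathcal I$ generated by $\bar S$ in $\mathcal R/\mathcal I$ (in fact equal to $\bar S$ up to adjoining isomorphisms) is a homogeneous multiplicative system. Dually, $S^{-1}\mathcal I$ is a homogeneous ideal of $S^{-1}\mathcal R$ by Lemma~\ref{lemma:claim1}.

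Next I would construct the two comparison morphisms using universal properties. For the forward map, the composite
\[
\mathcal R\xrightarrow{\;q\;}\mathcal R/\mathcal I\xrightarrow{\;\loc\;}(S/\mathcal I)^{-1}(\mathcal R/\mathcal I)
\]
sends every element of $S$ to an isomorphism, so by the universal property of $S^{-1}\mathcal R$ it factors uniquely through a morphism of graded commutative 2-rings $F\colon S^{-1}\mathcal R\to (S/\mathcal I)^{-1}(\mathcal R/\mathcal I)$. Since $F$ sends every element of $\mathcal I$ to zero and it is clear from Lemma~\ref{lemma:claim1} that the elements of $\mathcal I$ generate $S^{-1}\mathcal I$ (every $s^{-1}r$ with $r\in\mathcal I$ is a left composite $s^{-1}\circ\loc(r)$ in $S^{-1}\mathcal R$), $F$ kills $S^{-1}\mathcal I$ and therefore factors through $S^{-1}\mathcal R/S^{-1}\mathcal I$, yielding
\[
\Phi\colon S^{-1}\mathcal R/S^{-1}\mathcal I\longrightarrow (S/\mathcal I)^{-1}(\mathcal R/\mathcal I).
\]
In the other direction, the composite $\mathcal R\xrightarrow{\loc}S^{-1}\mathcal R\to S^{-1}\mathcal R/S^{-1}\mathcal I$ kills $\mathcal I$ (since $\mathcal I\subseteq S^{-1}\mathcal I$ via $\loc$), hence factors through $\mathcal R/\mathcal I$; the resulting morphism inverts every element of $\bar S$ (hence of $S/\mathcal I$), so it factors uniquely through $(S/\mathcal I)^{-1}(\mathcal R/\mathcal I)$, yielding
\[
\Psi\colon (S/\mathcal I)^{-1}(\mathcal R/\mathcal I)\longrightarrow S^{-1}\mathcal R/S^{-1}\mathcal I.
\]

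Finally I would verify $\Phi$ and $\Psi$ are mutually inverse. Both composites $\Phi\Psi$ and $\Psi\Phi$ are endomorphisms of graded commutative 2-rings commuting with the respective structural morphisms from $\mathcal R$; by the uniqueness clause in each of the two universal properties invoked above they must equal the identity. Uniqueness of $\Phi$ (equivalently $\Psi$) follows from the same uniqueness statements. I expect no real obstacle here beyond bookkeeping: the only mildly delicate point is checking that $S^{-1}\mathcal I$ is generated, as a homogeneous ideal of $S^{-1}\mathcal R$, by the image of $\mathcal I$, which is immediate from the formula $s^{-1}r=s^{-1}\cdot\loc(r)$ and the fact that $s^{-1}$ is an isomorphism in $S^{-1}\mathcal R$.
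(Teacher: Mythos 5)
Your proposal is correct and fleshes out exactly the argument the paper has in mind when it says the proof is ``obvious from the universal properties of quotients and localizations'': you construct the two comparison morphisms by factoring through the relevant quotient and localization in the two possible orders, note that $S^{-1}\mathcal I$ is generated as a homogeneous ideal by $\loc(\mathcal I)$, and conclude mutual inversion (and uniqueness) from the uniqueness clauses of those universal properties. No gap; same approach as the paper, just written out.
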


\begin{proof}
The proof is obvious from the universal properties of quotients and localizations.
\end{proof}

\begin{prop}
\label{prop:2spectral}
The spectrum $\Spec \mathcal R$ of every graded commutative 2-ring~$\mathcal R$ is a spectral topological space, \ie: it is $T_0$,  quasi-compact, it has a basis of quasi-compact open subsets closed under finite intersections, and every irreducible closed subset has a unique generic point.
Moreover, we may take $\{D_r\mid r\in \mathcal R\}$ as a basis of quasi-compact opens.
\end{prop}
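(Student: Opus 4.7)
My plan is to verify each of the five properties in turn, building on the results already established in this section. Quasi-compactness of $\Spec \mathcal R$ is exactly Proposition \ref{prop:qcpt}. That $\{D_r\mid r\in \mathcal R\}$ is a basis is already in Lemma \ref{lemma:spectop}, and the $T_0$ axiom follows at once: two distinct primes $\mathfrak p \ne \mathfrak q$ must differ by some morphism $r$ lying in one but not the other, and then the corresponding basic open $D_r$ separates them.

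Each $D_r$ is quasi-compact, because Proposition \ref{prop:loc_spec} gives a homeomorphism $D_r \cong \Spec \mathcal R_r$, and $\Spec \mathcal R_r$ is quasi-compact by a second application of Proposition \ref{prop:qcpt} to the localization $\mathcal R_r$. For closure of the basis under finite intersections, the empty intersection is handled by $\Spec \mathcal R = D_{\id_\unit}$, and the general case reduces to showing that $D_r \cap D_s = D_{r \otimes s}$ for any two morphisms $r\colon g\to h$ and $s\colon g'\to h'$. Here the key point is the factorization
\[
r \otimes s \;=\; (h \otimes s) \circ (r \otimes g'),
\]
which exhibits $r \otimes s$ as the composite of a translate of $s$ after a translate of $r$. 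Since every homogeneous prime $\mathfrak p$ is closed under translates (Remarks \ref{rem:ideals}) and satisfies the elementwise primality condition, one concludes that $r \otimes s \in \mathfrak p$ if and only if $r \in \mathfrak p$ or $s \in \mathfrak p$, yielding the desired equality of open sets.

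It remains to prove sobriety. Given a nonempty irreducible closed subset $Z \subseteq \Spec \mathcal R$, the plan is to take $\mathfrak p := \bigcap_{\mathfrak q \in Z} \mathfrak q$ and check that it is prime with $\overline{\{\mathfrak p\}}= Z$. It is clearly a homogeneous ideal, and it is proper because it is contained in any fixed $\mathfrak q \in Z$. To see it is prime, suppose $rs \in \mathfrak p$; then $rs \in \mathfrak q$ for every $\mathfrak q \in Z$, so by the primality of each $\mathfrak q$ we get $Z \subseteq V(\langle r \rangle) \cup V(\langle s \rangle)$. Irreducibility of $Z$ forces one of the two closed subsets $Z \cap V(\langle r \rangle)$ and $Z \cap V(\langle s\rangle)$ to equal $Z$, say $Z \subseteq V(\langle r \rangle)$, so $r\in \bigcap_{\mathfrak q \in Z}\mathfrak q =\mathfrak p$. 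Finally, writing $Z = V(\mathcal I)$, one checks directly that $V(\mathfrak p) = V(\mathcal I) = Z$ (the inclusions $\mathcal I \subseteq \mathfrak p$ and ``$\mathfrak p \subseteq \mathfrak q$ whenever $\mathcal I \subseteq \mathfrak q$'' are both immediate), and a similar direct computation identifies $\overline{\{\mathfrak p\}}$ with $V(\mathfrak p)$. Uniqueness of the generic point is then given by the $T_0$ property.

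The only step that really requires care is the identity $D_r \cap D_s = D_{r\otimes s}$, since it invokes both the elementwise primality condition and the closure of prime ideals under tensor twists. Everything else is a direct transcription of the classical argument for the Zariski spectrum of a commutative ring.
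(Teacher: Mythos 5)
Your proof is correct. The only two places where you deviate from the paper's route are worth noting briefly.

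For the closure of the basis under finite intersections, you prove $D_r\cap D_s = D_{r\otimes s}$ via the factorization $r\otimes s = (h\otimes s)\circ(r\otimes g')$ together with closure of primes under (de)twisting; the paper instead records $D_r\cap D_s = D_{\tilde rs}$ where $\tilde r$ is a translate of $r$ composable with $s$. These two identities are equivalent -- as your factorization already exhibits, $r\otimes s$ is a composite of a translate of $r$ with a translate of $s$ -- so this is a cosmetic difference and your version is, if anything, slightly more symmetric.

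For sobriety you take $\mathfrak p := \bigcap_{\mathfrak q\in Z}\mathfrak q$ and check directly that it is a homogeneous prime with $\overline{\{\mathfrak p\}} = V(\mathfrak p) = Z$, using irreducibility exactly where it is needed (to show $rs\in\mathfrak p$ forces $Z\subseteq V(\langle r\rangle)$ or $Z\subseteq V(\langle s\rangle)$). The paper instead appeals to Zorn's lemma to produce a minimal prime over $\mathcal I$ where $Z = V(\mathcal I)$, reducing via $Z\cong\Spec(\mathcal R/\mathcal I)$, and calls the rest a ``standard application.'' Your argument is more explicit and self-contained; in particular, the mere existence of a minimal point in every nonempty closed set would not by itself imply sobriety (the cofinite topology on an infinite set is a counterexample), so your approach makes visible exactly how irreducibility enters, which the paper leaves implicit. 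Both arguments are the usual ones from affine commutative algebra transplanted to this setting, and both are fine.

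Everything else -- quasi-compactness of $\Spec\mathcal R$ and of each $D_r$ via $D_r\cong\Spec\mathcal R_r$, the $T_0$ property, uniqueness of generic points -- matches the paper's proof.
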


\begin{proof}
We have already proved in Proposition \ref{prop:qcpt} that the whole spectrum is quasi-compact.
Moreover, the basic open subsets $D_r$, $r\in \mathcal R$, of Lemma \ref{lemma:spectop} are also quasi-compact, because of the homeomorphisms $D_r\cong \Spec \mathcal R_r$ of Proposition~\ref{prop:loc_spec}. As in the case of usual rings it is clear that they are closed under finite intersections (indeed $D_r\cap D_s = D_{\tilde rs}$ for any translate $\tilde r$ of $r$ that is composable with~$s$).
Thus it only remains to prove the existence and uniqueness of generic points.
Uniqueness and the fact that the spectrum is $T_0$ are immediate from the definition of the Zariski topology, from which we see that the closure of a point has the form $\overline{\{\mathfrak p\}}=V(\mathfrak p)$; accordingly, if $\mathfrak p_1$ and $\mathfrak p_2$ have the same closure then they are contained in one another and hence equal.
For the existence, it suffices to show that every nonempty close subset $Z \subseteq \Spec \mathcal R$ contains a minimal point (with respect to inclusion). 
Writing $Z= V(\mathcal I)$, this is equivalent to showing that if $\mathcal I$ is proper then there is a minimal prime containing it (as $Z\cong \Spec \mathcal R/\mathcal I$ and by Corollary \ref{cor:nonempty}). 
This is a standard application of Zorn's lemma.
\end{proof}

\begin{thm}
\label{thm:spec}
For every morphism $F\colon \mathcal R\to \mathcal R'$ of graded commutative 2-rings, there is a spectral continuous map $\Spec F\colon \Spec \mathcal R'\to \Spec \mathcal R$ given by $\mathfrak p\mapsto F^{-1}\mathfrak p$.
This defines a contravariant functor, $\Spec$, from graded commutative 2-rings and their morphisms to spectral topological spaces and spectral continuous maps.
\end{thm}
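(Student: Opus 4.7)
The plan is to check in sequence: (i) that $F^{-1}\mathfrak{p}$ is a homogeneous prime ideal of $\mathcal{R}$ whenever $\mathfrak{p}\in\Spec\mathcal{R}'$; (ii) that the resulting set-theoretic map $\Spec F$ is continuous; (iii) that it is spectral; and (iv) that $\Spec$ is functorial.

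For (i), I would first note that since $F$ is $\Z$-linear, the groups $(F^{-1}\mathfrak{p})(g,h) := F^{-1}(\mathfrak{p}(F(g),F(h)))$ are subgroups of $\mathcal{R}(g,h)$, and closure under pre- and post-composition in $\mathcal{R}$ transfers from the analogous property of $\mathfrak{p}$ via $F(sr) = F(s)F(r)$. The only mildly delicate point is closure under twists by arbitrary objects $\ell\in\mathcal{G}$: here I would use the structural isomorphism $F(\ell\otimes r)\cong F(\ell)\otimes F(r)$ of the tensor functor $F$, observing that the right-hand side lies in $\mathfrak{p}$ (because $\mathfrak{p}$ is homogeneous and $F(r)\in\mathfrak{p}$) and that homogeneous ideals are closed under isomorphism. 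Properness of $F^{-1}\mathfrak{p}$ follows from Remark~\ref{rem:ideals}(3): since $F$ is symmetric monoidal, $F(\id_\unit)$ is an isomorphism in $\mathcal{R}'$ and therefore cannot lie in $\mathfrak{p}$. Primeness is immediate from primeness of $\mathfrak{p}$ together with $F(sr)=F(s)F(r)$.

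For (ii) and (iii), the crucial calculation is the identity
\[
(\Spec F)^{-1}(D_r) \;=\; D_{F(r)} \qquad (r\in\mathcal{R}),
\]
which is an unwinding of definitions: $\mathfrak{p}'\in(\Spec F)^{-1}(D_r)$ means $r\notin F^{-1}\mathfrak{p}'$, i.e.\ $F(r)\notin \mathfrak{p}'$. Since the sets $\{D_r\}_{r\in\mathcal{R}}$ form a basis of the topology on $\Spec\mathcal{R}$ by Lemma~\ref{lemma:spectop}, this gives continuity. Moreover, by Proposition~\ref{prop:2spectral} the sets $D_{F(r)}$ are quasi-compact, and the basis of quasi-compact opens on $\Spec\mathcal{R}$ (also given by the $D_r$) is closed under finite intersection; as every quasi-compact open is a finite union of basic opens and finite unions of quasi-compact sets are quasi-compact, preimages of quasi-compact opens are quasi-compact. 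Hence $\Spec F$ is a spectral map.

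Finally for (iv), functoriality is a tautology: $\Spec(\id_\mathcal{R})=\id_{\Spec\mathcal{R}}$ because $\id^{-1}\mathfrak{p}=\mathfrak{p}$, and $\Spec(G\circ F)=\Spec F\circ \Spec G$ because $(G\circ F)^{-1}\mathfrak{q}=F^{-1}(G^{-1}\mathfrak{q})$. The only genuinely non-formal step is (i), and within it the only point requiring attention is the preservation of twists under the pseudomonoidal $F$; this is what forces the use of the coherence isomorphisms rather than strict equalities, but once one admits that homogeneous ideals are stable under isomorphism the argument goes through without further issue.
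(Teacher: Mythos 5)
Your proof is correct and follows essentially the same route as the paper: the key content is the identity $(\Spec F)^{-1}D_r = D_{Fr}$ combined with Proposition~\ref{prop:2spectral}, and the paper relegates the routine parts of your steps (i) and (iv) to the earlier Remark~\ref{remark:spec_fct}. Your explicit treatment of the coherence isomorphism $F(\ell)\otimes F(r)\cong F(\ell\otimes r)$ when checking closure under twists is a correct filling-in of a detail the paper calls ``immediate to verify.''
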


\begin{proof}
In view of the last result it remains only to verify that $\Spec F$ is a spectral continuous map (\ie, that the preimage of a quasi-compact open is again a quasi-compact open). For this it suffices to notice that 
\begin{displaymath}
(\Spec F)^{-1}D_r= \{ \mathfrak p\in \Spec \mathcal{R}' \mid r\not\in F^{-1}\mathfrak p\}= \{ \mathfrak p\in \Spec \mathcal{R}' \mid Fr \not\in \mathfrak p\} = D_{Fr}
\end{displaymath}
for all $r\in \mathcal R$.
\end{proof}

\subsection{Localization of $\mathcal R$-algebras}
\label{subsec:loc_alg}

For our applications we will need  to localize not only 2-rings but also algebras over them, as we now explain. Thus we need to generalize Proposition \ref{prop:fractions} accordingly.
Let $\mathcal R$ be a graded commutative 2-ring. 

\begin{defi}
\label{defi:algebra}
An \emph{algebra over~$\mathcal R$} (or \emph{$\mathcal R$-algebra}) is a symmetric monoidal $\Z$-category $\mathcal A$ equipped with a additive symmetric monoidal functor $F\colon \mathcal R\to \mathcal A$. (Note that the objects of $\mathcal A$ are not required to be invertible.)
\end{defi}

\begin{notation}
Let $\mathcal R$ be a graded commutative 2-ring, let $S$ be a homogeneous multiplicative system in~$\mathcal R$, and let $F\colon \mathcal R\to \mathcal A$ be an $\mathcal R$-algebra. 
Write  $ S_\mathcal A $ for the smallest class of maps in~$\mathcal A$ containing~$FS$ and all isomorphisms of~$\mathcal A$ and which is closed under composition and twisting with objects of $\mathcal{A}$.
\end{notation}

\begin{thm}
\label{thm:fractions_general}
Let $\mathcal R$ be a graded commutative 2-ring, let $S$ be a homogeneous multiplicative system in~$\mathcal R$, and let $F\colon \mathcal R\to \mathcal A$ be an $\mathcal R$-algebra. 
Then $S_\mathcal A$ satisfies  in~$\mathcal A$ both a left and a right calculus of fractions.
\end{thm}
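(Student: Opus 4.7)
My plan is to adapt the proof of Proposition~\ref{prop:fractions} to the algebra setting, exploiting that the elements of $S_\mathcal A$ inherit, via $F$, enough of the invertibility structure of objects in $\mathcal R$ to support the same dinatural manipulations. First I would reduce to the case of \emph{atomic} denominators, namely $s \in S_\mathcal A$ of the form $a \otimes F(\tilde s)$ for some $\tilde s \in S$ and $a \in \mathcal A$, possibly composed on either side with isomorphisms of $\mathcal A$. This reduction is justified because the Ore and cancellation axioms behave well under composition of denominators---given the axioms for $s_1$ and $s_2$, one obtains them for $s_1 \circ s_2$ by two successive applications---and by construction every element of $S_\mathcal A$ is a finite composition of such atomic pieces with isomorphisms.

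For atomic $s \colon a\otimes F(g)\to a\otimes F(h)$ I would then replay diagram~(\ref{dinat_mult}) from the proof of Proposition~\ref{prop:fractions}, with the duality data of $g, h \in\mathcal R$ replaced throughout by the duality data of $F(g), F(h)\in\mathcal A$. This substitution is legitimate because $F$ is symmetric monoidal and $g, h$ are invertible in $\mathcal R$, so $F(g), F(h)$ are invertible in $\mathcal A$ and their evaluations and coevaluations $\eta_{F(g)}, \varepsilon_{F(g)}, \eta_{F(h)}, \varepsilon_{F(h)}$ are isomorphisms---precisely the input used in the original diagram. The arbitrary $\mathcal A$-objects $a$ and $y$ appearing in $r \colon a\otimes F(g)\to y$ are merely ``bystanders'' tensored through the diagram. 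Taking $m := y\otimes F(g^\vee)\otimes F(h)$ one obtains the Ore denominator $s'\colon y\to m$ as the composite of the canonical isomorphism $y\cong y\otimes F(g^\vee)\otimes F(g)$ with the twist $y\otimes F(g^\vee)\otimes F(\tilde s)$, which is manifestly in $S_\mathcal A$. The cancellation axiom is handled analogously, via the algebra version of Proposition~\ref{prop:pseudo_comm} obtained by replaying diagram~(\ref{eq:pseudo_comm}) with $F(g^\vee), F(h^\vee)$ in place of $g^\vee, h^\vee$; this is the ``slight generalization'' anticipated in the paper.

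The right calculus of fractions follows from the left calculus by duality: apply the above argument to the $\mathcal R^{\op}$-algebra $\mathcal A^{\op}$, noting that $S$ is stable under $(-)^\vee$ by Remark~\ref{rem:dual_multiplicative}, and hence so is $S_\mathcal A$. The main obstacle will be the careful bookkeeping of associator, unitor, and symmetry isomorphisms in the adapted diagram, and in particular the verification that the companion morphism $r'$ of the Ore square (constructed using the duality data of $F(g)$ together with the map $r$) yields a commuting square---this ultimately reduces, via naturality of the structural isomorphisms together with the triangle identities for $F(g)$, to the same kind of coherence argument that underlies the original diagram~(\ref{dinat_mult}).
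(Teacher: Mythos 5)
Your plan has the right global shape (reduce to ``atomic'' denominators, then run a diagram of the type used for Proposition~\ref{prop:fractions}, then dualize for the right calculus), and the reduction step via composition of denominators is sound: the paper accomplishes the same bookkeeping through the structure Lemma~\ref{lemma:S_A}, but one can equally well iterate atom by atom. The gap is in the atomic case itself, where you propose to ``replay diagram~\eqref{dinat_mult} with the duality data of $g,h\in\mathcal R$ replaced by that of $F(g),F(h)$.''

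Look again at which objects actually carry the duality in diagram~\eqref{dinat_mult}: the symbols $g$ and $h$ there denote the source and target of the \emph{arbitrary} map $r\colon g\to h$, not of the $S$-map, and the diagram furthermore invokes two dinaturality squares for $r$ itself (the arrows $hr^\vee g$ and $hr^\vee\ell$), i.e.\ it uses the dual morphism $r^\vee$. In the 2-ring setting this is fine because every morphism of $\mathcal R$ is dualizable. In the algebra setting your arbitrary map has the form $r\colon a\otimes F(g)\to y$: neither the source $a\otimes F(g)$ nor the target $y$ is invertible, and $r$ has no dual, so the diagram genuinely breaks. Swapping $F(g),F(h)$ into the slots marked $g,h$ is a misidentification of roles, since those slots correspond to $a\otimes F(g)$ and $y$, not to $F(g),F(h)$. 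The same problem afflicts your proposed ``algebra version of Proposition~\ref{prop:pseudo_comm}'': diagram~\eqref{eq:pseudo_comm} uses dinaturality for $r$ (the arrows $hr^\vee g$ and $\ell r^\vee g$) and the invertibility of the source of $r$, neither of which is available.

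What the paper does instead --- and what is the essential content of the generalization --- is to \emph{flip which morphism gets dualized}. Diagram~\eqref{dinat_mult_general} and the diagram in Lemma~\ref{lemma:ind_reduction} apply dinaturality to the $\mathcal R$-morphism $s\colon g\to h$ (using $s^\vee$, available because $\mathcal R$ is closed under duals, and $s^\vee\in S$ by Remark~\ref{rem:dual_multiplicative}), and only require $g$ and $h$ --- the genuine $\mathcal R$-objects occurring inside the twist --- to be invertible; the bystander objects $a$ and $y$ are never asked to be dualizable, and $r$ is only ever twisted, never dualized. This requires drawing a new diagram with the roles of $r$ and $s$ interchanged (plus an extra use of the symmetry $\gamma$ to move the tensor factors around), not a substitution into the old one. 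Your plan, as written, would fail at exactly the point where you need $\eta_{a\otimes F(g)}$ to be an isomorphism and $r^\vee$ to exist.

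A smaller remark: for the right calculus you appeal to $(-)^\vee$ stabilizing $S_\mathcal A$, but $\mathcal A$ is not self-dual (only $\mathcal R$ is), so one cannot literally dualize the argument inside $\mathcal A$. The right-fraction axioms are instead obtained by mirror-reflecting the diagrams, which works because $S_\mathcal A$ is symmetric in left and right twists by the symmetry of $\otimes$.
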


\begin{proof}
To begin with, notice that $S_\mathcal A$ would remain the same if we substitute $\mathcal R$ with the full subcategory on the replete closure of its image $F\mathcal R$, and $S$ with the homogeneous multiplicative system generated by the images of maps in~$S$. Thus without loss of generality we may assume that $\mathcal R$ is a full replete subcategory of~$\mathcal A$.

Let us verify that $S_\mathcal A$ satisfies a calculus of left fractions (the proof for right fractions is dual and will be omitted). 
Since by definition $S_\mathcal A$ contains all identity maps and is closed under composition, it remains to verify conditions (1) and~(2) as in the proof of Proposition~\ref{prop:fractions}.

We see that the set $S_\mathcal A$ consists precisely of finite composites of maps in~$\mathcal A$ which are either invertible or belong to~$\{x\otimes s \mid s\in S, x\in \obj \mathcal A\}$ (or alternatively, to $\{s\otimes x\mid s\in S, x\in \obj \mathcal A\}$).
Thus to prove~(1) it evidently suffices to consider diagrams $\smash{\bullet \leftarrow \bullet \rightarrow \bullet}$ where the map $(\bullet \leftarrow \bullet)\in S_\mathcal A$ is a twist in $\mathcal A$ of a map of~$S$. 
Accordingly, assume we are given two maps
\[
\xymatrix{
{\phantom{g} xh} & {\!\!\! \phantom{h} xg} \ar[l]_-{xs} \ar[r]^-r & {\!\!\! \phantom{h} y} 
}
\]
for some $s\in S\subseteq \mathcal R$ and $x,y\in \obj \mathcal A$. 
In order to complete them to a square as required, draw the following commutative diagram, which is similar to~\eqref{dinat_mult}.
\begin{equation} 
\label{dinat_mult_general}
\xymatrix@R=8pt@C=8pt{
&
 *+[F]{xh} &
  &
    *+[F]{xg} \ar[ll]_-{xs} \ar[rr]^-{r} &&
     *+[F]{y} \\
&&&&& \\
&
 \unit xh \ar[uu]^\sim_{\lambda_{xh}} &&
  \unit xg \ar[ll]_-{\unit xs} \ar[uu]^{\lambda_{xg}} \ar[rr]^-{\unit r} &&
   \unit y \ar[uu]_-\sim^-{\lambda_y} \\
&&&&& \\
&
 hh^\vee xh \ar[uu]_{\varepsilon_{h^\vee} xh}^\sim \ar[dl]^<<<{\gamma h}_-\sim &&
  hh^\vee xg \ar[ddrr]|{hs^\vee r} \ar[uu]^{\varepsilon_{h^\vee} xg} \ar[ll]_-{hh^\vee xs} \ar[rr]^-{hh^\vee r} \ar[dl]_<<<<<<{\gamma g} \ar[dd]|{hs^\vee xg \;\; } &&
   hh^\vee y \ar[uu]_\sim^{\varepsilon_{h^\vee} y} \ar[dd]^{hs^\vee y} \\
xhh^\vee h \ar[dd]_\sim^{xh \varepsilon_{h}} &&
 xhh^\vee g \ar[ll]^-{xhh^\vee s} \ar[dd]_{xh s^\vee g} &&& \\
&&& hg^\vee xg \ar[rr]_{hg^\vee r} \ar[dl]^\sim_{\gamma g} &&
 *+[F]{hg^\vee y} \\
xh\unit && xhg^\vee g \ar[ll]^-\sim_-{xh \varepsilon_{g}} &&&
}
\end{equation}
The top two squares commute by naturality of~$\lambda$; the middle-bottom skew one by the naturality of~$\gamma$; the bottom-left one by applying $xh\otimes-$ to a dinaturality square for~$s$, and all the remaining squares by functoriality of the tensor. That each map marked $\sim$ is an isomorphism is either clear or follows from the fact that both $g$ and $h$ are invertible, which is the case since $s\in S$.
If we compose maps between the objects in boxes, the outer frame of \eqref{dinat_mult_general} becomes a commutative square
\[
\xymatrix{
xg \ar[d]_{xs} \ar[r]^{r} &
 y \ar[d]^{s'} \\
xh \ar[r] & hg^\vee y
}
\]
with $s'\in S_\mathcal A$ (by Remark \ref{rem:dual_multiplicative}). This proves~(1).
It remains to prove condition~(2) for $S_\mathcal A$, which reads as follows:
\begin{itemize}
\item[(2)] Given two morphisms $r\colon x\to y$ and $s\colon y\to z$ such that $s\in S_\mathcal A$ and $sr=0$, then there is an $s'\in S_{\mathcal A}$ such that $rs'=0$.
\end{itemize}
 
We first prove the following special case of~(2).

\begin{lemma}
\label{lemma:ind_reduction}
Consider two composable maps $r\colon x \to g\otimes y$ and $s\otimes y\colon g\otimes y\to h \otimes y$ such that  $(s\colon g\to h)\in S$ and $(s\otimes y)r=0$. Then there is an $s'\in S_\mathcal A$ with $rs'=0$. 
\end{lemma}

\begin{proof}
Given such $r$ and~$s$, build the following commutative diagram, whose similarity with \eqref{dinat_mult_general} will not be missed:
\begin{equation*}
\xymatrix@R=8pt@C=8pt{
x \ar[rr]^-r \ar@/^4ex/[rrrr]^-0 && gy \ar[rr]^-{sy} && hy & \\
&&&&& \\
x\unit \ar[uu]^\sim_\rho \ar[rr]^-{r\unit} &&
 gy\unit \ar[uu]_\rho \ar[rr]^-{sy\unit} &&
  hy\unit \ar[uu]_\sim^\rho & \\
&&&&& \\
*+[F]{xh^\vee h} \ar[uu]^\sim_\varepsilon \ar[rr]^-{rh^\vee h} \ar[dd]_{xs^\vee h} &&
 gyh^\vee h \ar[uu]_\varepsilon \ar[rr]^-{syh^\vee h} \ar[rd]_-{g\gamma} \ar[dd]_{gys^\vee h} &&
  hyh^\vee h \ar[uu]_\sim^\varepsilon \ar[rd]^\sim_<<{h\gamma} & \\
&&& gh^\vee hy \ar[rr]_-{sh^\vee hy} \ar[dd]^{gs^\vee hy} &&
 hh^\vee hy \ar[dd]_{\varepsilon hy}^\sim \\
*+[F]{xg^\vee h} \ar[rr]_-{rg^\vee h} &&
 *+[F]{gyg^\vee h} \ar[dr]_\sim^{g\gamma} &&& \\
&&& gg^\vee hy \ar[rr]_-\sim^-{\varepsilon hy} && \unit hy
}
\end{equation*}
Here, the two top squares commute by the naturality of~$\rho$, the middle-bottom skew one by the naturality of~$\gamma$, the bottom-right one by applying $-\otimes hy$ to a dinaturality square for~$s$, and all the other squares  by the functoriality of the tensor. Again the maps marked~$\sim$ are all isomorphisms since both $g$ and $h$ are $\otimes$-invertible, which is the case since $s\in S$.
The outer frame of the diagram tells us that the two composable arrows between the framed objects, namely
\[
\xymatrix{
xh^\vee h \ar[r]^-{xs^\vee h} & xg^\vee h \ar[r]^-{rg^\vee h} & gyg^\vee h
} \;,
\]
compose to zero. Note also that $s'':= xs^\vee h \in S_\mathcal A$, since $s\in S$.
Now it suffices to untwist this composition by the invertible object~$g^\vee h$.
More precisely, the following commutative diagram
\begin{equation*}
\xymatrix{
xh^\vee (g^\vee h)^\vee \ar[rr]_-{xs^\vee h(g^\vee h)^\vee} \ar@/^4ex/[rrrr]^-{0} \ar@/_5ex/[ddrr]_{s'\,:=} &&
 xg^\vee h (g^\vee h)^\vee \ar[rr]_-{rg^\vee h (g^\vee h)^\vee} \ar[d]_{x\varepsilon} &&
  gyg^\vee h (g^\vee h)^\vee \ar[d]_{gy\varepsilon}^\sim  \\
&& x \unit \ar[rr]_-{r\unit} \ar[d]_\rho && gy\unit \ar[d]_\rho^\sim \\
&& x \ar[rr]_-r && gy
}
\end{equation*}
defines a morphism $s'\in S_\mathcal A$ such that $r s' = 0$, as required. 
\end{proof}

\begin{lemma}
\label{lemma:S_A}
Every $(s\colon y\to z)\in S_\mathcal A$ is a finite composition of the form
\[
\xymatrix{
y \ar[r]^-{u_0}_-\sim &
 g_1a \ar[r]^-{s_1a} &
  h_1 a \ar[r]^-{u_1}_-\sim &
   \cdots 
    g_ia \ar[r]^-{s_i a} & 
     h_i a \ar[r]^-{u_i}_-\sim &
      g_{i+1} a \cdots \ar[r]^-{ s_na} &
       h_n a \ar[r]^-{u_n}_\sim & 
       z
}
\]
where $a$ is some object of~$\mathcal A$, each $s_i \colon g_i\to h_i$ belongs to $S$ (so in particular $g_i ,h_i$ are invertible objects in~$\mathcal R$), and each $u_i$ is an isomorphism.
\end{lemma}

\begin{proof}
If $(s\colon y\to z) \in S_\mathcal A$, then $s$ must be a finite composite of the form
\[
\xymatrix{
y \ar[r]^-{v_0}_-\sim &
 \tilde g_1a_1 \ar[r]^-{\tilde s_1a_1} &
  \tilde h_1 a_1 \ar[r]^-{v_1}_-\sim &
   \cdots 
    \tilde g_ia_i \ar[r]^-{\tilde s_i a_i} & 
     \tilde h_i a_i \ar[r]^-{v_i}_-\sim &
       \cdots \ar[r]^-{ \tilde s_na_n} &
       \tilde h_n a_n \ar[r]^-{v_n}_-\sim & 
       z
}
\]
for some isomorphisms $v_0, \ldots, v_n$, some $\tilde s_1,\ldots , \tilde s_n\in S$, and some objects $a_1,\ldots a_n$.
(To see this, it suffices to note that twists in $\mathcal A$ preserve isomorphisms, and that every left twist $x\otimes r$ of an arrow~$r$ may be turned into a right twist composed with two isomorphisms, namely $\gamma (r\otimes x) \gamma$.)
We deduce in particular from the isomorphisms $v_i\colon \tilde h_ia_i \cong\tilde  g_{i+1}a_{i+1}$ that there exist isomorphisms $a_{i+1}\cong \tilde g_{i+1}^\vee \tilde  h_i a_i $ and therefore, recursively, that there exist isomorphisms
\[
w_i \colon a_i \stackrel{\sim}{\longrightarrow} (\underbrace{ \tilde g_i^\vee\tilde  h_{i-1} \tilde g_{i-1}^\vee\tilde  h_{i-2} \cdots \tilde g^\vee_2\tilde  h_1 }_{=:\; \ell_i}) a_1
\]
for $i=1,\ldots, n$ (use $\ell_1=\unit$ when $i=1$). 
Setting $a:=a_1$, as well as defining $\ell_i$ as above and $u_i$ and $s_i$ as displayed below, we obtain the following commutative diagram, where the top row is the given map~$s$.
\[
\xymatrix@C=22pt{
y \ar[r]^-{v_0}_-\sim  &
 \tilde g_1a_1 \ar[r]^-{\tilde s_1 a_1} \ar[d]^{\tilde  g_1w_1}_\sim &
  \tilde h_1 a_1 \ar[r]^-{v_1}_-\sim \ar[d]^{\tilde  h_1w_1}_\sim &
   \cdots 
    \tilde g_i a_i \ar[r]^-{\tilde s_i a_i} & 
     \tilde h_i a_i \ar[r]^-{v_i}_-\sim &
       \cdots \ar[r]^-{ \tilde s_n a_n} &
       \tilde h_n a_n \ar[r]^-{v_n}_\sim & 
       z \\
y \ar@{=}[u] \ar@{..>}[r]^-{u_0} &
 \tilde g_1\ell_1 a  \ar@{..>}[r]^-{s_1 a} &
 \tilde  h_1\ell_1 a  \ar@{..>}[r]^-{u_1} &
  \cdots \tilde  g_i \ell_i a \ar@{<-}[u]_{\tilde g_i w_i}^\sim \ar@{..>}[r]^-{s_i a} &
  \tilde h_i\ell_i a \ar@{<-}[u]_{\tilde h_i w_i}^\sim \ar@{..>}[r]^-{u_i} &
   \cdots \ar@{..>}[r]^-{s_n a} &
    \tilde h_n\ell_n a  \ar@{<-}[u]_{\tilde h_n w_i}^\sim \ar@{..>}[r]^-{u_n} &
     z \ar@{=}[u] 
}
\]
Note that each $s_i:= \tilde s_i \otimes \ell_i $ belongs to~$S$ (because $\tilde s_i\in S$ and $\ell_i\in \mathcal R$) 
and that each $u_i$ is an isomorphism. 
If we further set $g_i:=\tilde g_i \otimes \ell_i$ and $h_i:= \tilde h_i\otimes \ell_i$ we see from the bottom row of the diagram that $s$ has the claimed form.
\end{proof}

We are now ready to verify property (2) for general morphisms of~$S_\mathcal A$. 
The proof is an easy recursion (probably best drawn on the blackboard).
Let $r,s$ be as in~(2). 
Since $s\in S_\mathcal A$, by Lemma \ref{lemma:S_A} we have 
\[
s = u_n (s_n \otimes a) u_{n-1} (s_{n-1}\otimes a) \cdots u_1 (s_1\otimes a) u_0 
\colon y \longrightarrow z
\]
for some object $a\in \mathcal A$, some isomorphisms~$u_i$, and some $s_i\colon g_i\to h_i$ in~$S$.
By hypothesis we have $sr=0$. 
Since $u_n$ is an isomorphism, this implies
\begin{align*}
0 &=  \big( (s_n \otimes a) u_{n-1} (s_{n-1}\otimes a) \cdots u_1 (s_1\otimes a) u_0 \big) \circ r \\
 &= (s_n \otimes a)\circ \big( \underbrace{u_{n-1} (s_{n-1}\otimes a) \cdots (s_1\otimes a) u_0r}_{=: \; r_n } \big) \,.
\end{align*}
Since $s_n\in S$, we can apply Lemma \ref{lemma:ind_reduction} to deduce the existence of some $s'_n\in S_{\mathcal A}$ with $r_ns_n'=0$. 
Since $u_{n-1}$ is an isomorphism, we actually have
\begin{align*}
0 &=   \big( (s_{n-1} \otimes a) u_{n-2} (s_{n-2}\otimes a) \cdots u_1 (s_1\otimes a) u_0 r \big) \circ s'_n \\
 &=  (s_{n-1} \otimes a)\circ \big(\underbrace{ u_{n-2} (s_{n-2} \otimes a)u_{n-3} (s_{n-3}\otimes a) \cdots (s_1\otimes a)u_0rs'_n}_{=: \; r_{n-1} } \big)
\end{align*}
and we may now iterate: by applying the same argument $n-1$ more times we successively produce morphisms $s'_{n-1},s'_{n-2},\ldots, s'_1$ such that  the composition 
$s':= s_n's'_{n-1}s'_{n-2} \ldots s'_1 $ belongs to 
$S_{\mathcal A}$ and satisfies $rs'=0$, as required.

This concludes the proof of Theorem~\ref{thm:fractions_general}.
\end{proof}

We next compare the localization of 2-rings with that of their algebras.

\begin{lemma}
\label{lemma:S_AvsA}
Let $F\colon \mathcal R\to \mathcal A$ be an $\mathcal R$-algebra, let $S\subseteq \Mor \mathcal R$ be a homogeneous multiplicative system, and let $S_\mathcal A\subseteq \Mor \mathcal A$ be its extension to~$\mathcal A$. 
Assume that $F$ is a full functor whose image is replete.
Then if $s\colon x\to y$ is a morphisms of $S_\mathcal A$ such that either $x$ or $y$ belongs to~$F\mathcal R$, we must have $s\in FS$.
\end{lemma}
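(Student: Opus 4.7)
The plan is to exploit the normal form for morphisms of $S_\mathcal A$ provided by Lemma \ref{lemma:S_A}, combined with the reduction already used at the start of the proof of Theorem \ref{thm:fractions_general}. First I would replace $\mathcal R$ by (a skeleton of) the replete closure of its image in $\mathcal A$, and $S$ by the homogeneous multiplicative system it generates there; this does not alter $S_\mathcal A$, and reduces the statement to the case where $F$ is just the inclusion of a full replete tensor subcategory $\mathcal R\subseteq \mathcal A$. Under this reduction, $FS = S$ and the claim becomes: a morphism $s\colon x\to y$ in $S_\mathcal A$ whose source or target lies in $\mathcal R$ already belongs to $S$.

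Next, I would apply Lemma \ref{lemma:S_A} to factor $s$ as
\[
x \xrightarrow{\,u_0\,} g_1\otimes a \xrightarrow{\,s_1\otimes a\,} h_1\otimes a \xrightarrow{\,u_1\,} \cdots \xrightarrow{\,s_n\otimes a\,} h_n\otimes a \xrightarrow{\,u_n\,} y
\]
with $s_i\colon g_i\to h_i$ in $S$, each $u_i$ an isomorphism in $\mathcal A$, and $a\in \obj \mathcal A$. The crux of the argument is to show that $a$ actually lives in $\mathcal R$. Assume say $x\in \mathcal R$ (the case $y\in \mathcal R$ is symmetric): then $u_0$ is an isomorphism with $x\in \mathcal R$, so by repleteness $g_1\otimes a\in \mathcal R$; since $g_1$ is an invertible object of $\mathcal R$ and $\mathcal R$ is a tensor subcategory of $\mathcal A$, the object $g_1^\vee\otimes(g_1\otimes a)$ is in $\mathcal R$, and this object is canonically isomorphic to $a$, so $a\in \mathcal R$ by repleteness once more.

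With $a\in \mathcal R$ secured, each intermediate object $g_i\otimes a$ and $h_i\otimes a$ lies in $\mathcal R$, each twist $s_i\otimes a$ lies in $S$ by closure of $S$ under twisting (Definition \ref{defi:mult_set}), and each $u_i$ is an isomorphism of $\mathcal A$ between objects of the \emph{full} subcategory $\mathcal R$, hence an isomorphism of $\mathcal R$ and therefore a member of $S$. The full composite $s$ is then a composition of elements of $S$ and so belongs to $S$ by closure under composition. I do not expect any real obstacle here; the only delicate point is the propagation step from ``$x\in \mathcal R$'' to ``$a\in \mathcal R$'', which rests entirely on repleteness plus invertibility of $g_1$, and this is where the hypotheses on $F$ and on the graded commutative 2-ring structure of $\mathcal R$ are essential.
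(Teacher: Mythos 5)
Your proof is correct and follows essentially the same route as the paper's: reduce to the case where $F$ is the inclusion of a full replete tensor subcategory, apply Lemma~\ref{lemma:S_A} to put $s$ in normal form, show $a\in\mathcal R$ by tensor-cancellation and repleteness, and then read off that each factor lies in $S$. (The paper happens to start from the target $y$ via $a\cong h_n^{-1}y$ whereas you start from the source $x$ via $a\cong g_1^\vee(g_1\otimes a)$ — a symmetric variant.) One small slip: passing to ``a skeleton of'' the replete closure would destroy repleteness, which your argument needs; you should simply take the replete closure itself, as you in fact do in the remainder of the argument.
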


\begin{proof}
The statement only concerns the images of $\mathcal R$ and $S$ in~$\mathcal A$, hence we may assume $F$ is the inclusion of a full and replete $\otimes$-subcategory $\mathcal R$ of invertible objects of~$\mathcal A$. 
Let $(s \colon x\to g)\in S_\mathcal A$ with $g\in \mathcal R$ (the proof for the other case is dual and is omitted). 
By Lemma~\ref{lemma:S_A} the morphism~$s$  is equal to a composite
\[
\xymatrix{
x \ar[r]^-{u_0}_-\sim &
 g_1a \ar[r]^-{s_1a} &
  h_1 a \ar[r]^-{u_1}_-\sim &
   \cdots 
    g_ia \ar[r]^-{s_i a} & 
     h_i a \ar[r]^-{u_i}_-\sim &
      g_{i+1} a \cdots \ar[r]^-{ s_na} &
       h_n a \ar[r]^-{u_n}_\sim & 
       g
}
\]
where the maps $s_i$ belong to~$S$, the maps $u_i$ are isomorphisms, and the objects $g_i,h_i$ belong to~$\mathcal R$.
We see immediately that $a \cong h_n^{-1}g$ lies in $\mathcal R$, and consequently so does $x\cong g_1a$.
Also, since $a$ is in $\mathcal R$ each map $s_ia \colon g_ia\to h_ia$ belongs to~$S$. 
By the fullness of~$\mathcal R$, the isomorphisms $u_i$ belong to $\mathcal R$ and therefore to~$S$. Hence the composition~$s$ belongs to~$S$, proving the lemma.
\end{proof}

\begin{prop}
\label{prop:comparison_mult_sys}
Let $F\colon \mathcal R\to \mathcal A $ be an $\mathcal R$-algebra and let $S\subseteq \mathcal R$ be a homogeneous multiplicative system. 
Then the unique canonical $\otimes$-functor 
$\overline F$
which makes the following square commute 
\[
\xymatrix{ \mathcal R \ar[d] \ar[r]^-F & \mathcal A \ar[d] \\
S^{-1}\mathcal R \ar[r]^-{\overline F} &
S^{-1}_\mathcal A \mathcal A
 }
\]
is full (fully faithful) if $F$ is full (fully faithful).
\end{prop}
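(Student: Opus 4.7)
The plan is to combine the two-sided calculus of fractions from Theorem~\ref{thm:fractions_general} with the structural Lemma~\ref{lemma:S_AvsA}. Following the reduction device used in the proof of Theorem~\ref{thm:fractions_general}, I would first reduce without loss of generality to the case where $F\colon \mathcal{R} \hookrightarrow \mathcal{A}$ is the inclusion of a full, replete tensor subcategory, placing us squarely in the hypotheses of Lemma~\ref{lemma:S_AvsA}. Under this reduction both hypotheses on $F$ collapse to ``inclusion'', so the sharper statement I would prove is that $\overline{F}$ is fully faithful; the general full (resp.\ fully faithful) case is handled by factoring $F=\iota\circ F_0$ through the replete closure $\mathcal{R}'\subseteq\mathcal{A}$ of its image, where $\overline{\iota}$ falls under the reduced case and $\overline{F_0}\colon S^{-1}\mathcal{R}\to(S')^{-1}\mathcal{R}'$ inherits the same property from $F_0$ by an auxiliary argument.

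For \emph{fullness}, I would take a morphism $\phi\colon g\to h$ in $S_\mathcal{A}^{-1}\mathcal{A}$ between objects of $\mathcal{R}$ and represent it as a left fraction $\sigma^{-1}\alpha$ with $\sigma\colon h\to x$ in $S_\mathcal{A}$ and $\alpha\colon g\to x$ in $\mathcal{A}$. Since the source $h$ of $\sigma$ lies in the full replete subcategory $\mathcal{R}$, Lemma~\ref{lemma:S_AvsA} forces $\sigma\in S$, and in particular $x\in\mathcal{R}$; then $\alpha\colon g\to x$ is itself a morphism of $\mathcal{R}$ by the fullness of the inclusion. The very same fraction $\sigma^{-1}\alpha$ thus represents a morphism in $S^{-1}\mathcal{R}$ mapping to $\phi$ under $\overline{F}$.

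For \emph{faithfulness}, suppose $s_1^{-1}r_1$ and $s_2^{-1}r_2$ in $S^{-1}\mathcal{R}$ have equal images in $S_\mathcal{A}^{-1}\mathcal{A}$. The calculus of left fractions provides morphisms $\tau_1,\tau_2$ in $\mathcal{A}$ with $\tau_1 s_1=\tau_2 s_2\in S_\mathcal{A}$ and $\tau_1 r_1=\tau_2 r_2$. Lemma~\ref{lemma:S_AvsA} applied to $\tau_1 s_1\in S_\mathcal{A}$, whose source lies in $\mathcal{R}$, gives $\tau_1 s_1\in S$; consequently the common codomain of $\tau_1,\tau_2$ lies in $\mathcal{R}$, and $\tau_1,\tau_2$ themselves are morphisms of $\mathcal{R}$ by fullness. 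Faithfulness of the inclusion then lifts the two equalities from $\mathcal{A}$ back to $\mathcal{R}$, exhibiting $s_1^{-1}r_1\sim s_2^{-1}r_2$ in $S^{-1}\mathcal{R}$. The main obstacle is in fact not the core argument just sketched (which is a direct application of Lemma~\ref{lemma:S_AvsA} together with the calculus of fractions), but rather the auxiliary claim on $\overline{F_0}$ needed in the fullness-only case: one invokes Lemma~\ref{lemma:S_A} to decompose an arbitrary element of $S'$ into twists of $F_0(S)$-images and isomorphisms in $\mathcal{R}'$, and then lifts everything back to $\mathcal{R}$ using the fullness and essential surjectivity of $F_0$---a technical but essentially bookkeeping step.
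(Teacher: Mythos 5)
Your treatment of the main case — reduce to $F$ being the inclusion of a full replete tensor subcategory, represent a morphism of $S_\mathcal{A}^{-1}\mathcal{A}$ between objects of $\mathcal{R}$ by a fraction, apply Lemma~\ref{lemma:S_AvsA} to conclude the denominator lies in $S$, and conclude via fullness — is correct and coincides with the paper's argument. (The paper happens to use a right fraction for fullness and shows a single fraction equals zero for faithfulness, whereas you use a left fraction and the two-fraction formulation of equality; these are equivalent for $\Z$-linear localizations.)

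The step you flag as "the main obstacle" is, however, where your sketch has a genuine gap. To show $\overline{F_0}\colon S^{-1}\mathcal{R}\to (S')^{-1}\mathcal{R}'$ is full when $F_0$ is full but possibly \emph{not} faithful, decomposing $s'\in S'$ via Lemma~\ref{lemma:S_A} and "lifting everything back to $\mathcal{R}$" does not go through: the decomposition involves isomorphisms of $\mathcal{R}'$, and a lift of such an isomorphism along the merely full functor $F_0$ need not be an isomorphism of $\mathcal{R}$, nor need it lie in $S$ (since $S$ is not assumed saturated). So the lifted composite is not a legal denominator in $S^{-1}\mathcal{R}$. The paper sidesteps this entirely by appealing to Corollary~\ref{cor:SvsI}, which gives $(FS)^{-1}F\mathcal{R}\cong S^{-1}\mathcal{R}/S^{-1}\ker(F)$; the map $S^{-1}\mathcal{R}\to (FS)^{-1}F\mathcal{R}$ is then full for free, being a quotient functor. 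That compatibility between localization and passage to a quotient 2-ring is the ingredient your sketch is missing.
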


\begin{proof}
We first assume that $F$ is fully faithful. Note that in this case we may further assume that $F$ is the inclusion of a full replete subcategory of~$\mathcal A$, the multiplicative systems arising from $F\mathcal{R}$ and its replete closure being identical. Let
\[
\xymatrix{
g & \ar[l]_-s^-{\sim} x \ar[r]^-f & h
}
\]
be a right fraction representing a morphism in $S^{-1}_\mathcal A\mathcal A$ such that $g,h\in \mathcal R$.
Since $g\in \mathcal R$, Lemma~\ref{lemma:S_AvsA} says that $s$ belongs to~$S$, showing that the fraction $fs^{-1}$ defines a morphism $g\to h$ in $S^{-1}\mathcal R$ as well. This proves that the functor $\overline F$ is full.
Next consider a fraction
\[
\xymatrix{
g & \ar[l]_-t^-\sim \ell \ar[r]^-r & h
}
\]
(with $t\in S$) representing a morphism of $S^{-1}\mathcal R$ which is mapped to zero in $S_\mathcal 
A^{-1}\mathcal A$.
The latter means that there exists in~$\mathcal A$ a commutative diagram
\begin{equation}
\label{eq:zeros}
\xymatrix@R=5pt{
& \ell \ar[dl]_t \ar[dr]^r & \\
g && h \\
& x \ar[ul]^-s \ar[ur]_0 \ar[uu] &
}
\end{equation}
for some $s\in S_\mathcal A$. By Lemma \ref{lemma:S_AvsA} once again, we must have $s\in S$.
But then \eqref{eq:zeros} means precisely that $rt^{-1}=0$ in $S^{-1}\mathcal R$. So $\overline F$ is fully faithful.

The more general case, where $F$ is assumed to be full but possibly not faithful, can be reduced to the previous one as follows. 
By factoring $F$ through its image, localization induces the commutative diagram
\[
\xymatrix{ 
\mathcal R \ar[d] \ar@/^3ex/[rr]^-F \ar[r]_-{\textrm{full}} &
 F\mathcal R \ar[r]_-{\textrm{faithful}} \ar[d] &
  \mathcal A \ar[d] \\
S^{-1}\mathcal R \ar@/_3ex/[rr]_-{\overline F} \ar[r] &
 (FS)^{-1}F\mathcal R \ar[r] &
S^{-1}_\mathcal A \mathcal A
 }
\]
(here $FS$ denotes the multiplicative system in $F\mathcal R$ generated by~$S$).
By Corollary \ref{cor:SvsI} (with $\mathcal I=\ker(F)$) we have the identification $(FS)^{-1} F\mathcal R \cong S^{-1}\mathcal R/S^{-1}\ker(F)$, from which we see that $S^{-1}\mathcal R\to (FS)^{-1}F\mathcal R$ is full.
Thus it remains only to verify that the functor $(FS)^{-1}F\mathcal R\to S^{-1}_\mathcal A\mathcal A$ is full, and this follows from what we have already proved.
\end{proof}

\section{Generalized comparison maps}
\label{sec:comparison}

\subsection{Central 2-rings of tensor triangulated categories}
\label{subsec:central2rings}

From now on we work with an essentially small tensor triangulated category~$\mathcal T$; thus $\mathcal T$ is essentially small, triangulated, and equipped with a symmetric tensor structure $(\mathcal T,\otimes,\unit,\alpha,\lambda,\rho, \gamma)$ such that $x\otimes-$ (and thus $-\otimes x$) preserves exact triangles for each object $x\in \mathcal T$.

\begin{defi}
By a \emph{central 2-ring of~$\mathcal T$} we mean any full tensor subcategory $\mathcal R$ of invertible objects of $\mathcal T$ which is closed under taking duals.
Thus every central 2-ring of $\mathcal T$ is a graded commutative 2-ring, as studied in the previous section.
\end{defi}

\begin{examples}
\label{ex:central2rings}
At one extreme we find $\mathcal R=\{\unit\}$, that is, the commutative endomorphism ring of the tensor unit, $\End_\mathcal T(\unit)$. In Balmer's notation this is the \emph{central ring} $\mathrm R_\mathcal T$ of~$\mathcal T$. At the other extreme we may choose $\mathcal R$ to be the full subcategory of all invertible objects in $\mathcal T$, which deserves the name of \emph{total central 2-ring of $\mathcal T$}, written $\mathrm R^\tot_\mathcal T$. 
Between $\mathrm R_\mathcal T$ and $\mathrm R^\tot_\mathcal T$ we find a poset of central 2-rings, ordered by inclusion, which in fact is a lattice with meet $\mathcal R\wedge \mathcal R'=\mathcal R\cap \mathcal R'$ and join $\mathcal R\vee \mathcal R' =\bigcap \{\mathcal R'' \mid \mathcal R\cup \mathcal R'\subseteq \mathcal R''\}$. 
Every inclusion $\mathcal R\hookrightarrow \mathcal R'$ is a morphism of graded commutative 2-rings and so  it induces a continuous map $\Spec \mathcal R'\to \Spec \mathcal R$.
\end{examples}

The next theorem is the key point in allowing us to relate the geometry of a $\otimes$-triangulated category to that of its central rings.

\begin{thm}
\label{thm:local_Rtot}
If $\mathcal T$ is a local $\otimes$-triangulated category (\ie, if its spectrum has a unique closed point~\cite{balmer:spec3}), then every central 2-ring $\mathcal R$ of $\mathcal T$ is local as a graded commutative 2-ring, \ie, it has a unique maximal homogeneous ideal. 
Moreover, this maximal ideal consists precisely of the non-invertible arrows of~$\mathcal R$.
\end{thm}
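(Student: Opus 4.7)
Let $\mathfrak m \subseteq \Mor\mathcal R$ denote the collection of non-invertible morphisms of $\mathcal R$. Since every proper homogeneous ideal of $\mathcal R$ avoids isomorphisms (Remark~\ref{rem:ideals}(3)), it will automatically be contained in $\mathfrak m$. So it suffices to show that $\mathfrak m$ is itself a (proper) prime homogeneous ideal: maximality, and uniqueness among maximal homogeneous ideals, follows formally.

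Most of the ideal axioms for $\mathfrak m$ are routine. Closure under twisting by an invertible object $\ell$ is immediate from the fact that $\ell \otimes r$ invertible would imply $r \cong \ell^\vee \otimes \ell \otimes r$ is invertible. Closure under pre- and post-composition with arbitrary morphisms of $\mathcal R$ is exactly the content of Corollary~\ref{cor:composite_iso}. The primality of $\mathfrak m$, once it is known to be an ideal, is immediate from the same observation: composites of isomorphisms are isomorphisms, so if $s \circ r \in \mathfrak m$ then one of $s,r$ must be in $\mathfrak m$.

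The heart of the proof is closure of $\mathfrak m$ under addition. Given non-invertible $r_1, r_2\colon g \to h$, I would argue by contradiction, supposing $r_1 + r_2$ is an isomorphism. Set $e_i := r_i \circ (r_1+r_2)^{-1} \in \End_\mathcal T(h)$, so that $e_1 + e_2 = \id_h$; the equation $r_i = e_i \circ (r_1+r_2)$ shows that $e_i$ is invertible iff $r_i$ is, so both $e_i$ are non-invertible. Since $h$ is invertible in $\mathcal T$, the ring $\End_\mathcal T(h)$ is canonically isomorphic, via the evaluation and coevaluation for $h$, to the central ring $R_\mathcal T := \End_\mathcal T(\unit)$. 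Importing from Balmer's work \cite{balmer:spec3} the base case of the present theorem, namely that $R_\mathcal T$ is a local commutative ring whenever $\mathcal T$ is a local tensor triangulated category, the equation $\tilde e_1 + \tilde e_2 = 1$ in $R_\mathcal T$ forces one of the $\tilde e_i$ (hence one of the $e_i$, hence one of the $r_i$) to be a unit, contradicting non-invertibility.

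The sole obstacle is this sum-closure step; the rest of the argument merely categorifies the standard fact that the non-units of a commutative local ring form its maximal ideal. All real content resides in the base case $\mathcal R = \{\unit\}$, i.e., in the locality of $R_\mathcal T$, which must be imported from \cite{balmer:spec3} rather than derived from the theorem itself on pain of circularity. Should a self-contained argument be preferred, one would instead have to work directly in $\mathcal T$ with the cofiber triangles of $r_1, r_2, r_1+r_2$ and the primality of $\sqrt{(0)}$, but the reduction to the ordinary commutative ring case via $\End_\mathcal T(h) \cong R_\mathcal T$ is by far the cleanest route.
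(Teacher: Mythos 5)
Your proof is correct, and the reduction of the sum-closure step via the ring isomorphism $\End_\mathcal{T}(h)\cong\End_\mathcal{T}(\unit)$ is a genuinely different and cleaner route than the one the paper takes. The paper does not import Balmer's Theorem~4.5 as a black box; instead it redoes the argument from scratch at the level of an arbitrary morphism $r+s\colon g\to h$ between invertible objects, considering the morphism $(r+s)\otimes\id_{\cone(r)}\otimes\id_{\cone(s)}$, showing it is simultaneously invertible and $\otimes$-nilpotent (using Lemmas~\ref{lemma:nilp_iso} and~\ref{lemma:nilp_sum} together with \cite{balmer:spec3}*{Proposition~2.13}), deducing $\cone(r)\otimes\cone(s)\cong 0$, and then invoking the locality of $\mathcal T$ to force one of the cones to vanish. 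Your argument skips all this by observing that for an invertible object $h$ the functor $h^\vee\otimes-$ induces a ring isomorphism $\End_\mathcal{T}(h)\cong\End_\mathcal{T}(\unit)=R_\mathcal{T}$, so that the equation $e_1+e_2=\id_h$ with both $e_i$ non-invertible would contradict the locality of $R_\mathcal{T}$. This is valid and, to my mind, more economical: it isolates exactly where the locality of $\mathcal T$ is used (in Balmer's result for $R_\mathcal T$), at the cost of treating that result as an external input. The paper's choice is the reverse trade: it avoids citing the base case and instead re-runs the tensor-nilpotence machinery so that Theorem~\ref{thm:local_Rtot} becomes a self-contained generalization of \cite{balmer:spec3}*{Theorem~4.5}. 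Both are sound; yours would have been a perfectly acceptable proof in the paper, with the minor caveat that invoking Balmer's theorem should come with the explicit observation that a local $\otimes$-triangulated category is nonzero, hence so is $\mathcal R$, so that $\mathfrak m$ is indeed a \emph{proper} ideal.
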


The proof is essentially the same as that of \cite{balmer:spec3}*{Theorem 4.5}, but a few slight adjustments are required. 
We need a couple of lemmas concerning tensor nilpotent morphisms.

\begin{lemma}
\label{lemma:nilp_iso}
Let $a\colon g\otimes x\to h\otimes x$ be a morphism  in a $\Z$-linear symmetric $\otimes$-category,
where $g$ and $h$ are two $\otimes$-invertible objects. 
If $a$ is both $\otimes$-nilpotent and an isomorphism, then $x$ is $\otimes$-nilpotent.
\end{lemma}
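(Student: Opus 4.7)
The plan is simple and almost formal once one unpacks the meanings of $\otimes$-nilpotent morphism (some power $a^{\otimes n}$ is the zero morphism) and $\otimes$-nilpotent object (some power $x^{\otimes m}$ is a zero object, equivalently $\id_{x^{\otimes m}}=0$).

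First I would observe that because $a$ is an isomorphism, every tensor power $a^{\otimes n}\colon g^{\otimes n}\otimes x^{\otimes n}\to h^{\otimes n}\otimes x^{\otimes n}$ (where I am implicitly reshuffling factors via the symmetry~$\gamma$ and associators) is also an isomorphism. By hypothesis $a^{\otimes n}=0$ for some $n\geq 1$, so the zero morphism between these two tensor products is an isomorphism. Composing with its inverse on either side therefore gives
\[
\id_{g^{\otimes n}\otimes x^{\otimes n}} \;=\; (a^{\otimes n})^{-1}\circ a^{\otimes n} \;=\; (a^{\otimes n})^{-1}\circ 0 \;=\; 0,
\]
so $g^{\otimes n}\otimes x^{\otimes n}$ is a zero object.

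Next I would use the invertibility of $g^{\otimes n}$. Tensoring the identity $\id_{g^{\otimes n}\otimes x^{\otimes n}}=0$ with $(g^{\otimes n})^\vee$ and conjugating by the unit/counit isomorphisms $\eta$ and~$\varepsilon$ of the duality (which exist since $g$ and hence $g^{\otimes n}$ is invertible, not merely dualizable), one obtains $\id_{x^{\otimes n}}=0$, i.e.\ $x^{\otimes n}$ is itself a zero object. Hence $x$ is $\otimes$-nilpotent, as required.

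There is no real obstacle here: the only thing to double-check is that the natural identification $(g\otimes x)^{\otimes n}\cong g^{\otimes n}\otimes x^{\otimes n}$ provided by coherence (and used implicitly in the first step) carries $a^{\otimes n}$ to a morphism of the asserted form, so that the invertibility of $a^{\otimes n}$ really does force the invertibility of the zero morphism between the two displayed objects. This is a routine application of Mac Lane's coherence theorem together with the bifunctoriality of~$\otimes$, and the rest of the argument is formal.
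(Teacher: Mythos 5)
Your argument is correct and coincides with the paper's own proof: both deduce from $a^{\otimes n}=0$ together with the invertibility of $a^{\otimes n}$ that $g^{\otimes n}\otimes x^{\otimes n}\cong 0$, and then cancel the invertible factor $g^{\otimes n}$ to conclude $x^{\otimes n}\cong 0$. The extra remark about reshuffling via the symmetry is a fair expansion of a step the paper leaves implicit.
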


\begin{proof}
Suppose that $a^{\otimes n} = 0$. Then since for any $i\geq 1$ the map $a^{\otimes i}$ is an isomorphism we deduce that the object $g^{\otimes n} \otimes x^{\otimes n}$ is isomorphic to zero. Since $g$ is invertible it follows that $x^{\otimes n} \cong 0$ as claimed.
\end{proof}

\begin{lemma}
\label{lemma:nilp_sum}
Let $a,b\colon x\to y$ be two parallel maps in a $\Z$-linear symmetric $\otimes$-category.
If both $a$ and $b$ are $\otimes$-nilpotent, then so is $a+b$.
\end{lemma}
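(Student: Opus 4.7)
The plan is to proceed by a straightforward binomial-style expansion combined with the symmetry isomorphisms of the tensor product. Suppose $a^{\otimes n}=0$ and $b^{\otimes m}=0$, and set $k:=n+m-1$. Using the $\Z$-bilinearity of $\otimes$ (which follows from the bifunctoriality of $\otimes$ on morphisms together with the $\Z$-linear structure), I would expand
\[
(a+b)^{\otimes k} \;=\; \sum_{\epsilon \in \{0,1\}^k} c_\epsilon \;\in\; \mathcal{C}(x^{\otimes k},\,y^{\otimes k}),
\]
where, for each string $\epsilon=(\epsilon_1,\ldots,\epsilon_k)$, $c_\epsilon := c_{\epsilon_1}\otimes\cdots\otimes c_{\epsilon_k}$ with $c_0:=a$ and $c_1:=b$. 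Thus it suffices to prove that each summand $c_\epsilon$ vanishes.

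Fix a string $\epsilon$ and let $i$ be the number of $0$'s and $j=k-i$ the number of $1$'s in $\epsilon$. By pigeonhole, since $i+j = n+m-1$, we must have either $i\geq n$ or $j\geq m$. On the other hand, by the naturality of the symmetry isomorphisms (and associativity), one has a commutative square showing that $c_\epsilon$ is conjugate, via canonical symmetry isomorphisms built from $\gamma$ and $\alpha$, to the ``sorted'' morphism $a^{\otimes i}\otimes b^{\otimes j}\colon x^{\otimes i}\otimes x^{\otimes j}\to y^{\otimes i}\otimes y^{\otimes j}$. In either of the two cases above, functoriality of the tensor product together with the fact that $f\otimes 0=0$ and $0\otimes f=0$ in any $\Z$-linear tensor category (which itself follows at once from bilinearity) yields $a^{\otimes i}\otimes b^{\otimes j}=0$, hence also $c_\epsilon=0$.

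Summing over all $\epsilon$ gives $(a+b)^{\otimes k}=0$, completing the argument. The only mildly delicate point — which I expect to be the main obstacle — is justifying the passage from $c_\epsilon$ to the sorted form $a^{\otimes i}\otimes b^{\otimes j}$. But this is purely coherence-theoretic: for any permutation $\sigma\in \mathfrak{S}_k$ one has the induced symmetry isomorphism $\sigma_\ast\colon z_1\otimes\cdots\otimes z_k \stackrel{\sim}{\to} z_{\sigma(1)}\otimes\cdots\otimes z_{\sigma(k)}$, and by the naturality of $\sigma_\ast$ (equivalently, by Mac Lane's coherence theorem) one has the equality $(f_{\sigma(1)}\otimes\cdots\otimes f_{\sigma(k)})\circ \sigma_\ast = \sigma_\ast\circ(f_1\otimes\cdots\otimes f_k)$, so applying this to the permutation sorting the $0$'s before the $1$'s gives the required conjugation, and invertibility of $\sigma_\ast$ finishes the proof.
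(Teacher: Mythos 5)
Your proposal is correct and follows essentially the same route as the paper: expand $(a+b)^{\otimes k}$ by $\Z$-bilinearity, sort each summand into the form $a^{\otimes i}\otimes b^{\otimes j}$ via the symmetry and associativity isomorphisms, and observe by a pigeonhole count that each summand vanishes. The paper is terser (it does not spell out the explicit bound $k=n+m-1$ or the coherence step), but the underlying argument is the same.
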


\begin{proof}
Since the $\otimes$-product is $\Z$-linear in each variable we can write $(a+b)^{\otimes n}$ as a sum of morphisms of the form
\begin{displaymath}
u_i (a^{\otimes n-i} \otimes b^{\otimes i}) v_i
\end{displaymath}
where $u_i$ and $v_i$ are some composites (depending on~$i\in \{0,\ldots,n\}$) of instances of $\alpha$, $\gamma$ and identity arrows tensored with each other. Since both $a$ and $b$ are $\otimes$-nilpotent we see that $(a+b)^{\otimes n}$ is zero for $n$ chosen sufficiently large i.e., $a+b$ is $\otimes$-nilpotent as claimed.
\end{proof}

\begin{proof}[Proof of Theorem \ref{thm:local_Rtot}]
By Corollary \ref{cor:composite_iso}, if we can show that the sum of any two non-invertible (parallel) maps is again non-invertible, then the collection of non-invertible maps in $\mathcal R$ is a homogeneous ideal which will necessarily be the unique maximal one, and we would be done. 

Thus assume that $r+s: g \to h$ is invertible; we must prove that either $r$ or $s$ is also invertible. 
To this end, consider the following morphism of $\mathcal T$:
\[
t := (r+s)\otimes \id \otimes \id \; \colon \;
g \otimes \cone(r) \otimes \cone (s) \longrightarrow
h \otimes \cone(r) \otimes \cone (s)
\;.
\]
Since $r+s$ is invertible, so is~$t$. We claim that $t$ is also $\otimes$-nilpotent. 
By Lemma \ref{lemma:nilp_sum} it suffices to show that both $r \otimes \id_{\cone(r)} \otimes \id_{\cone(s)}$ and $s \otimes \id_{\cone(r)} \otimes \id_{\cone(s)}$ are $\otimes$-nilpotent, and clearly it suffices to show that $r \otimes \id_{\cone(r)}$ and $s \otimes \id_{\cone(s)}$ are $\otimes$-nilpotent.
Since $g$ and $h$ are invertible objects of~$\mathcal T$, this follows immediately from \cite{balmer:spec3}*{Proposition 2.13}.
Thus $t$ is both $\otimes$-nilpotent and invertible, and by Lemma \ref{lemma:nilp_iso} we must have that $\cone(r)\otimes \cone(s)$ is tensor nilpotent and hence zero, because -- as a $\otimes$-product of cones of maps between invertible objects -- it is dualizable.
But $\mathcal T$ is local by assumption, so that $\cone(r)\otimes \cone(s)\cong 0$ implies that either $\cone(r)$ or $\cone(s)$ is $\otimes$-nilpotent. As we have already noted these cones are dualizable so either $\cone(r)\simeq 0$ or $\cone(s)\simeq 0$.
We conclude that either $r$ or $s$ is already an isomorphism.
\end{proof}

\subsection{Central localization}

The following theorem is a generalization of Balmer's procedure of central localization (see \cite{balmer:spec3}*{\S3}).

\begin{thm}
\label{thm:central_loc}
Let $\mathcal R$ be a central 2-ring of a $\otimes$-triangulated category~$\mathcal T$, and let $S$ be a homogeneous multiplicative system in~$\mathcal R$. 
Then localization induces a canonical isomorphism
\[
\xymatrix{
\mathcal T \ar[r]^-q \ar[d]_\loc & \mathcal T/\mathcal J_S \\
S_\mathcal T^{-1} \mathcal T \ar[ur]_\cong &
}
\]
between $\mathcal T$ localized at $S$ as an $\mathcal R$-algebra (see Theorem \ref{thm:fractions_general}) and the Verdier quotient of~$\mathcal T$ by the thick $\otimes$-ideal $\mathcal J_S:=\langle \cone(s)\mid s\in S \rangle_\otimes$ generated by the cones of maps in~$S$.
Moreover, the central 2-ring of these categories on the objects of $\mathcal R$ is canonically isomorphic to the localized graded commutative 2-ring $S^{-1}\mathcal R$.
\end{thm}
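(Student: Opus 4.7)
The plan is to construct a canonical $\otimes$-functor $\bar q\colon S_\mathcal T^{-1}\mathcal T\to \mathcal T/\mathcal J_S$ via the universal property of localization, show it is an isomorphism, and then derive the second statement from Proposition \ref{prop:comparison_mult_sys}. First I verify that the Verdier quotient $q\colon \mathcal T\to \mathcal T/\mathcal J_S$ inverts every morphism in $S_\mathcal T$. For the generators $s\colon g\to h$ in $S$ this is immediate since $\cone(s)\in \mathcal J_S$ is killed by $q$; for twists $x\otimes s$ the same holds because $\mathcal J_S$ is a tensor ideal, so $\cone(x\otimes s)\cong x\otimes \cone(s)\in \mathcal J_S$. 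Closure of the class of $q$-inverted maps under composition and the presence of all isomorphisms then cover the entirety of $S_\mathcal T$. By the universal property of $\loc$ supplied by Theorem \ref{thm:fractions_general}, one obtains the desired factorization $q=\bar q\circ\loc$, with $\bar q$ the identity on objects.

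To show $\bar q$ is fully faithful I would compare the two calculi of fractions directly. Morphisms in $S_\mathcal T^{-1}\mathcal T$ are equivalence classes of left fractions with denominators in $S_\mathcal T$, while morphisms in $\mathcal T/\mathcal J_S$ are the same with denominators in the larger class $S':=\{t\mid \cone(t)\in\mathcal J_S\}$ of $\mathcal J_S$-quasi-isomorphisms; since $S_\mathcal T\subseteq S'$ (as established above), the functor $\bar q$ identifies with the natural comparison of these Hom-set colimits. The statement then reduces to a cofinality of $S_\mathcal T$ inside $S'$: every $S'$-fraction equals an $S_\mathcal T$-fraction after passing through the $S'$-amplification relation, and any $S'$-amplification between $S_\mathcal T$-fractions can be refined to an $S_\mathcal T$-amplification. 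Both assertions reduce, by induction on the thick tensor-ideal generation of $\mathcal J_S$, to the fundamental case of a cone $\cone(s)$ for $s\in S$. Here the Balmer tensor nilpotence from \cite{balmer:spec3}*{Prop.~2.13} (already exploited in the proof of Theorem \ref{thm:local_Rtot}) guarantees that a suitable tensor power of $s$ annihilates $\cone(s)$, which is exactly what is needed to convert a $\cone(s)$-indexed amplification into a twist-of-$s$ amplification living inside $S_\mathcal T$.

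For the second statement, the inclusion $\mathcal R\hookrightarrow \mathcal T$ makes $\mathcal T$ into an $\mathcal R$-algebra, and the multiplicative system it generates in $\mathcal T$ is precisely $S_\mathcal T$. Proposition \ref{prop:comparison_mult_sys} then supplies a fully faithful $\otimes$-functor $S^{-1}\mathcal R\to S_\mathcal T^{-1}\mathcal T$. By Lemma \ref{lemma:S_AvsA}, every morphism of $S_\mathcal T$ whose source and target both lie in $\mathcal R$ is already in $S$, so the essential image of this comparison is exactly the central 2-ring on the objects of $\mathcal R$ inside $S_\mathcal T^{-1}\mathcal T$. Composing with the isomorphism $\bar q$ from the first part transports the identification to $\mathcal T/\mathcal J_S$.

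The main technical obstacle is the cofinality argument at the heart of the fully faithfulness of $\bar q$: the class $S'$ of $\mathcal J_S$-quasi-isomorphisms is a priori much larger than the explicitly generated system $S_\mathcal T$, and converting a general $S'$-amplification into an $S_\mathcal T$-amplification requires a careful inductive unravelling of the generating procedure for $\mathcal J_S$ together with the tensor nilpotence of cones of maps in $S$. Every other ingredient is routine given the infrastructure developed earlier in the paper.
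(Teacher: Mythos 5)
Your overall architecture matches the paper's: build $\bar q$ from the universal property of the localization functor, reduce the theorem to fully faithfulness of $\bar q$, and handle the second assertion via Proposition~\ref{prop:comparison_mult_sys} applied to the fully faithful inclusion $\mathcal R\hookrightarrow\mathcal T$. That part is fine. The gap is in your fully faithfulness argument.

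You propose to prove cofinality of $S_{\mathcal T}$ inside the class $S' = \{t\mid\cone(t)\in\mathcal J_S\}$ by ``induction on the thick tensor-ideal generation of $\mathcal J_S$''. This is exactly the step the paper goes out of its way to avoid, and with good reason: thick tensor-ideal generation is a closure under cones, retracts and tensoring, and there is no obvious well-founded induction on how an object of $\mathcal J_S$ was built. The paper instead establishes a \emph{closed-form} characterization first: Proposition~\ref{prop:J_S} shows that $\mathcal J_S = \{x\in\mathcal T\mid\exists\,s\in S\text{ with }s\otimes\id_x=0\}$, and Corollary~\ref{cor:cone_in_J} upgrades this to a statement about morphisms: $\cone(a)\in\mathcal J_S$ if and only if, after a single twist by an $s\in S$, the map $a$ admits a one-sided ``inverse up to $s$''. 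With that in hand, converting a given $S'$-fraction into an $S_{\mathcal T}$-fraction (for fullness) and refining an $S'$-amplification between $S_{\mathcal T}$-fractions into an $S_{\mathcal T}$-amplification (for faithfulness) are each a single explicit diagram chase, not an induction. The tensor nilpotence of \cite{balmer:spec3}*{Prop.~2.13} that you correctly flag is indeed the crucial ingredient, but it enters once, in the proof of Proposition~\ref{prop:J_S}, rather than at each stage of an inductive unravelling. Without something like Proposition~\ref{prop:J_S} and Corollary~\ref{cor:cone_in_J}, the ``careful inductive unravelling'' you appeal to does not obviously terminate and is not actually supplied, so as written the fully faithfulness step is a genuine gap.

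A minor remark on the last paragraph: Proposition~\ref{prop:comparison_mult_sys} applied to the fully faithful $F\colon\mathcal R\hookrightarrow\mathcal T$ already gives a fully faithful $\overline F\colon S^{-1}\mathcal R\to S_{\mathcal T}^{-1}\mathcal T$ that is the identity on objects of $\mathcal R$, so the identification of the central 2-ring on those objects is immediate; the additional invocation of Lemma~\ref{lemma:S_AvsA} is not wrong, but it is redundant (it is already internal to the proof of Proposition~\ref{prop:comparison_mult_sys}).
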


We see in particular that $S^{-1}_\mathcal T\mathcal T$ inherits a canonical $\otimes$-triangulated structure.
In order to prove the theorem we will need a couple of preliminary results.

\begin{prop}
\label{prop:J_S}
$\mathcal J_S=\{x\in \mathcal T\mid \exists s\in S \textrm{ such that } s\otimes \id_x=0 \}$.
\end{prop}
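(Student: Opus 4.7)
Denote the right-hand side by $\mathcal K := \{x\in \mathcal T \mid \exists\, s\in S \text{ with } s\otimes \id_x =0\}$. My plan is to verify that $\mathcal K$ is a thick tensor ideal of~$\mathcal T$, and then to establish the two inclusions $\mathcal K\subseteq \mathcal J_S$ and $\mathcal J_S\subseteq \mathcal K$ separately.

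The easy closure properties of $\mathcal K$ (direct summands, suspensions, and tensor products with arbitrary objects of $\mathcal T$) are immediate from the definition, so the nontrivial point is closure under extensions. Given a triangle $x\to y\to z\to \Sigma x$ with $s_1\otimes x=0$ and $s_2\otimes z=0$ for morphisms $s_i\colon g_i\to h_i$ in~$S$, I claim that the tensor product $s_1\otimes s_2$ annihilates~$y$. Note first that $s_1\otimes s_2 = (s_1\otimes h_2)\circ (g_1\otimes s_2)$ is a composite of translates of $s_1$ and~$s_2$, so it lies in~$S$ by the closure properties of homogeneous multiplicative systems. The diagram chase itself is routine: the vanishing $s_2\otimes z=0$ forces $s_2\otimes y$ to factor through $h_2\otimes x\to h_2\otimes y$ in the triangle $h_2\otimes x\to h_2\otimes y\to h_2\otimes z$; then tensoring on the left with $s_1$ and using bifunctoriality together with the symmetry $s_1\otimes h_2\otimes x\cong h_2\otimes (s_1\otimes x) = 0$ yields $(s_1\otimes s_2)\otimes y=0$.

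The inclusion $\mathcal K\subseteq \mathcal J_S$ is a simple splitting argument. If $s\otimes \id_x=0$ with $s\colon g\to h$ in~$S$, then the tensored triangle
\[
g\otimes x \xrightarrow{\;0\;} h\otimes x \to \cone(s)\otimes x \to \Sigma g\otimes x
\]
splits, exhibiting $h\otimes x$ as a direct summand of $\cone(s)\otimes x\in \mathcal J_S$. Since $\mathcal J_S$ is a thick tensor ideal and~$h$ is invertible, $x\cong h^\vee\otimes h\otimes x$ also belongs to $\mathcal J_S$.

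For the reverse inclusion $\mathcal J_S\subseteq \mathcal K$, I use that $\mathcal K$ is already a thick tensor ideal to reduce the problem to showing that each generator $\cone(s)$ (for $s\in S$) lies in~$\mathcal K$. Taking the witnessing morphism to be $s$ itself, this reduces to the key identity
\[
s\otimes \id_{\cone(s)} = 0 \qquad \text{in } \mathcal T,
\]
and this is where I expect the main obstacle to lie. The invertibility of the source and target of~$s$ is essential here --- for general morphisms the identity fails, as one sees even in the derived category of a ring by taking a non-split injection. The identity is a strengthening of the tensor-nilpotence of $s\otimes \cone(s)$ provided by \cite{balmer:spec3}*{Proposition~2.13}, and one can prove it either by sharpening Balmer's nilpotence argument using duality (recall that $\cone(s)$ is rigid because~$g,h$ are invertible, so one has an evaluation $\cone(s)^{\vee}\otimes \cone(s)\to \unit$ available), or more directly by tensoring the defining triangle $g\xrightarrow{s} h\xrightarrow{p} \cone(s)\to\Sigma g$ with $\cone(s)$ and exploiting the splitting induced by the relation $p\circ s=0$ together with the invertibility of~$g$ and~$h$ --- the argument is modelled on the standard null-homotopy showing that ``multiplication by~$r$'' on $R/r$ vanishes in~$\D(R)$.
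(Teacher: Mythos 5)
The reduction at the end of your argument is correct in spirit but rests on a false identity. You claim that for a morphism $s\colon g\to h$ in $\mathcal R$ (so between $\otimes$-invertible objects of $\mathcal T$) one has $s\otimes\id_{\cone(s)}=0$. This is false in general: take $\mathcal T$ to be the category of finite spectra with smash product, $g=h=S^0=\unit$, and $s=2$. Then $\cone(s)$ is the mod-$2$ Moore spectrum $M(\Z/2)$, and it is a classical fact that $2\cdot\id_{M(\Z/2)}\neq 0$ (indeed $[M(\Z/2),M(\Z/2)]\cong\Z/4$, with $2\cdot\id$ being $i\eta q$). Your appeal to the ``standard null-homotopy in $\D(R)$'' is exactly what does not transport to an abstract tensor triangulated category: the honest null-homotopy of multiplication by $r$ on the Koszul complex $K(r)$ exists because one has a genuine chain-level model, but nothing of the sort is available in $\mathcal T$.

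What Balmer's Proposition 2.13 actually gives --- and what the paper uses --- is only the $\otimes$-nilpotence of $s\otimes\id_{\cone(s)}$, i.e.\ that $s^{\otimes n}\otimes\id_{\cone(s)}=0$ for $n\gg0$. The paper bridges the gap by observing that since $S$ is a homogeneous multiplicative system we have $S\otimes S\subseteq S$, so the right-hand side $\mathcal K$ may be rewritten as $\{x\mid\exists\,s\in S,\ n\geq1,\ s^{\otimes n}\otimes\id_x=0\}$; in this reformulation the nilpotence result is precisely what is needed to place $\cone(s)$ in $\mathcal K$. This is the step you are missing. Your closure-under-extensions argument and your splitting argument for $\mathcal K\subseteq\mathcal J_S$ are both fine (the latter is essentially Balmer's Proposition 2.14, which the paper cites); it is only the final ``key identity'' that must be replaced by the nilpotence statement plus the observation that $S$ absorbs tensor powers.
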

\begin{proof}
The argument is almost precisely as in \cite{balmer:spec3}*{Proposition 3.7}; we briefly recall it as it is easier and slightly more natural in this context.
Write $\mathcal J'$ for the category on the right hand side.
We have $\mathcal J'\subseteq \mathcal J_\mathcal S$ by \cite{balmer:spec3}*{Proposition 2.14}.
For the other inclusion, note that $S\otimes S\subseteq S$ implies that $\mathcal J'$ is equal to
$\{x\in \mathcal T\mid \exists s\in S \textrm{ and } n\geq 1 \textrm{ such that } s^{\otimes n}\otimes \id_x =0 \}$.
The latter is easily seen to be a thick $\otimes$-ideal of $\mathcal T$, where closure under taking cones is a consequence of \cite{balmer:spec3}*{Lemma 2.11}. 
By \cite{balmer:spec3}*{Proposition 2.13} $\mathcal J'$ contains $\cone(s)$ for all $s\in S$. 
Hence we conclude the other inclusion $\smash{\mathcal J_S = \langle \cone(s) \mid s\in S \rangle_\otimes \subseteq \mathcal J' }$ as well.
\end{proof}

\begin{cor}
\label{cor:cone_in_J}
Let $a\colon x\to y$ be any map of $\mathcal T$. Then $\cone(a)$ belongs to $ \mathcal J_S$ if and only if there exist a map $s\in S$ and two maps $b$ and~$c$ as in the following square
\[
\xymatrix{
g\otimes x \ar[r]^-{\id_g \otimes a} \ar[d]_{s\otimes \id_x}
 & g\otimes y \ar[d]^{s\otimes \id_y}  \ar@<-.5ex>[dl]_b \ar@<.5ex>[dl]^c \\
h\otimes x \ar[r]_-{\id_h\otimes a} & h\otimes y
}
\]
such that $b(\id_g\otimes a) = s\otimes \id_x$ and $(\id_h\otimes a)c=s\otimes \id_y$.
\end{cor}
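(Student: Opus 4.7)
The plan is to deduce the corollary from Proposition \ref{prop:J_S}, which says that $\cone(a) \in \mathcal J_S$ iff there is some $s' \in S$ with $s' \otimes \id_{\cone(a)} = 0$. The fundamental tool is the morphism of distinguished triangles obtained by tensoring the exact triangle $x \xrightarrow{a} y \xrightarrow{p} \cone(a) \xrightarrow{q} \Sigma x$ with a morphism $s\colon g \to h$ in~$S$:
\[
\xymatrix@C=20pt{
g \otimes x \ar[r]^-{g \otimes a} \ar[d]_{s \otimes x} & g \otimes y \ar[r]^-{g \otimes p} \ar[d]^{s \otimes y} & g \otimes \cone(a) \ar[r]^-{g \otimes q} \ar[d]^{s \otimes \cone(a)} & \Sigma(g \otimes x) \ar[d]^{s \otimes \Sigma x} \\
h \otimes x \ar[r]_-{h \otimes a} & h \otimes y \ar[r]_-{h \otimes p} & h \otimes \cone(a) \ar[r]_-{h \otimes q} & \Sigma(h \otimes x)
}
\]
Both rows are distinguished since $g\otimes-$ and $h\otimes-$ are exact.

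For the $(\Rightarrow)$ direction, assume $s \otimes \cone(a) = 0$ for such an $s$. The middle naturality square gives $(h \otimes p)(s \otimes y) = (s \otimes \cone(a))(g \otimes p) = 0$; exactness of the bottom triangle then produces $c \colon g \otimes y \to h \otimes x$ with $(h \otimes a)c = s \otimes y$. Dually, $(s \otimes \Sigma x)(g \otimes q) = (h \otimes q)(s \otimes \cone(a)) = 0$, and applying exactness to the rotated top triangle shows that $s \otimes x$ factors through $g \otimes a$, providing $b$ with $b(g \otimes a) = s \otimes x$.

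For the $(\Leftarrow)$ direction, given $s,b,c$ as in the statement, we shall exhibit $s \otimes s$ as the required $s' \in S$: note that $s \otimes s = (h \otimes s)(s \otimes g)$ is a composite of twists of $s$ and hence lies in~$S$. Running the previous direction in reverse, the $b$-identity together with $\Sigma(g \otimes a)(g \otimes q) = 0$ yields $(h \otimes q)(s \otimes \cone(a)) = 0$, so by exactness $s \otimes \cone(a) = (h \otimes p) \circ e$ for some $e \colon g \otimes \cone(a) \to h \otimes y$. Expanding
\[
s \otimes s \otimes \cone(a) \;=\; (s \otimes h \otimes \cone(a)) \circ \bigl(g \otimes (s \otimes \cone(a))\bigr) \;=\; (h \otimes h \otimes p) \circ (s \otimes h \otimes y) \circ (g \otimes e)
\]
by the interchange law and substituting the factorization of $s \otimes \cone(a)$, one uses the symmetry $\gamma$ (applied via its naturality to $s$) to rewrite $s \otimes h \otimes y$ as a conjugate of $h \otimes s \otimes y$. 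The $c$-identity then turns the inner piece into $h \otimes s \otimes y = (h \otimes h \otimes a) \circ (h \otimes c)$. Since $(h \otimes h \otimes p)\circ(h \otimes h \otimes a) = h \otimes h \otimes (pa) = 0$, the whole composite vanishes, giving $s \otimes s \otimes \cone(a) = 0$ and hence $\cone(a) \in \mathcal J_S$.

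The main obstacle is the need to pass from $s$ to $s \otimes s$: neither the $b$-identity nor the $c$-identity alone forces $s \otimes \cone(a) = 0$ (each only produces a factorization of this map through one of the two triangle morphisms $h \otimes p$ or $g \otimes q$), and the only way I see to combine them into a single vanishing computation is to tensor~$s$ with itself, which requires careful tracking of the symmetry isomorphisms $\gamma$ when reordering tensor factors.
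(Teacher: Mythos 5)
Your proof is correct and follows essentially the same route as Balmer's Lemma~3.8 in \cite{balmer:spec3}, which is the argument the paper explicitly defers to. The decisive point you identify — that the $b$-identity and $c$-identity each give only a one-sided factorization of $s\otimes\id_{\cone(a)}$ through $h\otimes p$ or through $g\otimes q$ and thus neither forces vanishing on its own, whereas $s\otimes s$ (a composite of twists of $s$, hence still in $S$) lets both identities be used at once to kill $(s\otimes s)\otimes\id_{\cone(a)}$ via $(h\otimes h\otimes p)(h\otimes h\otimes a)=0$ — is precisely the $s^2$-trick from Balmer's proof, adapted to the 2-ring setting where the symmetry $\gamma$ must be carried along. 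Your bookkeeping of $\gamma$ is slightly informal, but each occurrence can be moved past $h\otimes h\otimes p$ and $h\otimes h\otimes a$ by naturality, so the vanishing goes through as you indicate.
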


\begin{proof}
This proof is the same as \cite{balmer:spec3}*{Lemma 3.8}, using Proposition \ref{prop:J_S} instead of \cite{balmer:spec3}*{Proposition 3.7}.
\end{proof}

\begin{proof}[Proof of Theorem \ref{thm:central_loc}]
The last assertion in the theorem follows immediately from Proposition \ref{prop:comparison_mult_sys}, with $F$ the fully faithful inclusion $\mathcal R\hookrightarrow \mathcal T$. 
To see the isomorphism of categories, note that for each $s\in S$ we have $\cone (s)\in \mathcal J_S$ by definition, hence the universal property of $\loc\colon \mathcal T\to S^{-1}_\mathcal T\mathcal T$ induces a unique functor $\tilde q\colon S^{-1}_\mathcal T\mathcal T\to \mathcal T/\mathcal J_S$ which is the identity on objects. 
We must show that $\tilde q$ is full and faithful.
Let 
\[
\xymatrix{
x \ar[r]^-a & z & \ar[l]_-t^-\sim y
}
\]
be a fraction in $\mathcal T$ representing a morphism $x\to y$ in~$\mathcal T/\mathcal J_S$. 
Thus $\cone(t)\in \mathcal J_S$ by construction, and by Corollary \ref{cor:cone_in_J} there exist a map $s\colon g\to h$ in~$S$ and some map $b\colon g\otimes z \to h \otimes y$ with $b(g \otimes t)= s\otimes y$ (we will not need the second map~$c$).
Build the following commutative diagram in~$\mathcal T$.
\[
\xymatrix{
x \ar[r]^-a \ar@{..>}@/_6ex/[ddrr]_{a'\;:= } &
 z &
  y \ar[l]_-t^-\sim \ar@{..>}@/^8ex/[dd]^{=:\; t'} \\
&
 g^\vee g z \ar[u]^\simeq \ar[dr]_{g^\vee b}
  & g^\vee g y \ar[u]_\simeq \ar[l]_-{g^\vee g t} \ar[d]^{g^\vee s y} \\
&& g^\vee hy
}
\]
Defining $a'$ and $t'$ as pictured, we see that $t^{-1}a=t'^{-1}a'$ in $\mathcal T/\mathcal J_S$.
But $t'\in S_\mathcal T$, and therefore $t'^{-1}a'$ lies in the image of~$\tilde q$. 
This shows that $\tilde q$ is full.
To prove that it is faithful, consider a fraction
\[
\varphi \;\colon \;
\xymatrix{
x \ar[r]^-a & w & \ar[l]_-{s'}^-\sim y
}
\]
representing a morphism $\varphi\colon x\to y$ in $S^{-1}_\mathcal T\mathcal T$ (thus $s'\in S_\mathcal T$), and assume that $\tilde q(s'^{-1}a)=0$. This means that there exists a commutative diagram
\begin{equation*}
\label{eq:other_zeros}
\xymatrix@R=5pt{
& w \ar[dd] & \\
x \ar[ur]^a \ar[dr]_0 && y \ar[ul]_{s'} \ar[dl]^t \\
& z &
}
\end{equation*}
with $\cone(t)\in \mathcal J_S$. Applying again Corollary \ref{cor:cone_in_J} to~$t$ we obtain $s \colon g\to h$ in $S$ and $b\colon gz\to hy$ such that $b(g\otimes t)=s\otimes y$ (precisely as above), and we may construct a commutative diagram as follows:
\begin{equation}
\label{eq:faithful}
\xymatrix{
& w \ar[d]  \ar@{..>}@/_12ex/[dddr]_{d \; := } & \\
x \ar[ur]^a \ar[r]^-0  &  
 z &
  y \ar[l]_-t \ar[ul]_{s'} \ar@{..>}@/^8ex/[dd]^{=:\; s''} \\
&
 g^\vee g z \ar[u]^\simeq \ar[dr]_{g^\vee b}
  & g^\vee g y \ar[u]^\simeq \ar[l]_-{g^\vee g t} \ar[d]^{g^\vee s y}  \\
&& g^\vee hy 
}
\end{equation}
Note that $s''$, as defined in the diagram, belongs to~$S_\mathcal T$.
Thus setting $d$ as indicated we may deduce from \eqref{eq:faithful} the existence of a commutative diagram
\begin{equation*}
\xymatrix@R=5pt{
& w \ar[dd]_d & \\
x \ar[ur]^a \ar[dr]_0 && y \ar[ul]_{s'} \ar[dl]^{s''} \\
& z &
}
\end{equation*}
in $\mathcal T$, showing that $\varphi=0$. Hence $\tilde q$ is faithful, thus completing the proof of Theorem~\ref{thm:central_loc}.
\end{proof}

\subsection{Generalized comparison maps}

We now extend the definition and the basic properties of Paul Balmer's comparison map~$\rho$ from triangular to Zariski spectra.

As before, let $\mathcal T$ be an essentially small tensor triangulated category.

\begin{thm}
\label{thm:rho}
For every central 2-ring $\mathcal R$ of $\mathcal T$ there is a continuous spectral map 
$\rho^\mathcal R_\mathcal T\colon \Spc \mathcal T\to \Spec \mathcal R$ 
which sends the prime thick $\otimes$-ideal $\mathcal P\subset \mathcal T$ to the prime ideal
\[
\rho^\mathcal R_\mathcal T (\mathcal P):=\{ r \in \Mor \mathcal R \mid \cone(r) \not\in \mathcal P \} \, .
\]
Moreover, the map $\rho^\mathcal R_\mathcal T$ is natural in the following sense: if $F\colon \mathcal T\to \mathcal T'$ is a tensor-exact functor and $\mathcal R'$ is a central 2-ring of $\mathcal T'$ such that $F\mathcal R\subseteq \mathcal R'$, then the square of spectral continuous maps
\[
\xymatrix{
\Spc \mathcal T' \ar[d]_{\rho^{\mathcal R'}_{\mathcal T'}} \ar[r]^-{\Spc F} &
  \Spc \mathcal T \ar[d]^{\rho^{\mathcal R}_{\mathcal T}} \\
\Spec \mathcal R' \ar[r]^-{\Spec F} &
 \Spec \mathcal R
}
\]
is commutative.
\end{thm}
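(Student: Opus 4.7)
The plan is to reduce the primality of $\rho^\mathcal R_\mathcal T(\mathcal P)$ to Theorem~\ref{thm:local_Rtot} via the Verdier quotient $\mathcal T/\mathcal P$, and then to obtain continuity, spectrality and naturality by direct calculations with Balmer supports. Fix a prime ideal $\mathcal P\in \Spc \mathcal T$ and consider the Verdier quotient $q\colon \mathcal T\to \mathcal T/\mathcal P$, which inherits a canonical tensor-triangulated structure. The primality of $\mathcal P$ translates directly into the statement that $\mathcal T/\mathcal P$ is a \emph{local} $\otimes$-triangulated category: for $x,y\in \mathcal T$, the relation $q(x)\otimes q(y)\cong 0$ in $\mathcal T/\mathcal P$ is equivalent to $x\otimes y\in \mathcal P$, which by primality forces $x\in\mathcal P$ or $y\in\mathcal P$, \ie\ $q(x)\cong 0$ or $q(y)\cong 0$; this is Balmer's local criterion.

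Let $\bar{\mathcal R}$ denote the full tensor subcategory of $\mathcal T/\mathcal P$ on the objects $\{q(g)\mid g\in \mathcal R\}$. Since $q$ is tensor-exact and preserves invertibility and duality, $\bar{\mathcal R}$ is a central 2-ring of $\mathcal T/\mathcal P$, and $q$ induces a morphism of graded commutative 2-rings $\bar q\colon \mathcal R\to \bar{\mathcal R}$. By Theorem~\ref{thm:local_Rtot}, $\bar{\mathcal R}$ is local and its unique maximal homogeneous ideal $\mathfrak m$ consists precisely of the non-invertible morphisms. I claim $\bar q^{-1}(\mathfrak m)=\rho^\mathcal R_\mathcal T(\mathcal P)$: for $r\in \Mor \mathcal R$ the morphism $\bar q(r)$ is an isomorphism in $\bar{\mathcal R}$ (equivalently in $\mathcal T/\mathcal P$) if and only if $\cone(r)\cong 0$ in $\mathcal T/\mathcal P$, if and only if $\cone(r)\in \mathcal P$. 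By Remark~\ref{remark:spec_fct}, preimages of prime ideals along morphisms of 2-rings are prime, so $\rho^\mathcal R_\mathcal T(\mathcal P)\in \Spec\mathcal R$ as desired.

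For continuity and spectrality I compute the preimage of a basic quasi-compact open $D_r\subseteq \Spec \mathcal R$:
\[
(\rho^\mathcal R_\mathcal T)^{-1}(D_r)
=\{\mathcal P\mid r\notin \rho^\mathcal R_\mathcal T(\mathcal P)\}
=\{\mathcal P\mid \cone(r)\in \mathcal P\}
= \Spc \mathcal T\smallsetminus \supp(\cone(r)),
\]
which is a quasi-compact open of $\Spc \mathcal T$ by Balmer's support theory. Since the $D_r$'s form a basis of quasi-compact opens of $\Spec\mathcal R$ closed under finite intersections (Proposition~\ref{prop:2spectral}), both continuity and spectrality of $\rho^\mathcal R_\mathcal T$ follow at once.

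Naturality is then a direct calculation. Given $F\colon \mathcal T\to \mathcal T'$ tensor-exact with $F\mathcal R\subseteq \mathcal R'$ and $\mathcal P'\in \Spc\mathcal T'$,
\[
\rho^\mathcal R_\mathcal T((\Spc F)(\mathcal P'))
=\{r\in \Mor\mathcal R\mid \cone(r)\notin F^{-1}\mathcal P'\}
=\{r\mid F(\cone(r))\notin \mathcal P'\},
\]
while
\[
(\Spec F)(\rho^{\mathcal R'}_{\mathcal T'}(\mathcal P'))
= F^{-1}\{r'\in \Mor\mathcal R'\mid \cone(r')\notin \mathcal P'\}
=\{r\mid \cone(Fr)\notin \mathcal P'\},
\]
and the two sets agree because $F$ is exact, so $F(\cone(r))\cong \cone(Fr)$. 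The main obstacle is the first step: recognising that Theorem~\ref{thm:local_Rtot} is exactly the right instrument to convert the primality of $\mathcal P$ in $\mathcal T$ into the primality of $\rho^\mathcal R_\mathcal T(\mathcal P)$ in $\mathcal R$; once this reduction is in place, everything else follows formally from Balmer's support calculus and preimage computations.
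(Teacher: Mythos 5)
Your proof is correct and follows essentially the same route as the paper's: reduce to the quotient $\mathcal T/\mathcal P$, invoke Theorem~\ref{thm:local_Rtot} to identify $\rho^\mathcal R_\mathcal T(\mathcal P)$ as the preimage of the unique maximal (hence prime, by Lemma~\ref{lemma:max_prime}) ideal of the central 2-ring $\bar{\mathcal R}\subseteq\mathcal T/\mathcal P$, and then deduce continuity, spectrality and naturality from the formula $(\rho^\mathcal R_\mathcal T)^{-1}(D_r)=U(\cone r)$. The only cosmetic difference is that you spell out why $\mathcal T/\mathcal P$ is local (in fact proving the stronger domain property) and carry out the naturality computation, both of which the paper leaves implicit.
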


\begin{proof}
Let $\mathcal P\in \Spc \mathcal T$ and denote by $q_\mathcal P\colon \mathcal T\to \mathcal T/\mathcal P$ the Verdier quotient functor. 
The functor $q_\mathcal P$ is strong monoidal so the full subcategory $\mathcal R_\mathcal P:= \{ q_\mathcal P(g) \mid g\in \mathcal R\} \subseteq \mathcal T/\mathcal P$ is a central 2-ring of $\mathcal T/\mathcal P$. 
Since $\mathcal T/\mathcal P$ is a local tensor triangulated category its central 2-ring $\mathcal R_\mathcal P$ has a unique maximal ideal $\mathfrak m_\mathcal P$ consisting the non-invertible maps by Theorem \ref{thm:local_Rtot}.
By thickness of $\mathcal P$ we have $\mathcal P = \Ker (q_\mathcal P)$ and therefore an equality of sets
\[
\rho^\mathcal R_\mathcal T(\mathcal P)= q_\mathcal P^{-1}(\mathfrak m_\mathcal P) \; \subseteq \; \Mor \mathcal R \,.
\]
Thus $\rho^\mathcal R_\mathcal T(\mathcal P)$ is the preimage of the unique maximal ideal of $\mathcal R_\mathcal P$ under the morphism  $q_\mathcal P\colon \mathcal R\to \mathcal R_\mathcal P$ of graded commutative 2-rings, and in particular it is a homogeneous prime. 
This shows that the resulting function $\rho^\mathcal R_\mathcal T \colon \Spc \mathcal T\to \Spec \mathcal R$ is well-defined.
To see that it is a spectral continuous map it suffices to note that, by definition,
\[
(\rho^\mathcal R_\mathcal T)^{-1}(D_r) = U(\cone(r))
\]
for every $r\in \mathcal R$, where $D_r=\{ \mathfrak p \mid r\not\in \mathfrak p \}$ is a quasi-compact basic open for the Zariski topology of $\Spec \mathcal R$, and $U(x)=\{\mathcal P\mid x\in \mathcal P \}$ (for $x\in \mathcal T$) is a quasi-compact basic open for the Zariski topology of $\Spc \mathcal T$. 

The naturality of $\rho^\mathcal R_\mathcal T$ in the pair $(\mathcal T,\mathcal R)$ can be checked immediately from the definitions.
\end{proof}

\subsection{A criterion for injectivity}

As above let $\mathcal{R}$ be a central 2-ring in an essentially small tensor triangulated category~$\mathcal T$, and let $\rho^\mathcal R_\mathcal T$ be the associated continuous map of Theorem~\ref{thm:rho}. 
We have the following topological condition which implies the injectivity of $\rho^\mathcal R_\mathcal T$.

\begin{prop}\label{prop:injectivity}
Suppose the collection of subsets
\begin{displaymath}
\mathcal{B} = \{\supp (\cone(r)) \; \vert \; r \in \Mor \mathcal R\}
\end{displaymath}
gives a basis of closed subsets for the Zariski topology on $\Spc \mathcal{T}$. Then the comparison map $\rho^\mathcal R_\mathcal T$ is injective, and is furthermore a homeomorphism onto its image. 
\end{prop}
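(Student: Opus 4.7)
The plan is to reduce everything to the identification of preimages already implicit in the proof of Theorem~\ref{thm:rho} and then apply elementary point-set topology.

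First I would unpack the definition: by construction of $\rho^\mathcal R_\mathcal T$, a morphism $r\in\Mor\mathcal R$ belongs to $\rho^\mathcal R_\mathcal T(\mathcal P)$ iff $\cone(r)\notin\mathcal P$, i.e.\ iff $\mathcal P\in\supp(\cone(r))$. Equivalently, for every $r\in\Mor\mathcal R$,
\[
(\rho^\mathcal R_\mathcal T)^{-1}\bigl(V(\langle r\rangle)\bigr)=\supp(\cone(r)),
\]
where $V(\langle r\rangle)=\Spec\mathcal R\setminus D_r$ is the basic closed set dual to $D_r$. Thus the hypothesis that $\mathcal B$ is a basis of closed sets for $\Spc\mathcal T$ says precisely that the preimages of basic closed sets of $\Spec\mathcal R$ under $\rho^\mathcal R_\mathcal T$ generate the closed topology on~$\Spc\mathcal T$.

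For injectivity, suppose $\rho^\mathcal R_\mathcal T(\mathcal P)=\rho^\mathcal R_\mathcal T(\mathcal Q)$. Then $\mathcal P$ and $\mathcal Q$ belong to the same sets in $\mathcal B$, and since $\mathcal B$ generates the closed subsets of $\Spc\mathcal T$ they belong to exactly the same closed subsets. Because $\Spc\mathcal T$ is spectral and in particular $T_0$ (see \cite{balmer:prime}), this forces $\mathcal P=\mathcal Q$.

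For the homeomorphism onto its image, continuity of $\rho^\mathcal R_\mathcal T$ was already established in Theorem~\ref{thm:rho}, so it suffices to check that the topology of $\Spc\mathcal T$ coincides with the subspace topology pulled back from $\Spec\mathcal R$. The intersections $V(\langle r\rangle)\cap\rho^\mathcal R_\mathcal T(\Spc\mathcal T)$ form a subbasis of closed subsets for the subspace topology on the image; their preimages under $\rho^\mathcal R_\mathcal T$ are, by the identification above, precisely the elements of $\mathcal B$, which by hypothesis form a basis of closed subsets on $\Spc\mathcal T$. Combined with the injectivity established in the preceding paragraph, this means that $\rho^\mathcal R_\mathcal T$ restricts to a continuous bijection onto its image whose inverse is also continuous, completing the argument. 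The only step that requires any thought is recognising that ``membership in the same basic closed sets'' upgrades to ``membership in the same closed sets'' exactly because $\mathcal B$ is a basis of closed sets; beyond that, no real obstacle appears.
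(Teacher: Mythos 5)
Your proof is correct and follows essentially the same approach as the paper's: both arguments rest on the identification of $(\rho^{\mathcal R}_{\mathcal T})^{-1}$ of basic closed (or open) sets with elements of $\mathcal B$, and both deduce injectivity from the $T_0$ property of $\Spc\mathcal T$ (the paper phrases this by computing $\overline{\{\mathcal P\}}$ explicitly as an intersection over $\mathcal B$, while you argue that $\mathcal P$ and $\mathcal Q$ lie in the same closed sets; these are the same observation). The homeomorphism-onto-image part is also the same, with you using basic closed sets where the paper uses basic open sets.
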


\begin{proof}
Suppose first that $\mathcal{B}$ is a basis of closed subsets. Let $\mathcal{P}, \mathcal{Q} \in \Spc{\mathcal{T}}$ be such that $\rho^\mathcal R_\mathcal T(\mathcal{P}) = \rho^\mathcal R_\mathcal T(\mathcal{Q})$, \ie, 
$\cone(r)\notin \mathcal{P}$ if and only if $\cone(r) \notin \mathcal{Q}$ for every $r\in \Mor \mathcal R$. 
Using our basis $\mathcal{B}$  we see that
\begin{displaymath}
\overline{\{\mathcal{P}\}} = \bigcap_{\substack{r\in \Mor \mathcal R \\ \cone(r)\notin \mathcal{P}}} \supp{(\cone(r))} = \bigcap_{\substack{r\in \Mor \mathcal R \\ \cone(r)\notin \mathcal{Q}}} \supp{(\cone(r))} = \overline{\{\mathcal{Q}\}}
\end{displaymath}
where the middle equality follows from $\rho^\mathcal R_\mathcal T(\mathcal{P}) = \rho^\mathcal R_\mathcal T(\mathcal{Q})$. But then $\mathcal{P} = \mathcal{Q}$ since the space $\Spc{\mathcal{T}}$ is~$T_0$, proving that $\rho^\mathcal{R}_\mathcal{T}$ is injective.

Recall that $\Spec \mathcal{R}$ has a basis of open subsets given by the $D_r$ for $r\in \mathcal{R}$. Now observe that, as was already noted in the proof of Theorem \ref{thm:rho}, we have by definition (and injectivity of $\rho^\mathcal{R}_\mathcal{T}$)
\begin{displaymath}
\rho^\mathcal{R}_\mathcal{T}(\Spc \mathcal{T}) \cap D_r = \rho_{\mathcal T}^\mathcal R\, U(\cone(r)).
\end{displaymath}
By hypothesis the $U(\cone(r))$ are a basis of open subsets for $\Spc \mathcal{T}$ and so we see that $\rho^\mathcal{R}_\mathcal{T}$ is a homeomorphism onto its image.
\end{proof}
\section{Applications}
\label{sec:examples}

\subsection{Graded commutative rings} \label{subsec:grcomm}
Let $G$ be an abelian group and let $R$ be an $\epsilon$-commutative $G$-graded ring. Let us denote by $R$-$\GrMod$ the Grothendieck abelian tensor category of graded (left) $R$-modules with degree zero homomorphisms. As usual $\D(R)$ denotes the unbounded derived category of $R$-$\GrMod$ and $\D^\mathrm{perf}(R)$ denotes the compact objects of $\D(R)$. We recall that the compact objects are precisely those complexes quasi-isomorphic to a bounded complex of finitely generated projective $R$-modules. Also, $\D^\perf(R)$ is a rigid tensor triangulated category for the derived tensor product $\otimes=\otimes^\mathbf L_R$ (this involves signs, see \cite{ivo_greg:graded}) with tensor unit~$R$.

In \cite{ivo_greg:graded} we have given a classification of the thick tensor ideals of $\D^\mathrm{perf}(R)$ in the case that $R$ is noetherian. The aim of this section is to demonstrate how to use the generalised comparison map~$\rho$ we have constructed to remove the noetherian hypothesis from this classification of thick tensor ideals. 
The result and the argument are similar in spirit to the work of Thomason \cite{Thomclass} on perfect complexes on quasi-compact and quasi-separated schemes. However, our approach is rather more formal - the main input from graded commutative algebra occurs almost exclusively in the results of \cite{ivo_greg:graded} and what remains here is an abstract argument belonging essentially to the realm of tensor triangular geometry.

Let us write $R \cong \colim_\Lambda R_\lambda$ where $\Lambda$ is a small filtered category and each $R_\lambda$ is a finitely generated $G$-graded subring of $R$ (i.e.\ consider $R$ as the union of all its finitely generated graded subrings). In particular each $R_\lambda$ is a noetherian $\epsilon$-commutative $G$-graded ring and so our classification theorem (\cite{ivo_greg:graded}*{Theorem 5.1}) applies to give homeomorphisms
\begin{displaymath}
\specgr R_\lambda \stackrel{\sim}{\to} \Spc \D^\mathrm{perf}(R_\lambda).
\end{displaymath}
We will denote by $i_\lambda$ the inclusion $R_\lambda \to R$ and by $\LL i^*_\lambda$ the associated functor $\D^\mathrm{perf}(R_\lambda) \to \D^\mathrm{perf}(R)$.

We let $\mathcal{R}$ be the graded commutative 2-ring given by taking the replete closure of the full subcategory $\{R(g) \; \vert \; g\in G\}$ of $\D^\mathrm{perf}(R)$. Similarly $\mathcal{R}_\lambda$ denotes the replete closure of the full subcategory $\{R_\lambda(g) \; \vert \; g\in G\}$ of $\D^\mathrm{perf}(R_\lambda)$. It is clear that $\LL i^*_\lambda \mathcal{R}_\lambda$ is contained in $\mathcal{R}$ and there are similar containments coming from the left derived functors of extension of scalars along the structure maps in the directed system given by $\Lambda$. 

  We denote by $\rho$ (resp.\ $\rho_\lambda$) the comparison map from $\Spc\D^{\mathrm{perf}}(R)$ to $\Spec \mathcal{R}$ (resp.\ $\Spc\D^{\mathrm{perf}}(R_\lambda)$ to $\Spec \mathcal{R}_\lambda$) of Theorem \ref{thm:rho}. We note that due to the naturality of the comparison maps there are commutative squares as follows for each~$\lambda$.
\begin{displaymath}
\xymatrix{
\Spc \D^\mathrm{perf}(R) \ar[rr]^-{\Spc \LL i^*_\lambda} \ar[d]_\rho && \Spc \D^\mathrm{perf}(R_\lambda) \ar[d]^{\rho_\lambda} \\
\Spec \mathcal{R} \ar[rr]_-{\Spec  i^*_\lambda} && \Spec \mathcal{R}_\lambda
}
\end{displaymath}

Next let us make some observations about compatibilities between the homogeneous spectra of $R$, $\mathcal{R}$, and the companion category $\mathcal{C}_R$ (of course all of these observations are equally valid for the $R_\lambda$ and will be used below, together with the obvious compatibility conditions coming from the directed system of subrings). By construction the companion category $\mathcal{C}_R$  is canonically monoidally equivalent to $\mathcal{R}$ (see \cite{ivo_greg:graded}*{Definition 2.1} for details on the companion category and \emph{loc}.\ \emph{cit}.\ Proposition 2.14 concerning the equivalence being symmetric). 
Thus, since (prime) ideals in a graded commutative 2-ring are closed under composition with isomorphisms, the topological spaces $\Spec \mathcal{C}_R$ and $\Spec \mathcal{R}$ are canonically homeomorphic. 

\begin{lemma}
There is a canonical homeomorphism
\begin{displaymath}
\specgr R \stackrel{\sim}{\to} \Spec \mathcal{C}_R.
\end{displaymath}
\end{lemma}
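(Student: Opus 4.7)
The plan is to exhibit an inclusion-preserving bijection between homogeneous two-sided ideals of $R$ (in the classical graded sense) and homogeneous ideals of the companion 2-ring $\mathcal{C}_R$ (in the sense of Section~\ref{sec:grcomm2rings}), show that this bijection restricts to primes, and then check that it induces a homeomorphism of Zariski topologies.

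First I would associate to a homogeneous ideal $I\subseteq R$ the 2-ring ideal $\mathcal{I}$ of $\mathcal{C}_R$ with components $\mathcal{I}(g,h):=I_{h-g}\subseteq R_{h-g}=\mathcal{C}_R(g,h)$. Conversely, to a homogeneous ideal $\mathcal{I}$ of $\mathcal{C}_R$ I would associate $I:=\bigoplus_{d\in G}\mathcal{I}(0,d)\subseteq R$. That the first assignment gives a well-defined 2-ring ideal hinges on the observation that $\id_\ell\in\mathcal{C}_R(\ell,\ell)$ is $1\in R_0$, so the sign formula in Example~\ref{ex:comm_2_rings}(3) makes each twist $\ell\otimes r$ equal to $\pm r$ as an element of a graded piece of~$R$; hence closure under translation is automatic once closure under composition is, and the latter is exactly two-sidedness of~$I$. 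That the second assignment produces a two-sided ideal will use the fact that products $rs$ in~$R$ are literally compositions of morphisms in $\mathcal{C}_R$: left multiplication by $r\in R_e$ on $s\in\mathcal{I}(0,d)$ is post-composition by the morphism $r\colon d\to d+e$, while right multiplication is pre-composition by the morphism $r\colon -e\to 0$, followed by translation back to $\mathcal{C}_R(0,d+e)$ using closure of $\mathcal{I}$ under tensoring.

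The two constructions will be mutually inverse by routine chasing: running $I\mapsto\mathcal{I}\mapsto I'$ recovers $I$ on the nose from the identifications $I_d=\mathcal{I}(0,d)$, and running $\mathcal{I}\mapsto I\mapsto\mathcal{I}'$ recovers $\mathcal{I}$ by translation-closure, which identifies $\mathcal{I}(g,h)$ with $\mathcal{I}(0,h-g)$ up to signs, hence as subgroups of $R_{h-g}$. Primality transfers in both directions because the factorisation $rs\in I$ of homogeneous elements of~$R$ is precisely a composition relation in~$\mathcal{C}_R$, and properness is preserved because the tensor unit $\id_\unit$ of $\mathcal{C}_R$ corresponds to $1\in R$. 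Inclusion-preservation is tautological.

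Finally the bijection will be a homeomorphism because both spectra carry Zariski topologies with bases of opens of the form $D_r=\{\mathfrak{p}\mid r\not\in\mathfrak{p}\}$ indexed by the same data --- homogeneous elements of~$R$, equivalently morphisms of~$\mathcal{C}_R$ --- and the bijection trivially interchanges these basic opens. The only step with genuine content is the sign-compatibility check, namely that translation in $\mathcal{C}_R$ reduces to a $\pm 1$ ambiguity at the level of graded pieces of~$R$; once this is in hand, the rest of the proof is pure bookkeeping between two equivalent encodings of the same combinatorial data.
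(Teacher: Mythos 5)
Your proof is correct and follows essentially the same route as the paper: both set up the same bijection between homogeneous ideals of $R$ and homogeneous ideals of $\mathcal{C}_R$ (passing elements of $I$ viewed as morphisms out of $0$ in one direction, and collecting $\mathcal{I}(0,g)$ for varying $g$ in the other), restrict to primes, and match up the Zariski bases of opens. The paper merely asserts that the two assignments are inverse to one another without spelling out the verification; you supply the details, in particular the key sign observation that twisting by an object in $\mathcal{C}_R$ alters a morphism only by a factor of $\pm 1$ at the level of graded pieces of $R$, so that closure under translation follows from two-sidedness of the ring ideal.
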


\begin{proof}
Given any homogeneous ideal $I$ of $R$ we can associate to it a unique ideal $\mathcal{I}$ of $\mathcal{C}_R$ by closing the elements of $I$, viewed as morphisms out of the tensor unit $0$ in $\mathcal{C}_R$, under tensoring with objects. On the other hand any ideal $\mathcal{I}$ of $\mathcal{C}_R$ gives an ideal of $R$ consisting of all the morphisms of $\mathcal{C}_R(0,g)$ in $\mathcal{I}$ as $g$ varies over $G$. It is not hard to check that these assignments are inverse to one another.
\end{proof}

Let us also note that $\mathcal{C}_R$ is, essentially by definition, the union of the subcategories $\mathcal{C}_{R_\lambda}$. Combining this with the observations we have already made shows that we may equally well speak of graded rings, companion categories, or the central 2-rings we have defined inside perfect complexes without introducing any ambiguity. We shall switch between these different points of view whenever it is convenient.

Our aim is to prove that the comparison map $\rho$ with target $\Spec \mathcal{R} \cong \specgr R$ is a homeomorphism. The bulk of the proof will be contained in a string of rather simple lemmas and observations. We begin by showing that it is injective by applying the criterion of Proposition \ref{prop:injectivity}.

\begin{notation}
We denote the support on $\D^\mathrm{perf}(R)$, in the sense of Balmer, by $\supp_R$ and the homological (or small) support by $\supph_R$. Strictly speaking $\supph_R$ takes values in $\specgr R$ but we will abuse notation slightly by considering it also to take values in $\Spec \mathcal{R}$ via the canonical identification. We use similar notation for the $R_\lambda$ but note that the distinction for these noetherian rings is not so important due to the following lemma. 
\end{notation}

\begin{lemma}\label{lem:noeth_homeo}
Suppose that $R$ is noetherian. Then, with $\mathcal{R}$ as above, $\rho$ is a support preserving homeomorphism.
\end{lemma}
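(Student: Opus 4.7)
The plan is to combine the noetherian classification theorem from \cite{ivo_greg:graded} with the injectivity criterion of Proposition \ref{prop:injectivity}, and then identify $\rho$ with the inverse of the classification homeomorphism.

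First I would invoke \cite{ivo_greg:graded}*{Theorem 5.1}, which for the noetherian graded commutative ring $R$ furnishes a support-preserving homeomorphism
\[
\sigma_R \colon \specgr R \stackrel{\sim}{\longrightarrow} \Spc \D^\perf(R), \qquad \mathfrak{p} \longmapsto \{X \mid \mathfrak{p} \notin \supph_R(X)\},
\]
that is, one satisfying $\sigma_R\bigl(\supph_R(X)\bigr)=\supp_R(X)$ for every $X\in\D^\perf(R)$. In particular, through $\sigma_R$ the Zariski topology on $\specgr R$ agrees with the Balmer topology on $\Spc \D^\perf(R)$.

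Next I would verify the hypothesis of Proposition \ref{prop:injectivity}, namely that $\{\supp_R(\cone(r))\mid r\in\Mor\mathcal{R}\}$ is a basis of closed subsets for $\Spc\D^\perf(R)$. Transporting along $\sigma_R$, this reduces to showing that the sets $\supph_R(\cone(r))$ form a basis of closed subsets of $\specgr R$. A morphism $r\colon R(a)\to R(b)$ in $\mathcal R$ is, via the equivalence $\mathcal R\simeq\mathcal C_R$ recalled above, given by multiplication with a homogeneous element $\tilde r\in R_{b-a}$, and a routine Koszul-type computation identifies $\supph_R(\cone(r))$ with the vanishing locus $V(\tilde r)$. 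Since the $V(\tilde r)$ for homogeneous $\tilde r$ are a standard basis of closed subsets for the Zariski topology on $\specgr R$, Proposition \ref{prop:injectivity} applies and $\rho$ is injective and a homeomorphism onto its image.

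Finally, I would compute the composite $\rho\circ\sigma_R$ to obtain surjectivity. Given $\mathfrak p\in\specgr R$ and $\mathcal P:=\sigma_R(\mathfrak p)$, a morphism $r\in\mathcal R(0,g)$ corresponding to $\tilde r\in R_g$ satisfies $r\in\rho(\mathcal P)$ iff $\cone(r)\notin\mathcal P$ iff $\mathfrak p\in\supph_R(\cone(r))=V(\tilde r)$ iff $\tilde r\in\mathfrak p$. Under the identification $\Spec\mathcal R\cong\specgr R$ from the lemma preceding this one, this exactly says $\rho(\sigma_R(\mathfrak p))=\mathfrak p$, so $\rho\circ\sigma_R=\id$ and hence $\rho$ is a homeomorphism with inverse $\sigma_R$. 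Support-preservation then follows immediately from $\rho=\sigma_R^{-1}$ and the support-preserving property of $\sigma_R$. The only genuinely non-formal input is the Koszul-type calculation $\supph_R(\cone(r))=V(\tilde r)$ for $\tilde r\in R_{b-a}$; everything else is a direct unwinding of the definitions together with the cited classification theorem.
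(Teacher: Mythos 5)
Your proof is correct and follows essentially the same route as the paper: invoke the noetherian classification theorem of \cite{ivo_greg:graded} for the homeomorphism $\sigma_R$, then compute $\rho\circ\sigma_R=\id$ and conclude $\rho=\sigma_R^{-1}$. The only divergence is that your middle step -- verifying the basis-of-closed-subsets hypothesis of Proposition \ref{prop:injectivity} via the identification $\supph_R(\cone(r))=V(\tilde r)$ -- is logically superfluous here: once you know $\sigma_R$ is a bijection and $\rho\circ\sigma_R=\id$, it already follows formally that $\rho=\sigma_R^{-1}$ is a homeomorphism, with no separate injectivity argument needed. (That criterion is what does the real work in the non-noetherian case, where one cannot produce an explicit inverse; in the noetherian case it costs you nothing to include, but the paper simply skips it.)
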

\begin{proof}
By \cite{ivo_greg:graded}*{Theorem 5.1} the morphism $\sigma\colon \spec \mathcal R\to \Spc \D^\mathrm{perf}(R)$ given by
\begin{displaymath}
\sigma(\mathfrak{p}) = \{E\in \D^\mathrm{perf}(R)\; \vert \; \mathfrak{p}\notin \supph_R E\}
\end{displaymath}
is a homeomorphism identifying $\supp_R$ and $\supph_R$. It is then easily checked that $\rho$ is inverse to $\sigma$. For instance we have, for $\mathfrak{p}\in \Spec \mathcal{R}$
\begin{align*}
\rho\sigma(\mathfrak{p}) &= \rho\{E\in \D^\mathrm{perf}(R)\; \vert \; \mathfrak{p}\notin \supph_R E\} \\
&= \rho\{E\in \D^\mathrm{perf}(R)\; \vert \; \sigma(\mathfrak{p})\notin \supp_R E\}\\
&= \{r\in \mathcal{R} \; \vert \; \sigma(\mathfrak{p})\in \supp_R\cone(r)\} \\
&= \{r\in \mathcal{R} \; \vert \; \mathfrak{p}\in \supph_R\cone(r)\} \\
&= \mathfrak{p} \,.
\end{align*}
\end{proof}

\begin{lemma}\label{lem:supports_agree}
For every object $E$ of $\D^\mathrm{perf}(R)$ there is an equality
\begin{displaymath}
\rho^{-1} \supph_R E = \supp_R E.
\end{displaymath}
\end{lemma}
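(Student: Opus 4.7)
The plan is to leverage the established noetherian case (Lemma \ref{lem:noeth_homeo}) by reducing to a finitely generated subring $R_\lambda \subseteq R$ over which $E$ is already defined, and then transport supports back through the commutative square relating $\rho$ and $\rho_\lambda$.

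First I would observe that since $E$ is a compact object of $\D(R) = \D(\colim_\Lambda R\text{-}\GrMod_\lambda)$... more concretely, since $E$ is a perfect complex and $R = \colim_\Lambda R_\lambda$ is a filtered colimit of finitely generated graded subrings, a standard compactness argument gives us some index $\lambda \in \Lambda$ and a perfect complex $E_\lambda \in \D^{\mathrm{perf}}(R_\lambda)$ with $E \simeq \LL i_\lambda^* E_\lambda$. (A bounded complex of finitely generated projective $R$-modules involves only finitely many matrix entries, which live in some $R_\lambda$, and idempotents splitting finitely generated projectives can also be arranged over a suitable $R_\lambda$.)

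Next I would chain together three naturality statements. By the definition of Balmer support, $\supp_R E = (\Spc \LL i^*_\lambda)^{-1}\, \supp_{R_\lambda} E_\lambda$. By Lemma \ref{lem:noeth_homeo} applied to the noetherian ring $R_\lambda$, we have $\supp_{R_\lambda} E_\lambda = \rho_\lambda^{-1}\, \supph_{R_\lambda} E_\lambda$. Finally, by the commutative square relating $\rho$, $\rho_\lambda$, $\Spc \LL i^*_\lambda$ and $\Spec i^*_\lambda$ recorded just before Lemma \ref{lem:noeth_homeo}, we get
\[
\supp_R E \;=\; (\Spc \LL i^*_\lambda)^{-1} \rho_\lambda^{-1}\, \supph_{R_\lambda} E_\lambda \;=\; \rho^{-1}\, (\Spec i^*_\lambda)^{-1}\, \supph_{R_\lambda} E_\lambda.
\]

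The step I expect to require the most care is the naturality of the \emph{homological} support: the identity $(\Spec i^*_\lambda)^{-1}\, \supph_{R_\lambda} E_\lambda = \supph_R E$. I would establish this via a change-of-rings computation with graded residue fields. For $\mathfrak{p} \in \specgr R$ with $\mathfrak{q} := i_\lambda^{-1}\mathfrak{p} = (\Spec i^*_\lambda)(\mathfrak{p})$, the graded residue field $\kappa_R(\mathfrak{p})$, viewed through $i_\lambda$, becomes a module over the graded residue field $\kappa_{R_\lambda}(\mathfrak{q})$ (since $\mathfrak{q}$ maps into $\mathfrak{p}$ while the complement of $\mathfrak{q}$ maps into units of $\kappa_R(\mathfrak{p})$). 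Modules over a graded field are graded-free and nonzero, so the projection formula
\[
E \otimes^{\LL}_R \kappa_R(\mathfrak{p}) \;\simeq\; E_\lambda \otimes^{\LL}_{R_\lambda} \kappa_R(\mathfrak{p}) \;\simeq\; \bigl(E_\lambda \otimes^{\LL}_{R_\lambda} \kappa_{R_\lambda}(\mathfrak{q})\bigr) \otimes_{\kappa_{R_\lambda}(\mathfrak{q})} \kappa_R(\mathfrak{p})
\]
vanishes if and only if $E_\lambda \otimes^{\LL}_{R_\lambda} \kappa_{R_\lambda}(\mathfrak{q})$ vanishes, that is, if and only if $\mathfrak{q} \notin \supph_{R_\lambda} E_\lambda$. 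This precisely gives $\mathfrak{p} \in \supph_R E \iff \mathfrak{p} \in (\Spec i^*_\lambda)^{-1} \supph_{R_\lambda} E_\lambda$, closing the chain of equalities and yielding $\supp_R E = \rho^{-1}\supph_R E$.
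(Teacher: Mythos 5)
Your proof is correct and matches the paper's argument essentially step for step: reduce $E$ to a complex $E_\lambda$ defined over a finitely generated (hence noetherian) subring $R_\lambda$, invoke Lemma~\ref{lem:noeth_homeo} (equivalently \cite{ivo_greg:graded}*{Theorem~5.1}) there, and transport along the commutative square relating $\rho$, $\rho_\lambda$, $\Spc \LL i^*_\lambda$, and $\Spec i^*_\lambda$ together with the naturality of Balmer support (\cite{balmer:prime}*{Prop.~3.6}). The only difference is that you spell out the naturality of $\supph$ under flat base change via graded residue fields, a step the paper merely labels ``standard,'' and you run the chain of equalities in the opposite order — both harmless.
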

\begin{proof}
Let $E$ be a perfect complex over $R$ as in the statement of the lemma. We may assume, by choosing an isomorphic object if necessary, that $E$ is in fact a bounded complex of finitely generated free $R$-modules. As $E$ is determined by finitely many matrices with coefficients in $R$ we can find some $\lambda$ and a perfect complex $E_\lambda \in \D^{\mathrm{perf}}(R_\lambda)$ so that
\begin{displaymath}
E \cong R \otimes_{R_\lambda} E_\lambda = \LL i^*_\lambda E_\lambda.
\end{displaymath}
Naturality of the comparison map, together with the canonical isomorphisms we have observed, gives a commutative diagram
\begin{displaymath}
\xymatrix{
\Spc \D^\mathrm{perf}(R) \ar@{..>}@/_6ex/[dd]_{\rho'\, :=}
\ar[rr]^-{\Spc \LL i^*_\lambda} \ar[d]_\rho &&
\ar@{..>}@/^6ex/[dd]^{=:\, \rho'_\lambda}
 \Spc \D^\mathrm{perf}(R_\lambda) \ar[d]^{\rho_\lambda} \\
\Spec \mathcal{R} \ar[rr]_-{\Spec i^*_\lambda} \ar[d]_{\simeq} && \Spec \mathcal{R}_\lambda \ar[d]^{\simeq} \\
\specgr R \ar[rr]_{\specgr i_\lambda} && \specgr R_\lambda
}
\end{displaymath}

Thus we deduce equalities
\begin{align*}
(\rho')^{-1}\supph_R(E) &= (\rho')^{-1}(\specgr i_\lambda)^{-1} \supph_{R_\lambda}(E_\lambda) \\
&= (\Spc \LL i^*_\lambda)^{-1}(\rho_\lambda ')^{-1} \supph_{R_\lambda}(E_\lambda) \\
&= (\Spc \LL i^*_\lambda)^{-1} \supp_{R_\lambda}(E_\lambda) \\
&= \supp_R (\LL i^*_\lambda E_\lambda) \\
&= \supp_R E
\end{align*}
where the first equality is standard, the third equality follows from \cite{ivo_greg:graded}*{Theorem 5.1} which applies as $R_\lambda$ is noetherian, the fourth equality is \cite{balmer:prime}*{Proposition 3.6}, the final equality holds by the definition of $E_\lambda$, and the other two follow from commutativity of the diagram of comparison maps.
\end{proof}

\begin{lemma}\label{lem:graded_basis}
The collection of subsets
\begin{displaymath}
\mathcal{B} = \{\supp_R (\cone(r)) \; \vert \; r \in \Mor \mathcal{R}\}
\end{displaymath}
is a basis for the Zariski topology on $\Spc \D^\mathrm{perf}(R)$. Thus $\rho$ is a homeomorphism onto its image.
\end{lemma}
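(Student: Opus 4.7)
The strategy is to reduce to the noetherian subrings $R_\lambda$ where the classification theorem of \cite{ivo_greg:graded} directly identifies the spectra with $\specgr R_\lambda$, and then pull back to $R$ using naturality. Recall that by Balmer's construction, the collection $\{\supp_R(E) \mid E \in \D^\mathrm{perf}(R)\}$ already forms a basis of closed subsets for the Zariski topology on $\Spc \D^\mathrm{perf}(R)$. Hence it suffices to show that every such $\supp_R(E)$ can be expressed as an intersection of sets from $\mathcal{B}$. Once this is established, Proposition \ref{prop:injectivity} immediately yields that $\rho$ is injective and a homeomorphism onto its image.

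First, fix $E \in \D^\mathrm{perf}(R)$ and, as in the proof of Lemma \ref{lem:supports_agree}, choose some $\lambda$ and $E_\lambda \in \D^\mathrm{perf}(R_\lambda)$ with $E \cong \LL i^*_\lambda E_\lambda$. By \cite{balmer:prime}*{Proposition 3.6}, we have the identification $\supp_R(E) = (\Spc \LL i^*_\lambda)^{-1}\supp_{R_\lambda}(E_\lambda)$. Now $R_\lambda$ is noetherian, so Lemma \ref{lem:noeth_homeo} (which is essentially \cite{ivo_greg:graded}*{Theorem 5.1}) shows that $\rho_\lambda$ provides a support-preserving homeomorphism between $\Spc \D^\mathrm{perf}(R_\lambda)$ and $\specgr R_\lambda \cong \Spec \mathcal{R}_\lambda$. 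Under this homeomorphism, the ordinary closed basis $\{V(a) \mid a \in R_\lambda \text{ homogeneous}\}$ of $\specgr R_\lambda$ transfers to the collection $\{\supp_{R_\lambda}(\cone(r)) \mid r \in \Mor \mathcal{R}_\lambda\}$, since every homogeneous element of $R_\lambda$ corresponds, up to appropriate twists, to a morphism in $\mathcal{R}_\lambda$ whose cone has homological support $V(a)$. Consequently there is a family $\{r_i\}_{i\in I}$ of morphisms in $\mathcal{R}_\lambda$ with
\[
\supp_{R_\lambda}(E_\lambda) = \bigcap_{i\in I} \supp_{R_\lambda}(\cone(r_i)).
\]

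Finally, pulling back via the spectral map $\Spc \LL i^*_\lambda$ and invoking \cite{balmer:prime}*{Proposition 3.6} once more, together with the fact that the tensor-exact functor $\LL i^*_\lambda$ commutes with cones and that $\LL i^*_\lambda \mathcal{R}_\lambda \subseteq \mathcal{R}$, we obtain
\[
\supp_R(E) = \bigcap_{i\in I} \supp_R\bigl(\cone(\LL i^*_\lambda r_i)\bigr),
\]
which realises $\supp_R(E)$ as an intersection of elements of $\mathcal{B}$, as required. The main technical point in this plan is the reduction from $R$ to the noetherian $R_\lambda$: all the needed compatibilities (writing an arbitrary perfect complex as a pullback, functoriality of $\rho$ and $\supp$, and that morphisms in $\mathcal{R}_\lambda$ map into $\mathcal{R}$) have already been arranged in the lemmas preceding this statement, so the argument is essentially a matter of chaining them together.
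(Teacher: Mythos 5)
Your proof is correct, but it takes a slightly different and more roundabout route than the paper's. The paper proceeds pointwise: given $E$ and a prime $\mathcal{P}\notin \supp_R E$, it uses the fact that the sets $\supph_R\cone(s)$ form a basis of closed subsets of $\Spec\mathcal{R}$ to find $r$ with $\supph_R E\subseteq \supph_R\cone(r)$ and $\rho(\mathcal{P})\notin \supph_R\cone(r)$, then pulls back via $\rho^{-1}$ using Lemma~\ref{lem:supports_agree} as a black box. Your argument instead shows directly that each $\supp_R E$ is an intersection of sets from $\mathcal{B}$, which is of course equivalent, but you achieve it by redoing the reduction to a noetherian $R_\lambda$ and the pullback along $\Spc\LL i^*_\lambda$ --- precisely the content already packaged in Lemma~\ref{lem:supports_agree}. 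Since $\rho^{-1}$ (as a preimage operation) commutes with intersections, you could have shortened your argument considerably by writing $\supph_R E = \bigcap_i \supph_R\cone(r_i)$ in $\Spec\mathcal{R}$ (the $\supph_R\cone(r_i)$ being the $V(\langle r_i\rangle)$, which are a basis of closed subsets by Lemma~\ref{lemma:spectop}) and then applying $\rho^{-1}$ together with Lemma~\ref{lem:supports_agree} to get $\supp_R E = \bigcap_i \supp_R\cone(r_i)$, with no further reference to the directed system $\{R_\lambda\}$. Your version is not wrong --- all the steps (the representability $E\cong\LL i^*_\lambda E_\lambda$, Balmer's naturality of supports, the transfer via $\rho_\lambda$ in the noetherian case, and $\LL i^*_\lambda\mathcal{R}_\lambda\subseteq\mathcal{R}$) are sound --- but it duplicates work that the preceding lemma was designed to encapsulate.
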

\begin{proof}
Let $E$ be an object of $\D^\mathrm{perf}(R)$ and let $\mathcal{P}$ be a prime ideal not in $\supp_R E$. We need to show that there exists a map $r$ in $\mathcal{R}$ such that $\supp_R E \subseteq \supp_R \cone(r)$ and $\mathcal{P}\notin \supp_R \cone(r)$. Since the subsets $\supph_R \cone(s)$ as $s$ varies over the morphisms in $\mathcal{R}$ form a basis for the Zariski topology on $\Spec \mathcal{R}$ we can find an~$r$ such that $\supph_R E \subseteq \supph_R \cone(r)$ and $\rho(\mathcal{P})\notin \supph_R \cone(r)$. Applying $\rho^{-1}$ and using the last lemma shows that $\supp_R \cone(r)$ is the desired subset.

The result then follows from Proposition \ref{prop:injectivity}.
\end{proof}

Let us now show that $\rho$ is also surjective. In fact we will show the equivalent statement that the composite
\begin{displaymath}
\rho'\colon \Spc \D^\mathrm{perf}(R) \to \Spec \mathcal{R} \stackrel{\sim}{\to} \specgr R
\end{displaymath}
is surjective.

\begin{lemma}
The comparison map $\rho'$ is surjective.
\end{lemma}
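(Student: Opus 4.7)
The plan is to exhibit, for any $\mathfrak p \in \specgr R$, an explicit prime $\mathcal P \in \Spc \D^\mathrm{perf}(R)$ mapping to $\mathfrak p$ under $\rho'$. The obvious candidate, matching the noetherian case (Lemma \ref{lem:noeth_homeo}) and tailored to exploit Lemma \ref{lem:supports_agree}, is
\[
\mathcal P := \{ E \in \D^\mathrm{perf}(R) \mid \mathfrak p \notin \supph_R E \}.
\]

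First I would verify that $\mathcal P$ is a prime thick tensor ideal. Thickness comes from the standard facts that $\supph_R$ vanishes on zero, is shift-invariant, satisfies the triangle inequality $\supph_R Y \subseteq \supph_R X \cup \supph_R Z$ on triangles, and is monotone on summands; the ideal property follows from $\supph_R(X \otimes^{\mathbf L}_R Y) \subseteq \supph_R X$; and primality reduces to the tensor-product identity $\supph_R(X \otimes^{\mathbf L}_R Y) = \supph_R X \cap \supph_R Y$. The latter follows by detecting $\supph_R$ prime-by-prime via derived base change to the graded residue field $k(\mathfrak q)$, over which a derived tensor product of complexes of graded vector spaces vanishes iff one factor does. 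These are either established in \cite{ivo_greg:graded} or immediate from the definition of $\supph_R$. Properness of $\mathcal P$ is automatic from $\supph_R R = \specgr R \ni \mathfrak p$.

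Next I would compute $\rho'(\mathcal P)$. By definition, $\rho(\mathcal P) \subseteq \Mor \mathcal R$ consists of those morphisms $r$ with $\cone(r) \notin \mathcal P$, i.e., with $\mathfrak p \in \supph_R \cone(r)$. Under the canonical identifications $\Spec \mathcal R \simeq \Spec \mathcal C_R \simeq \specgr R$, the homogeneous ideal of $R$ corresponding to $\rho(\mathcal P)$ is therefore the set of homogeneous $\tilde r \in R$ (of some degree $d$) such that $\mathfrak p \in \supph_R \cone\bigl(R \xrightarrow{\tilde r} R(d)\bigr)$. A direct calculation of this two-term Koszul-type complex, carried out by base change to $k(\mathfrak q)$ for each $\mathfrak q$ (where $\tilde r$ acts either invertibly, killing the cone, or as zero, producing a non-zero cone), gives $\supph_R \cone(\tilde r) = V(\tilde r) = \{\mathfrak q \mid \tilde r \in \mathfrak q\}$. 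Hence $\rho'(\mathcal P) = \{ \tilde r \mid \tilde r \in \mathfrak p \} = \mathfrak p$, as required.

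The main obstacle, if any, is purely bookkeeping: one must confirm that the multiplicative formula for $\supph_R$ and the equality $\supph_R \cone(\tilde r) = V(\tilde r)$ remain valid in the non-noetherian, $G$-graded, $\epsilon$-commutative setting. Both follow from base change to the graded residue field, which in the appropriate sense is a graded field and therefore has the expected Künneth-type behaviour on perfect complexes; this is precisely the framework developed in \cite{ivo_greg:graded}.
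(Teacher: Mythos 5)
Your proof takes a genuinely different route from the paper's. The paper's strategy is a filtered--colimit argument: it writes $\mathfrak p$ as the colimit of the primes $\mathfrak p_\lambda=\mathfrak p\cap R_\lambda$ in the finitely generated (hence noetherian) graded subrings $R_\lambda$, lets $\mathcal P_\lambda\in\Spc\D^\perf(R_\lambda)$ be the unique preimage of $\mathfrak p_\lambda$ supplied by the already-established noetherian homeomorphism (Lemma~\ref{lem:noeth_homeo}), and defines $\mathcal P:=\bigcup_\Lambda\LL i_\lambda^*\mathcal P_\lambda$, verifying it is an increasing filtered union of prime ideals and hence prime. This is deliberately ``soft'': it avoids ever having to re-establish properties of $\supph_R$ in the non-noetherian setting, pushing all the graded commutative algebra into the noetherian classification theorem. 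Your proof instead constructs the candidate prime $\mathcal P=\{E\mid\mathfrak p\notin\supph_R E\}$ directly and then must verify, in the non-noetherian graded $\epsilon$-commutative setting, that $\supph_R$ is a genuine support datum --- most crucially the tensor formula $\supph_R(X\otimes^{\mathbf L}_R Y)=\supph_R X\cap\supph_R Y$ for perfect complexes --- together with $\supph_R\cone(\tilde r)=V(\tilde r)$. These do hold (base change to the graded residue field plus the Nakayama/minimal-complex argument for bounded complexes of f.g.\ projectives over a graded-local ring works without noetherian hypotheses), so your approach is sound; it is essentially the observation that $(\specgr R,\supph_R)$ is a support datum and your $\mathcal P$ is the image of $\mathfrak p$ under the map produced by Balmer's universal property. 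You are right to flag that the bookkeeping is where the work is: \cite{ivo_greg:graded} is stated for noetherian $R$, so those facts cannot simply be quoted; they would need a short independent verification, and it is exactly this verification that the paper's colimit trick is designed to avoid. The trade-off is: the paper's proof is longer but strictly minimizes the commutative algebra input, while yours is shorter and more conceptual but front-loads a re-derivation of the support axioms in the general setting.
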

\begin{proof}
Let $\frp$ be a homogeneous prime ideal of $\specgr R$. Then, setting $\frp_\lambda = \frp~\cap~R_\lambda$, we can write $\frp$ as the filtered colimit $\frp = \colim_\Lambda \frp_\lambda$. As each $\frp_\lambda$ is prime in $R_\lambda$ we can find, by Lemma \ref{lem:noeth_homeo}, a unique $\mathcal{P}_\lambda \in \Spc \D^\mathrm{perf}(R)$ such that $\rho'_\lambda (\mathcal{P}_\lambda) = \frp_\lambda$.

We define a full subcategory $\mathcal{P} := \bigcup_\Lambda \LL i^*_\lambda \mathcal{P}_\lambda$ of $\D^\mathrm{perf}(R)$. We claim that $\mathcal{P}$ is a prime $\otimes$-ideal. In order to see this first note that if $R_{\lambda_1} \subseteq R_{\lambda_2}$ then, since $\mathfrak{p}_{\lambda_1} = R_{\lambda_1}\cap \mathfrak{p}_{\lambda_2}$, if $r\in R_{\lambda_1}$ is not in $\mathfrak{p}_{\lambda_1}$ it is not in $\mathfrak{p}_{\lambda_2}$. Thus, letting $j\colon R_{\lambda_1} \to R_{\lambda_2}$ denote the inclusion, this observation together with the fact that the derived pullback sends cones to cones yields
\begin{displaymath}
\LL j^*\mathcal{P}_{\lambda_1} \subseteq \langle \LL j^* \cone(r)\; \vert \; r\in (R_{\lambda_1}\smallsetminus \mathfrak{p}_{\lambda_1})\rangle_\otimes \subseteq \mathcal{P}_{\lambda_2}. 
\end{displaymath}
So $\mathcal{P}$ is an increasing filtered union and is thus a prime $\otimes$-ideal as any perfect complex over $R$ can be obtained from a perfect complex over some $R_\lambda$ via $\LL i^*_\lambda$.

We now show that $\rho' (\mathcal{P}) = \frp$. Let $r$ be a homogeneous element of $R$ and let $\lambda \in \Lambda$ be such that $r\in R_\lambda$. Then~$r$ lies in $\frp$ if and only if $r$ lies in $\frp_\lambda$,
 if and only if $\cone_{R_\lambda}(r)$ is not in $\mathcal{P}_\lambda$,
  if and only if $\LL i^*_\lambda \cone_{R_\lambda}(r) \cong \cone_R(r)$ is not in~$\mathcal{P}$. 
  Thus $\rho' (\mathcal{P}) = \frp$ and we see that $\rho'$ is surjective as claimed.
\end{proof}

Combining the previous lemmas, we obtain the following theorem. 

\begin{thm}
\label{thm:grcommrings}
Let $R$ be a $G$-graded $\epsilon$-commutative ring for some abelian group $G$ and some bilinear form $\epsilon\colon G\times G\to \Z/2$. 
Then there is a (unique) homeomorphism
\begin{displaymath}
\Spc \D^\mathrm{perf}(R) \cong \Spec \mathcal{R} \cong \specgr R
\end{displaymath}
which identifies the support in the sense of Balmer with the usual homological support.
\end{thm}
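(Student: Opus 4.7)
The proof will be essentially a synthesis of the lemmas preceding the theorem, so the plan is primarily to assemble these pieces and spell out how they imply the final statement. First I would recall the identification of the second homeomorphism: the equivalence of graded commutative 2-rings $\mathcal{C}_R \simeq \mathcal{R}$ (via the Yoneda-type embedding) and the preceding lemma identifying $\specgr R$ with $\Spec \mathcal{C}_R$ together yield a canonical homeomorphism $\Spec \mathcal{R} \cong \specgr R$. Composing with the comparison map $\rho$ of Theorem \ref{thm:rho} gives the candidate map $\rho'\colon \Spc \D^{\mathrm{perf}}(R) \to \specgr R$, and it suffices to prove this map is a support-preserving homeomorphism.

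Next I would invoke Lemma \ref{lem:graded_basis}, which asserts that $\rho$ is a homeomorphism onto its image: this establishes both injectivity and the fact that the topology on $\Spc \D^{\mathrm{perf}}(R)$ is identified with the subspace topology inherited from $\Spec \mathcal{R}$. Combined with the surjectivity of $\rho'$ established in the immediately preceding lemma (via the colimit construction of a prime $\otimes$-ideal $\mathcal{P} = \bigcup_\Lambda \LL i_\lambda^* \mathcal{P}_\lambda$ from the noetherian subrings), one concludes that $\rho$ (and therefore $\rho'$) is a continuous bijection which is open onto its image and whose image is everything — in other words, a genuine homeomorphism.

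Finally, for the support statement, I would appeal directly to Lemma \ref{lem:supports_agree}, which states $\rho^{-1}\supph_R(E) = \supp_R(E)$ for every perfect complex $E$. Since $\rho$ is now known to be a homeomorphism, this rephrases as $\rho(\supp_R E) = \supph_R E$, so the homeomorphism identifies Balmer's universal support $\supp_R$ on the triangular spectrum side with the homological support $\supph_R$ on the Zariski spectrum side, as required. Uniqueness of the homeomorphism is automatic from the fact that it is specified by the functorial comparison map $\rho$.

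I do not expect any genuine obstacle here, since all the work has been front-loaded into the lemmas: the main content is the injectivity criterion (Proposition \ref{prop:injectivity}) applied through the reduction to noetherian subrings \`a la Thomason, plus the colimit argument producing preimages of primes. The only minor care needed in the write-up is the bookkeeping between the three equivalent descriptions of the target space ($\Spec \mathcal{R}$, $\Spec \mathcal{C}_R$, and $\specgr R$), and checking that the support identifications are compatible across these identifications, which is immediate from the constructions.
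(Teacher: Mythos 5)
Your proposal is correct and follows essentially the same route as the paper, whose proof of this theorem is literally the single sentence ``Combining the previous lemmas, we obtain the following theorem,'' referring to the same chain of results (the companion-category identification, Lemma \ref{lem:graded_basis} for injectivity and the subspace topology, the surjectivity lemma via the colimit of primes, and Lemma \ref{lem:supports_agree} for the support statement). Your brief remark on uniqueness is a reasonable gloss: a support-preserving homeomorphism between these $T_0$ spaces is determined by the basis of supports, so no extra work is required.
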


\subsection{Schemes with an ample family of line bundles}
Throughout this section all schemes $X$ are quasi-compact and quasi-separated. This is the necessary and sufficient hypothesis for Balmer's reconstruction theorem $X\cong \Spc \D^\perf(X)$ to apply (see \cite{balmer:icm}*{Theorem 54}).  
 We will show that every ample family of line bundles on $X$ gives rise to an injective comparison map from the spectrum of $\D^\mathrm{perf}(X)$ to that of a symmetric 2-ring associated with the family. Let us begin by recalling what it means for a collection of line bundles on  $X$ to be ample.

\begin{defi}
Let $\{\mathcal{L}_{\lambda}\}_{\lambda\in \Lambda}$ be a non-empty collection of line bundles on $X$. We say that  $\{\mathcal{L}_{\lambda}\}_{\lambda\in \Lambda}$ is an \emph{ample family of line bundles} if there is a family of sections $f\in H^0(X,\mathcal{L}_\lambda^n)$ for $\lambda \in \Lambda$ and $n\geq 0$ such that the open sets
\begin{displaymath}
X_f = \{x\in X \; \vert \; f_x\notin \mathfrak{m}_x(\mathcal{L}_\lambda^n)_x\}
\end{displaymath}
form a basis for~$X$. Here of course $\mathcal L^n_\lambda$ denotes the $n$th tensor power $\mathcal (\mathcal L_\lambda)^{\otimes n}$.
\end{defi}

Given a section $f$ of a line bundle $\mathcal{L}$ on $X$ we shall denote by $Z(f)$ the closed complement of $X_f$.

\begin{lemma}\label{lemma:bundlesupport}
Let $X$ be a scheme, $\mathcal{L}$ a line bundle on $X$, and $f$ a section of $\mathcal{L}$. Then, via the homeomorphism $X\to \Spc \D^\mathrm{perf}(X)$, we have
\begin{displaymath}
\supp \cone(\mathcal{O}_X \stackrel{f}{\to} \mathcal{L}) = Z(f).
\end{displaymath}
\end{lemma}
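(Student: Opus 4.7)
The plan is to pass through Balmer's reconstruction homeomorphism $X \cong \Spc \D^\mathrm{perf}(X)$, under which a point $x \in X$ corresponds to the prime thick $\otimes$-ideal $\mathcal{P}_x := \{E \mid E_x \simeq 0 \text{ in } \D(\mathcal{O}_{X,x})\}$ cut out as the kernel of the stalk functor. Under this identification Balmer support becomes the classical stalk-wise support, so $\supp E$ corresponds to $\{x \in X \mid E_x \not\simeq 0\}$ as subsets of $X$. Hence it suffices to prove that $\cone(f)_x \simeq 0$ in $\D^\mathrm{perf}(\mathcal{O}_{X,x})$ if and only if $x \in X_f$.

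Since taking the stalk at $x$ is a tensor-exact functor it commutes with cones, giving $\cone(f)_x \simeq \cone(f_x \colon \mathcal{O}_{X,x} \to \mathcal{L}_x)$. The stalk $\mathcal{L}_x$ is free of rank one over the local ring $\mathcal{O}_{X,x}$, so after choosing a generator the morphism $f_x$ is multiplication by an element $a \in \mathcal{O}_{X,x}$; the cone vanishes precisely when $a$ is a unit, equivalently when $f_x \notin \mathfrak{m}_x \mathcal{L}_x$, which is exactly the defining condition for $x \in X_f$. Taking complements then yields $\supp \cone(f) = X \smallsetminus X_f = Z(f)$, as claimed.

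No serious obstacle is anticipated: the entire argument reduces to the standard stalk-wise description of Balmer support provided by the reconstruction theorem, together with a one-line local computation with a rank-one free module over a local ring. The only bookkeeping points are to note that $\cone(f)$ really lies in $\D^\mathrm{perf}(X)$ (clear, since it is the cone of a morphism between line bundles) and to be careful about the direction of the reconstruction homeomorphism.
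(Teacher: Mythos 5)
Your argument is correct and follows the same route as the paper, which simply invokes the identification of Balmer support with homological (stalk-wise) support under the reconstruction homeomorphism $X\cong \Spc \D^\mathrm{perf}(X)$; you have merely unpacked the local computation that the paper treats as immediate. The only cosmetic slip is writing $\D(\mathcal{O}_{X,x})$ in one place and $\D^\mathrm{perf}(\mathcal{O}_{X,x})$ in another, which is harmless since the stalk of a perfect complex is perfect and vanishing is detected in either.
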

\begin{proof}
This is essentially immediate as the homeomorphism $X\cong \Spc \D^\mathrm{perf}(X)$ of \cite{balmer:icm}*{Theorem 54} identifies the support in the sense of Balmer with the homological support.
\end{proof}

\begin{defi}
Let $X$ be a scheme and suppose that we are given some non-empty collection $\underline{\mathcal{L}} = \{\mathcal{L}_{\lambda}\}_{\lambda\in \Lambda}$ of line bundles on $X$. We denote by $\mathcal{R}(\underline{\mathcal{L}})$ the associated central 2-ring of $\D^\mathrm{perf}(X)$, i.e., the replete closure  of the full subcategory of $\D^\mathrm{perf}(X)$ whose objects are 
\begin{displaymath}
\{\mathcal{L}_{\lambda_1}^{m_1} \otimes \cdots \otimes \mathcal{L}_{\lambda_n}^{m_n}\; \vert \; n\geq 1, \lambda_i\in \Lambda, m_i\in \mathbb{Z}\}.
\end{displaymath}
\end{defi}

\begin{thm}
\label{thm:ample}
Let $X$ be a quasi-compact and quasi-separated scheme with an ample family of line bundles $\underline{\mathcal{L}} = \{\mathcal{L}_\lambda\}_{\lambda \in \Lambda}$. Then the comparison map 
\[
\rho \colon \Spc \D^\mathrm{perf}(X) \to \Spec \mathcal{R}(\underline{\mathcal{L}})
\]
is a homeomorphism onto its image. In particular, there is an injective morphism $\rho_X^{\underline{\mathcal L}} \colon X\to \Spec\mathcal{R}(\underline{\mathcal{L}})$ and $X$ has the subspace topology relative to this injection.
\end{thm}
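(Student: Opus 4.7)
The plan is to apply the injectivity criterion of Proposition \ref{prop:injectivity} directly. Concretely, via Balmer's reconstruction homeomorphism $X \cong \Spc \D^{\perf}(X)$, we must verify that the collection
\[
\mathcal{B} = \{\supp(\cone(r)) \mid r\in \Mor \mathcal{R}(\underline{\mathcal{L}})\}
\]
gives a basis of closed subsets for the topology of $X$. Once this is done, Proposition \ref{prop:injectivity} immediately yields that $\rho_X^{\underline{\mathcal{L}}}$ is injective and a homeomorphism onto its image, which is precisely the content of the theorem (the subspace topology assertion then follows automatically from the definition of ``homeomorphism onto its image'').

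The first step is to translate the ampleness hypothesis into the language we need. By definition of an ample family, the open subsets $X_f$, as $f$ ranges over global sections in $H^0(X, \mathcal{L}_\lambda^n)$ for various $\lambda \in \Lambda$ and $n\geq 0$, form a basis for the topology of $X$; equivalently, their closed complements $Z(f)$ form a basis of closed subsets. Next I would note that such a section $f \in H^0(X, \mathcal{L}_\lambda^n)$ is the same datum as a morphism $f\colon \mathcal{O}_X \to \mathcal{L}_\lambda^n$ in $\D^{\perf}(X)$, and since both $\mathcal{O}_X$ (the tensor unit) and $\mathcal{L}_\lambda^n$ are objects of $\mathcal{R}(\underline{\mathcal{L}})$, this $f$ is in particular a morphism of $\mathcal{R}(\underline{\mathcal{L}})$.

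The key input now is Lemma \ref{lemma:bundlesupport}, which identifies $\supp \cone(f) = Z(f)$ for every such section $f$. Combining the two preceding observations we conclude that $\mathcal{B}$ contains the basis $\{Z(f)\}$ of closed subsets of $X$, hence is itself a basis of closed subsets (or at the very least contains one, which is all that Proposition \ref{prop:injectivity} requires). Proposition \ref{prop:injectivity} then gives that $\rho\colon \Spc\D^{\perf}(X) \to \Spec \mathcal{R}(\underline{\mathcal{L}})$ is injective and a homeomorphism onto its image. Composing with Balmer's reconstruction $X \stackrel{\sim}{\to} \Spc \D^{\perf}(X)$ produces the claimed injection $\rho_X^{\underline{\mathcal{L}}}\colon X \hookrightarrow \Spec \mathcal{R}(\underline{\mathcal{L}})$ realising $X$ as a subspace.

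There is essentially no hard step here: all of the technical weight has already been carried by Proposition \ref{prop:injectivity}, by the identification $\supp\cone(f)=Z(f)$, and by the reconstruction theorem. The only thing worth being careful about is that the ampleness condition applies to sections of \emph{powers} $\mathcal{L}_\lambda^n$ (which is harmless since all such powers lie in $\mathcal{R}(\underline{\mathcal{L}})$ by its very definition), and that we really do have $\Hom_{\D^{\perf}(X)}(\mathcal{O}_X,\mathcal{L}_\lambda^n) = H^0(X,\mathcal{L}_\lambda^n)$, which holds because line bundles are sheaves concentrated in degree zero.
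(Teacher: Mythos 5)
Your proof is correct and follows exactly the same route as the paper: reduce to checking that $\{\supp\cone(r)\mid r\in\Mor\mathcal{R}(\underline{\mathcal{L}})\}$ forms a basis of closed subsets via Proposition \ref{prop:injectivity}, then observe that Lemma \ref{lemma:bundlesupport} identifies $\supp\cone(f)$ with $Z(f)$ for each global section $f$ of $\mathcal{L}_\lambda^n$, and invoke ampleness to conclude these $Z(f)$ already give a basis. The only small thing worth noting is that it suffices for $\mathcal{B}$ to contain a basis, since every set in $\mathcal{B}$ is closed, so $\mathcal{B}$ is then itself a basis — which is what Proposition \ref{prop:injectivity} literally requires.
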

\begin{proof}
By Proposition \ref{prop:injectivity} it is enough to check that the supports of cones on morphisms in $\mathcal{R}(\underline{\mathcal{L}})$ give a basis of closed subsets for the Zariski topology on $\Spc \D^\mathrm{perf}(X) \cong X$. From Lemma \ref{lemma:bundlesupport} we know that, by taking the support of the cone of the map associated to a global section $f$ of $\mathcal{L}_\lambda^n$, we obtain the subset corresponding to $Z(f)$. Since the family of line bundles is ample these subsets form a basis of closed subsets and so we are done.
\end{proof}

\begin{remark}\label{rem:ffs}
It is natural to compare this result with work of Brenner and Schr\"{o}er \cite{brenner-schroer}. They define a notion of $\Proj$ for multihomogeneous rings, and in their Theorem~4.4 characterise ampleness of a family of line bundles $\mathcal{L}_1,\ldots,\mathcal{L}_n$ on a quasi-compact and quasi-separated scheme $X$ in terms of the canonical rational map
\begin{displaymath}
\xymatrix{
X \ar@{-->}[r] & \Proj \Gamma(X, \bigoplus_{d\in \mathbb{N}^n} \mathcal{L}_1^{d_1} \otimes \cdots \otimes \mathcal{L}_n^{d_n}).
}
\end{displaymath}
In the case that $X$ is divisorial one can verify that, identifying $\Spc \D^\mathrm{perf}(X)$ with $X$, the map of Theorem~\ref{thm:ample} can be used to recover the morphism of Brenner and Schr\"{o}er at the level of topological spaces.
\end{remark}

\begin{example}[Projective schemes]
\label{ex:projective}
Consider $X= \Proj(R)$, where $R$ is a commutative non-negatively $\Z$-graded $k$-algebra over a field~$k$, such that $R_0=k$ and such that $R_1$ generates $R$ over~$k$. 
Then $\mathcal L := \mathcal O(1)$ is an ample line bundle, so Theorem \ref{thm:ample} yields a topological embedding $\rho^{\mathcal L}_X\colon X\hookrightarrow \spec \mathcal R(\mathcal L)$.
Now it is not hard to see that we have an isomorphism $\spec \mathcal R(\mathcal L)\cong \specgr R$ which identifies $\rho^{\mathcal L}_X$ with the defining embedding of $X$ into $\specgr R$ (and therefore also with Balmer's graded comparison map $\rho^{*,\mathcal O(1)}\colon X=\Spc \D^\perf(X)\to \specgr \End^{*,\mathcal O(1)}(\unit)$, \cf\ \cite{balmer:spec3}*{Remark 8.2}).

Since $\specgr R$ consists of $X$ plus a unique closed point, this example shows that, in general, the map $\rho^{\underline{\mathcal L}}_X$ need not be surjective, nor must it have closed image.
\end{example}

\begin{example}[Affine schemes with torsion Picard group]
Let $R$ be a ring such that the Picard group, $\Pic(R)$, is torsion. Denote by $\mathcal{R}\subseteq \D^\mathrm{perf}(R)$ the central 2-ring consisting of all line bundles in degree zero. In this case, as in the case where one considers Balmer's $\rho$, the map $\rho_{\D^\mathrm{perf}(R)}^\mathcal{R}$ is a homeomorphism. Let us sketch the argument.

Consider the composite
\begin{displaymath}
f:=(\Spec R \stackrel{\sim}{\to} \Spc \D^\mathrm{perf}(R) \to \Spec \mathcal{R})
\end{displaymath}
which, unwinding the definitions, sends $\mathfrak{p}\in \Spec R$ to the prime ideal $\{s\in \mathcal{R} \; \vert \; \mathfrak{p} \in \supph \cone(s)\}$. There is also a map $g\colon \Spec \mathcal{R} \to \Spec R$ sending a prime ideal $P$ to $P(R,R)$. It is clear that $gf = \id_{\Spec R}$.

Now suppose that $P\in \Spec \mathcal{R}$ and consider $fg(P)$. An element $s\in \mathcal{R}(\mathcal{L},\mathcal{M})$ is in a given ideal if and only if the translate $\tilde{s}\colon R\to \mathcal{L}^{-1}\otimes \mathcal{M}$ is. Let $n$ be the order of $\mathcal{L}^{-1}\otimes \mathcal{M}$ and consider a composite
\begin{displaymath}
t:=(R \stackrel{\sim}{\to} R^{\otimes n} \stackrel{\tilde{s}^{\otimes n}}{\to} (\mathcal{L}^{-1}\otimes \mathcal{M})^{\otimes n} \stackrel{\sim}{\to} R).
\end{displaymath}
Regardless of the choice of isomorphisms we note that $t$ lies in a given prime ideal if and only if $\tilde{s}$ does and hence if and only if $s$ does. The final observation is that the homological supports of the cones on $s$, $\tilde{s}$, $\tilde{s}^{\otimes n}$, and $t$ agree. Thus $s\in fg(P)$ iff $P(R,R)\in \supph\cone(s)$ iff $P(R,R)\in \supph\cone(t)$ iff $t\in P(R,R)$ iff $s\in P$. Hence $fg$ is the identity on $\Spec \mathcal{R}$ so we have the claimed homeomorphism.
\end{example}

\begin{bibdiv}
  \begin{biblist}

    \bibselect{literature}

  \end{biblist}
\end{bibdiv}

\end{document}